\documentclass[11pt, one sided]{amsart}
\RequirePackage[colorlinks,citecolor=blue,urlcolor=blue]{hyperref}

\usepackage{latexsym,amssymb,amsmath,amsfonts,amsthm}
\usepackage{amsthm,amsmath}
\usepackage{float}
\usepackage{enumerate}
\usepackage{comment}
\usepackage[margin=3cm]{geometry}
\usepackage{bbm}
\usepackage{subfigure}
\usepackage{graphicx}
\usepackage[toc,page]{appendix}
\usepackage{multirow}
\usepackage{amsfonts}
\usepackage[all]{xy}
\usepackage{caption}
 \usepackage{geometry}                
\usepackage{graphicx}
\usepackage{amssymb}
\usepackage{epstopdf}
\usepackage{color}
\usepackage{bbm, dsfont}

\usepackage{mathabx} 

\usepackage{hyperref}
\usepackage[utf8]{inputenc}
\usepackage{amssymb,amsthm,amsmath,amssymb,wrapfig,dsfont}
\usepackage[dvipsnames]{xcolor}
\definecolor{myred}{RGB}{251,154,133}
\definecolor{myblue}{RGB}{153,206,227}
\definecolor{mylightblue}{RGB}{0, 150, 255}
\definecolor{mygreen}{RGB}{32, 210, 64}
\definecolor{mygray}{RGB}{220, 220, 220}

\usepackage{tikz}
\usetikzlibrary{decorations.pathmorphing}
\tikzset{snake it/.style={decorate, decoration=snake}}
\usetikzlibrary{shapes.geometric,positioning,decorations.pathreplacing} 
\usepackage{pgfplots}

\newtheorem{theorem}{Theorem}
\newtheorem{definition}{Definition}[section]
\newtheorem{lemma}{Lemma}
\newtheorem{remark}{Remark}[section]
\newtheorem{proposition}{Proposition}[section]
\newtheorem{corollary}{Corollary}[section]

\newtheorem{open_problem}{Open Problem}

\def\beq{ \begin{equation} }
\def\eeq{ \end{equation} }

\def\ep{\varepsilon}

\def\square{\vcenter{\vbox{\hrule height .4pt
  \hbox{\vrule width .4pt height 5pt \kern 5pt
        \vrule width .4pt} \hrule height .4pt}}}

\newcommand{\BE}{{\mathbb{E}}}

\newcommand{\BN}{{\mathbb{N}}}

\newcommand{\BP}{{\mathbb{P}}}

\newcommand{\BR}{{\mathbb{R}}}

\newcommand{\BZ}{{\mathbb{Z}}}

\newcommand{\ind}{{\mathbbm{1}}}

\newcommand{\prob}{{\bf P}}

\newcommand{\bae}{\begin{equation}\begin{aligned}}
\newcommand{\eae}{\end{aligned}\end{equation}}

\newcommand{\Z}{\mathbb{Z}}

\DeclareFontFamily{OML}{rsfs}{\skewchar\font'177}
\DeclareFontShape{OML}{rsfs}{m}{n}{ <5> <6> rsfs5 <7> <8> <9>
rsfs7 <10> <10.95> <12> <14.4> <17.28> <20.74> <24.88> rsfs10 }{}
\DeclareMathAlphabet{\mathfs}{OML}{rsfs}{m}{n}

\pgfplotsset{compat=1.18}
\newcommand{\abs}[1]{\left| #1 \right|}
\newcommand{\rb}[1]{\left( #1 \right)}
\newcommand{\bb}[1]{\left[ #1 \right]}
\newcommand{\cb}[1]{\left\{ #1 \right\}}

\newcommand{\E}[2][]{\mathbb{E}_{#1}\left[#2\right]} 
\newcommand{\Prob}[2][]{\mathbb{P}_{#1}\left(#2\right)}
\newcommand{\indicator}[1]{\mathbbm{1}_{\left\{#1\right\}}}

\begin{document}

\title{Cluster-Cluster model in $\mathbb{Z}^d$}

\author{Noam Berger}
\address[Noam Berger]{Department of mathematics, TUM}
\urladdr{https://www.math.cit.tum.de/en/probability/people/berger/}
\email{noam.berger@tum.de }

\author{Eviatar B. Procaccia}
\address[Eviatar B. Procaccia]{Faculty of data and decision sciences, Technion}
\urladdr{https://procaccia.net.technion.ac.il}
\email{eviatarp@technion.ac.il}

\author{Dominik Schmid}
\address[Dominik Schmid]{Department of mathematics, University of Augsburg}
\urladdr{https://sites.google.com/view/dominik-schmid}
\email{d.schmid@uni-a.de}

\author{Daniel Sharon}
\address[Daniel Sharon]{Faculty of data and decision sciences, Technion}
\email{danielsharon@campus.technion.ac.il}

\begin{abstract}
We consider a stochastic process on $\mathds{Z}^d$ for $d \geq 1$. Given a translation invariant and ergodic starting configuration of finite clusters,  
each cluster $\mathfs{C}$ performs a continuous time simple random walk with rate $\abs{\mathfs{C}}^{-\alpha}$. 
If it attempts to move to a vertex occupied by another cluster, it does not move, and instead the two clusters connect via a new edge.
In all dimensions, we show that if $\alpha\ge 0$, there is almost surely no spontaneous creation of an infinite cluster within finite time. Moreover, for any $\alpha\le-1-2/d$ there is a finite-time blowup almost surely. In the regime $\alpha\in(-1,0)$ we show that the behavior greatly depends on the initial configuration.
In addition, in dimension one, we establish the exact phase diagram.  
\end{abstract}

\addtocontents{toc}{\protect\setcounter{tocdepth}{1}}

\maketitle
\tableofcontents

\section{Introduction}

\subsection{Preface}
The Cluster-Cluster model was introduced by Meakin et al. in 1984 \cite{meakin1984diffusion} and studied in the physics literature \cite{meakin1984effects, meakin1985dynamic,rajesh2024exact}. The model was introduced as a more realistic model than Diffusion Limited Aggregation (DLA) for colloidal systems exhibiting flocculation, such as blood coagulation \cite{leyvraz2003scaling}, cloud formation \cite{lushnikov2006gelation}, aerosol dynamics \cite{lushnikov1978coagulation} and protein aggregation \cite{frieden2007protein}.
Intuitively, for the Cluster-Cluster model on $\Z^d$ with $d \geq 1$ and parameter $\alpha \in \BR$, we consider a subset of the vertices at time $0$. Each vertex is equipped with a rate $1$ Poisson clock. 
Whenever the clock rings, choose one of the $2d$-many unit vectors uniformly at random, and try to move the particle in this direction. The move is performed if and only if the target is vacant. 
Otherwise, we create an edge between the particle and the targeted  vertex, and we say that the respective vertices form a cluster. In general, each cluster $\mathcal{C}$ receives an independent rate $|\mathcal{C}|^{-\alpha}$ Poisson clock. Whenever the clock rings, we attempt to move the entire cluster by one of the $2d$-many unit vectors, chosen uniformly at random. The attempt is performed if and only if the target positions are vacant. Otherwise, we choose one of the sites blocking the move uniformly at random and create an edge between the cluster and the targeted  vertex. A visualization of a sample according to the Cluster-Cluster model in $d=2$ and $\alpha=1$ is given in Figure~\ref{fig:enter-label}, and an illustration of the transition rules in Figure~\ref{fig:all-configs}.

Recently, the Cluster-Cluster model in one dimension was investigated in \cite{berger2025onedimensionalclusterclustermodel}.
In this paper we provide a mathematical treatment of the Cluster-Cluster model in higher dimensions. We investigate fundamental properties, such as the formation of infinite clusters and the size of a typical cluster.

\subsection{Definition of the Cluster-Cluster model}
In order to define the Cluster-Cluster model (also called Cluster-Cluster Aggregation), we require some setup.
Denote by $\mathcal{E}(\BZ^d)$ the set of nearest neighbor edges of $\BZ^d$. 
We have the following definition of a cluster. 

\begin{definition} Let $G=(\mathfs{V},\mathfs{E})$ be a subgraph of $(\Z^d,\mathcal{E}(\Z^d))$ which is a forest. 
A graph $\mathfs{C}=(\mathfs{C}_V,\mathfs{C}_E) \subseteq \mathfs{V} \times \mathfs{E}$ is called a cluster in $G$, if it is a tree, and for any $w \in \mathfs{V} \setminus \mathfs{C}_V$ and $v \in \mathfs{C}_V$ with $\{ v,w \} \in \mathcal{E}(\Z^d)$, we have that $\{ v,w \} \notin \mathfs{E}$. In words, $\mathfs{C}$ is a maximal tree in the graph $G$.  We denote by $\partial \mathfs{C}$ the $\BZ^d$-edge boundary of the vertices in $\mathfs{C}$.
\end{definition}

\begin{figure}
    \centering
    \includegraphics[width=0.5\linewidth]{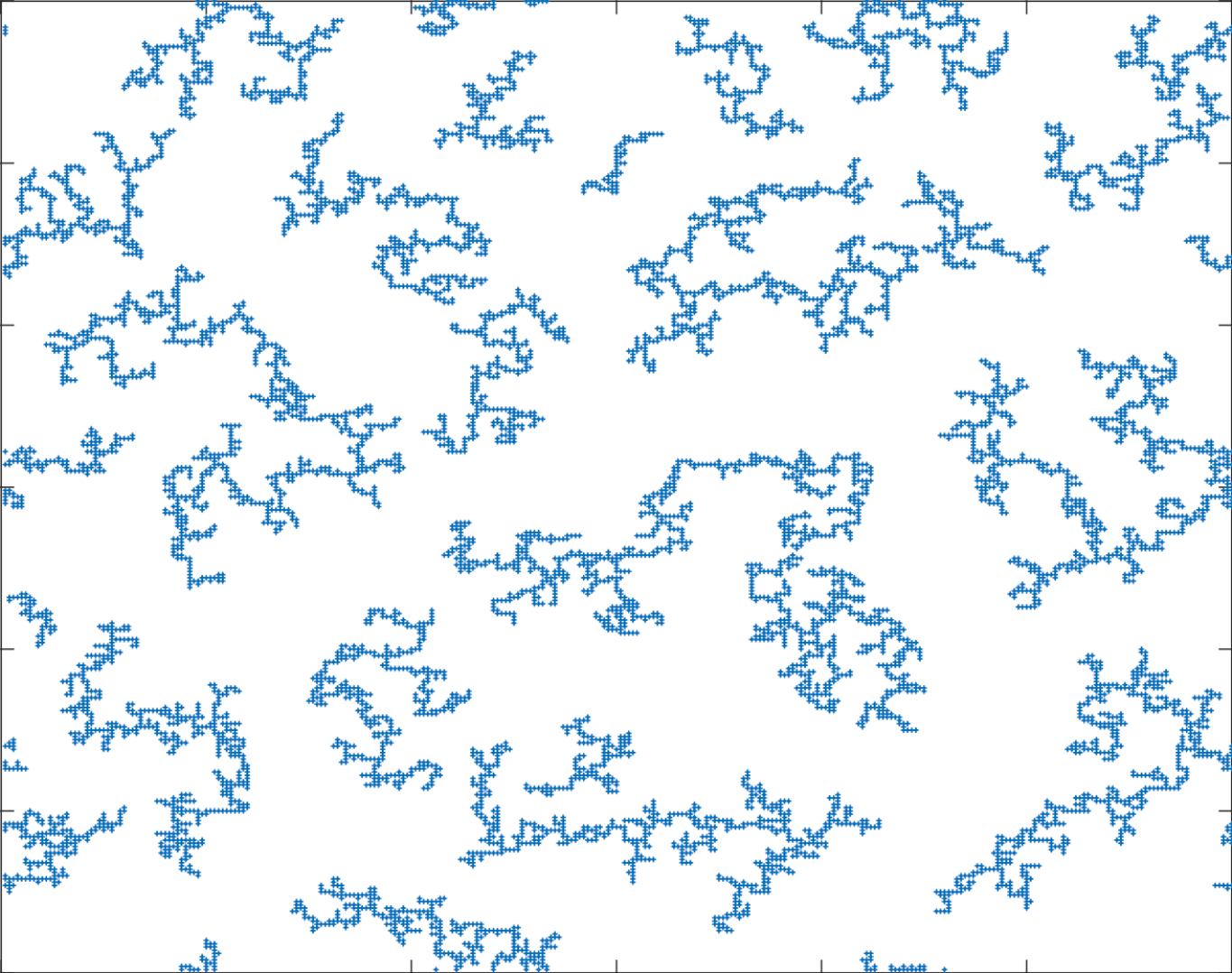}
    \caption{Cluster-Cluster model on a torus with $\alpha=1$.}
    \label{fig:enter-label}
\end{figure}

We define the Cluster-Cluster model as a stochastic process ${\rb{\mathfs{A}_t}}_{t\ge 0}$, taking values $\{0,1\}^{\BZ^d}\times \{0,1\}^{\mathcal{E}(\BZ^d)}$, where we set $${\rb{\mathfs{A}_t}}_{t\ge 0}=\rb{\mathfs{V}_t, \mathfs{E}_t}_{t\ge 0}. $$ 
We have the following formal construction. 

\begin{definition}\label{def:clustercluster} We define the Cluster-Cluster model
$\rb{\mathfs{A}_t^\alpha}_{t\ge 0}$ with parameter $\alpha \in \BR$  as follows. 
Suppose at time $0$, we are given a forest $\mathfs{A}_0^\alpha=(\mathfs{V}_0,\mathfs{E}_0)$, containing only finite trees a.s. 
For $t\geq 0$, every cluster $\mathfs{C}$ in the graph $\rb{\mathfs{V}_t,\mathfs{E}_t}$ receives a rate $|\mathfs{C}|^{-\alpha}$ Poisson clock.
When the clock of a cluster $\mathcal{C}$ rings at time $t$, the cluster attempts to perform a random walk move, i.e.~we consider the cluster $\mathcal{C}'=(\mathcal{C}_{V}',\mathcal{C}_{E}')$ with vertex set
\begin{equation*}
    \mathcal{C}_{V}' = \mathcal{C}_{V} + r := \{ v'\in  \BZ^d \colon  v'=v+r, v\in\mathcal{C}_{V}   \}
\end{equation*}
 and edge set 
\begin{equation*}
    \mathcal{C}_{E}' = \mathcal{C}_{E} + r := \{ e'\in  \mathcal{E}(\BZ^d) \colon  e'=e+r , e\in\mathcal{C}_{E} \} , 
\end{equation*} where $r$ is chosen uniformly at random from the unit vectors $(\pm\textup{e}_i)_{i \in 1,\dots,d}$ in $\Z^d$.
 If the target is vacant, i.e. the cluster $(\mathcal{C}_{V}',\mathcal{C}_{E}')$ satisfies
 \begin{equation}\label{eq:Intersection}
   \mathcal{C}_{V}' \cap \tilde{\mathcal{C}}_V = \emptyset
 \end{equation} for all clusters $\tilde{\mathcal{C}}=(\tilde{\mathcal{C}}_V,\tilde{\mathcal{C}}_E)\neq \mathcal{C}$ in $\mathfs{A}_{t_-}^\alpha$, then replace $\mathcal{C}$ by $\mathcal{C}'$ in $\mathfs{A}_{t}^\alpha$. Otherwise, let $\mathfs{I}_{V}$ denote the union of all  $\tilde{\mathcal{C}}_V$ so that the intersection in \eqref{eq:Intersection} is non-empty, and let
 \begin{equation*}
    \partial C^r := \{ e \in  \mathcal{E}(\Z^d)  \colon e=\{v,w\} , v\in \mathcal{C}_{V} , w \in \mathfs{I}_{V} \}
 \end{equation*}
denote the set of all edges to vertices, blocking the cluster $\mathcal{C}$ from moving in direction $r$. Pick an edge $\tilde{e} \in \partial C^r$ uniformly at random, and add it to $\mathcal{C}$ (this results in merging the cluster $\mathcal{C}$ with another cluster connected to $\mathcal{C}$ via $r$). All remaining clusters are left unchanged.
\end{definition}
\begin{remark}\label{rem:genconinfclsrforflr}
    For generator continuity purposes, for $\alpha \geq 0$ we expand the definition for infinite clusters. If $\alpha=0$ and infinite cluster has rate 1, and if $\alpha>0$ it has rate 0. In case of a ring in direction $r$, if $| \partial C^r|<
    \infty$ choose an edge uniformly at random, and if $| \partial C^r|= \infty$, then make no connections.
    For $\alpha \geq 0$, this will allow us to prove that the process is Feller (see Proposition \ref{prop:flrlgtebpnbds} below). For $\alpha < 0$ the process is anyways not Feller, so we do not attempt to make the generator continuous.
\end{remark}

\begin{figure}[h]
    \centering

    \begin{minipage}{0.24\textwidth}
        \centering
        \begin{tikzpicture}[yscale=0.5,xscale=0.5]
            \draw[step=1cm,blue,thin, dotted] (0,0) grid (6,6);
            \node at (1,0) [circle,fill=black,inner sep=1pt]{};
            \draw [black,very thick] (1,0) -- (1,1);
            \node at (1,1) [circle,fill=black,inner sep=1pt]{};
            \draw [black,very thick] (1,1) -- (1,2);
            \node at (1,2) [circle,fill=black,inner sep=1pt]{};
            \draw [black,very thick] (1,2) -- (1,3);
            \node at (1,3) [circle,fill=black,inner sep=1pt]{};
            \draw [black,very thick] (1,2) -- (2,2);
            \node at (2,2) [circle,fill=black,inner sep=1pt]{};
            \draw [black,very thick] (2,2) -- (2,3);
            \node at (2,3) [circle,fill=black,inner sep=1pt]{};
            \node at (2,0) [circle,fill=black,inner sep=1pt]{};
            \draw [black,very thick] (2,0) -- (2,1);
            \node at (2,1) [circle,fill=black,inner sep=1pt]{};
            \draw [black,very thick] (2,1) -- (3,1);
            \node at (3,1) [circle,fill=black,inner sep=1pt]{};
            \node at (6,2) [circle,fill=black,inner sep=1pt]{};
            \node at (5,0) [circle,fill=black,inner sep=1pt]{};
            \node at (5,1) [circle,fill=black,inner sep=1pt]{};
            \draw [black,very thick] (5,0) -- (5,1);
            \draw [black,very thick] (5,1) -- (5,2);
            \node at (5,2) [circle,fill=black,inner sep=1pt]{};
            \draw [black,very thick] (5,2) -- (6,2);
            \draw [black,very thick] (5,2) -- (5,3);
            \node at (5,3) [circle,fill=black,inner sep=1pt]{};
            \draw [black,very thick] (5,3) -- (5,4);
            \node at (5,4) [circle,fill=black,inner sep=1pt]{};
            \draw [black,very thick] (5,4) -- (6,4);
            \node at (6,4) [circle,fill=black,inner sep=1pt]{};
            \draw [black,very thick] (0,4) -- (1,4);
            \node at (1,4) [circle,fill=black,inner sep=1pt]{};
            \draw [black,very thick] (1,4) -- (2,4);
            \node at (2,4) [circle,fill=black,inner sep=1pt]{};
            \draw [black,very thick] (2,4) -- (3,4);
            \node at (3,4) [circle,fill=black,inner sep=1pt]{};
        \end{tikzpicture}
    \end{minipage}%
    \hfill
    \begin{minipage}{0.24\textwidth}
        \centering
        \begin{tikzpicture}[yscale=0.5,xscale=0.5]
            \draw[step=1cm,blue,thin, dotted] (0,0) grid (6,6);
            \node at (1,0) [circle,fill=black,inner sep=1pt]{};
            \draw [black,very thick] (1,0) -- (1,1);
            \node at (1,1) [circle,fill=black,inner sep=1pt]{};
            \draw [black,very thick] (1,1) -- (1,2);
            \node at (1,2) [circle,fill=black,inner sep=1pt]{};
            \draw [black,very thick] (1,2) -- (1,3);
            \node at (1,3) [circle,fill=black,inner sep=1pt]{};
            \draw [black,very thick] (1,2) -- (2,2);
            \node at (2,2) [circle,fill=black,inner sep=1pt]{};
            \draw [black,very thick] (2,2) -- (2,3);
            \node at (2,3) [circle,fill=black,inner sep=1pt]{};
            \node at (2,0) [circle,fill=black,inner sep=1pt]{};
            \draw [black,very thick] (2,0) -- (2,1);
            \node at (2,1) [circle,fill=black,inner sep=1pt]{};
            \draw [black,very thick] (2,1) -- (3,1);
            \node at (3,1) [circle,fill=black,inner sep=1pt]{};
            \node at (4,0) [circle,fill=black,inner sep=1pt]{};
            \node at (4,1) [circle,fill=black,inner sep=1pt]{};
            \draw [black,very thick] (4,0) -- (4,1);
            \draw [black,very thick] (4,1) -- (4,2);
            \node at (4,2) [circle,fill=black,inner sep=1pt]{};
            \draw [black,very thick] (4,2) -- (5,2);
            \node at (5,2) [circle,fill=black,inner sep=1pt]{};
            \draw [black,very thick] (4,2) -- (4,3);
            \node at (4,3) [circle,fill=black,inner sep=1pt]{};
            \draw [black,very thick] (4,3) -- (4,4);
            \node at (4,4) [circle,fill=black,inner sep=1pt]{};
            \draw [black,very thick] (4,4) -- (5,4);
            \node at (5,4) [circle,fill=black,inner sep=1pt]{};
            \draw [black,very thick] (0,4) -- (1,4);
            \node at (1,4) [circle,fill=black,inner sep=1pt]{};
            \draw [black,very thick] (1,4) -- (2,4);
            \node at (2,4) [circle,fill=black,inner sep=1pt]{};
            \draw [black,very thick] (2,4) -- (3,4);
            \node at (3,4) [circle,fill=black,inner sep=1pt]{};
        \end{tikzpicture}
    \end{minipage}%
    \hfill
    \begin{minipage}{0.24\textwidth}
        \centering
        \begin{tikzpicture}[yscale=0.5,xscale=0.5]
            \draw[step=1cm,blue,thin, dotted] (0,0) grid (6,6);
            \node at (1,0) [circle,fill=black,inner sep=1pt]{};
            \draw [black,very thick] (1,0) -- (1,1);
            \node at (1,1) [circle,fill=black,inner sep=1pt]{};
            \draw [black,very thick] (1,1) -- (1,2);
            \node at (1,2) [circle,fill=black,inner sep=1pt]{};
            \draw [black,very thick] (1,2) -- (1,3);
            \node at (1,3) [circle,fill=black,inner sep=1pt]{};
            \draw [black,very thick] (1,2) -- (2,2);
            \node at (2,2) [circle,fill=black,inner sep=1pt]{};
            \draw [black,very thick] (2,2) -- (2,3);
            \node at (2,3) [circle,fill=black,inner sep=1pt]{};
            \node at (2,0) [circle,fill=black,inner sep=1pt]{};
            \draw [black,very thick] (2,0) -- (2,1);
            \node at (2,1) [circle,fill=black,inner sep=1pt]{};
            \draw [black,very thick] (2,1) -- (3,1);
            \node at (3,1) [circle,fill=black,inner sep=1pt]{};
            \node at (4,0) [circle,fill=black,inner sep=1pt]{};
            \node at (4,1) [circle,fill=black,inner sep=1pt]{};
            \draw [black,very thick] (4,0) -- (4,1);
            \draw [black,very thick] (4,1) -- (4,2);
            \node at (4,2) [circle,fill=black,inner sep=1pt]{};
            \draw [black,very thick] (4,2) -- (5,2);
            \node at (5,2) [circle,fill=black,inner sep=1pt]{};
            \draw [black,very thick] (4,2) -- (4,3);
            \node at (4,3) [circle,fill=black,inner sep=1pt]{};
            \draw [black,very thick] (4,3) -- (4,4);
            \node at (4,4) [circle,fill=black,inner sep=1pt]{};
            \draw [black,very thick] (4,4) -- (5,4);
            \node at (5,4) [circle,fill=black,inner sep=1pt]{};
            \draw [black,very thick] (0,4) -- (1,4);
            \node at (1,4) [circle,fill=black,inner sep=1pt]{};
            \draw [black,very thick] (1,4) -- (2,4);
            \node at (2,4) [circle,fill=black,inner sep=1pt]{};
            \draw [black,very thick] (2,4) -- (3,4);
            \node at (3,4) [circle,fill=black,inner sep=1pt]{};
            \draw [red,dotted,very thick] (3,1) -- (4,1);
            \draw [red,dotted,very thick] (3,4) -- (4,4);
        \end{tikzpicture}
    \end{minipage}%
    \hfill
    \begin{minipage}{0.24\textwidth}
        \centering
        \begin{tikzpicture}[yscale=0.5,xscale=0.5]
            \draw[step=1cm,blue,thin, dotted] (0,0) grid (6,6);
            \node at (1,0) [circle,fill=black,inner sep=1pt]{};
            \draw [black,very thick] (1,0) -- (1,1);
            \node at (1,1) [circle,fill=black,inner sep=1pt]{};
            \draw [black,very thick] (1,1) -- (1,2);
            \node at (1,2) [circle,fill=black,inner sep=1pt]{};
            \draw [black,very thick] (1,2) -- (1,3);
            \node at (1,3) [circle,fill=black,inner sep=1pt]{};
            \draw [black,very thick] (1,2) -- (2,2);
            \node at (2,2) [circle,fill=black,inner sep=1pt]{};
            \draw [black,very thick] (2,2) -- (2,3);
            \node at (2,3) [circle,fill=black,inner sep=1pt]{};
            \node at (2,0) [circle,fill=black,inner sep=1pt]{};
            \draw [black,very thick] (2,0) -- (2,1);
            \node at (2,1) [circle,fill=black,inner sep=1pt]{};
            \draw [black,very thick] (2,1) -- (3,1);
            \node at (3,1) [circle,fill=black,inner sep=1pt]{};
            \node at (4,0) [circle,fill=black,inner sep=1pt]{};
            \node at (4,1) [circle,fill=black,inner sep=1pt]{};
            \draw [black,very thick] (4,0) -- (4,1);
            \draw [black,very thick] (4,1) -- (4,2);
            \node at (4,2) [circle,fill=black,inner sep=1pt]{};
            \draw [black,very thick] (4,2) -- (5,2);
            \node at (5,2) [circle,fill=black,inner sep=1pt]{};
            \draw [black,very thick] (4,2) -- (4,3);
            \node at (4,3) [circle,fill=black,inner sep=1pt]{};
            \draw [black,very thick] (4,3) -- (4,4);
            \node at (4,4) [circle,fill=black,inner sep=1pt]{};
            \draw [black,very thick] (4,4) -- (5,4);
            \node at (5,4) [circle,fill=black,inner sep=1pt]{};
            \draw [black,very thick] (0,4) -- (1,4);
            \node at (1,4) [circle,fill=black,inner sep=1pt]{};
            \draw [black,very thick] (1,4) -- (2,4);
            \node at (2,4) [circle,fill=black,inner sep=1pt]{};
            \draw [black,very thick] (2,4) -- (3,4);
            \node at (3,4) [circle,fill=black,inner sep=1pt]{};
            \draw [black,very thick] (3,1) -- (4,1);
        \end{tikzpicture}
    \end{minipage}%
    \caption{Example of move step and connection step evolving from left to right.}
    \label{fig:all-configs}
\end{figure}

Note that in dimension $1$, the boundary of clusters are always of size $2$, thus when a cluster moves, it can encounter at most one other cluster. Thus, one does not need a mechanism to choose connections such as in $d>1$.

We will impose throughout this article that $\mathfs{A}_0$ is sampled from a stationary and ergodic measure with marginal distribution $\prob(\mathfs{A}_0(v)=1)=p$, i.e., a distribution over $\{0,1\}^{\BZ^d}\times \{0,1\}^{\mathcal{E}(\BZ^d)}$ which is invariant under all $\BZ^d$ translations. An important special case is i.i.d.~site percolation with parameter $p\in(0,1]$ for the vertices, and an empty edge set at initiation i.e. $\mathfs{E}_0=\emptyset$.

Let us stress that the rate at which clusters move changes over time. In particular, as we will see in the sequel, this may create infinite clusters in finite time, for which Definition~\ref{def:clustercluster} no longer make sense. This is addressed in the following definition of well-definedness for the Cluster-Cluster model, and blowups.

\begin{definition}
Whenever for some parameters $\alpha$ and a distribution for an initial configuration there is no c\`adl\`ag process on $\{0,1\}^{\BZ^d}\times \{0,1\}^{\mathcal{E}(\BZ^d)}$ satisfying Definition \ref{def:clustercluster}, we say that the Cluster-Cluster model is not well defined for these parameters. If there exists some $t_b>0$ such that the process is well defined for all $t<t_b$ and not well defined for $t=t_b$, we say that the process has a blowup at time $t_b$. We call a blowup immediate if the Cluster-Cluster model is not well-defined for any time $t>0$. We say that the Cluster-Cluster model exhibits an eventual blowup if we see a blowup in finite time, but no immediate blowup. 
\end{definition}


\subsection{Main results}

Our main results concern the long-term behavior of the Cluster-Cluster model on $\BZ^d$. To this end, denote by $\mathfs{C}_0(t)$ the cluster started closest to $0$ at time $t$.
The first result claims that there are no infinite clusters in the no-speedup regime. 
\begin{theorem}\label{thm:finite_clusters}
For every \(d\ge 1\), every \(\alpha\ge 0\), and every stationary and ergodic starting configuration $\mathfs{A}_0$,
\[
    |\mathfs{C}_t(0)|<\infty
    \qquad\text{for all }t<\infty \text{ almost surely.}
\]
\end{theorem}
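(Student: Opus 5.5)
We reduce the statement to a finite-speed-of-propagation estimate. For $\alpha\ge 0$ every finite cluster rings at rate $|\mathfs{C}|^{-\alpha}\le 1$. Hence the whole dynamics can be driven by a family of independent rate-one Poisson clocks attached to the sites of $\BZ^d$, with a thinning. When the clock at $v$ rings, we let the cluster containing $v$ attempt a move with probability $|\mathfs{C}|^{-\alpha}/|\mathfs{C}|$, using an independent uniform variable for the thinning and another for the direction. A cluster is then activated at total rate $|\mathfs{C}|\cdot|\mathfs{C}|^{-\alpha}/|\mathfs{C}|=|\mathfs{C}|^{-\alpha}$, as required.

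This site-based graphical construction is not yet local, because a single ring may move a large cluster. We therefore use a cleaner bound. Every vertex ever in $\mathfs{C}_t(0)$ must be linked to the origin through a chain of interactions, and each interaction is triggered by a clock ring. Accordingly, we define an \emph{influence graph} on space-time: whenever some cluster acts at time $s$ (moves or merges), join all sites of that cluster and of the blocking clusters at time $s$.

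The key steps are as follows.

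(1) Show that on $[0,t]$, for a.e.\ realisation, $\BZ^d$ splits into finite ``islands'' that do not interact during $[0,t]$. This gives a well-defined process (compare the Feller construction in Proposition~\ref{prop:flrlgtebpnbds}, which we take as given) and finite clusters. To find the islands, fix a small time $\delta>0$ and work on $[0,\delta]$. Since clusters only grow by merging, and each merge or move requires a ring, a vertex can end up in a large cluster by time $\delta$ only if there is a long chain of rings inside a short time window.

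(2) Handle the spatial extent. A cluster of size $k$ that moves shifts by $1$, so the relevant region per ring is its own size plus a neighbourhood. This is bounded using the initial cluster sizes, which are a.s.\ finite by assumption, and stationarity.

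(3) Use a Harris-type percolation argument. Declare a site \emph{frozen} on $[0,\delta]$ if no clock in its initial cluster rings during that window. For $\alpha\ge 0$ the ring rate of a cluster is at most $1$, independently of its size. Consequently, by stationarity and ergodicity, with $\delta$ small the frozen initial clusters form a family whose complement has only finite components a.s. For $d=1$ this is immediate, since a single frozen cluster acts as a wall. For $d\ge2$ we instead take frozen ``shells'', namely annuli around each finite region, with positive density, and use ergodicity to ensure that every point is surrounded by one. Inside the shell at most finitely many rings occur, so the enclosed cluster stays finite on $[0,\delta]$. Then we iterate over $[k\delta,(k+1)\delta]$. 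Here the configuration at time $k\delta$ is again stationary, ergodic and consists of finite clusters, since the dynamics commutes with translations. This preserves the hypotheses at each step, and after $t/\delta$ steps all clusters at time $t$ are finite.

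The main obstacle is step (3) in $d\ge2$. A frozen shell need not block anything when the \emph{enclosed} cluster is empty of vertices near it, and conversely a moving cluster outside the shell may merge into it. Freezing only means "no ring of its own". The shell's clusters can still be hit by others and merge, which changes their rate (only downward in rate, since $\alpha\ge0$) but also makes them part of large clusters. I would first try to require that the whole annulus be vacant-or-frozen and that no ring occurs in a neighbourhood of thickness $2$. Its complement in a fixed box then evolves autonomously, because information crosses only via rings adjacent to it. I would choose $\delta$ depending on the random tail of initial cluster sizes, so that such annuli exist at positive density by the ergodic theorem.
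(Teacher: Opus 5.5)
There is a genuine gap. It is the one you flag at the end, but it is structural rather than technical: the dynamics has no finite range, so \emph{frozen} regions are not barriers. A cluster whose own clocks do not ring can still be hit by a neighbour that rings. After the merger it is translated every time the merged cluster rings, and in your site-clock construction that ring may come from a site of the neighbour arbitrarily far from the shell.

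This already refutes your $d=1$ claim that a single frozen cluster is a wall. The left neighbour rings towards it and merges with it. The merged interval then rings through one of its left sites and acts on the far side of the frozen cluster, either by moving into a vacant site there or by merging with the right neighbour.

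Your repair, forbidding rings within distance $2$ of a vacant-or-frozen annulus, fails for the same reason. Whether a cluster touching the annulus moves is decided by clocks anywhere on that cluster. Controlling the spatial extent of such clusters during $[0,\delta]$ is equivalent to the finiteness you are trying to prove. Two smaller points. In the site-clock construction, the event that no clock of an initial cluster $\mathfs C$ rings has probability $e^{-|\mathfs C|\delta}$, not at least $e^{-\delta}$; with thinned rings, acceptance depends on the future size of the cluster. And since frozen status is assigned cluster by cluster, to initial clusters that may be arbitrarily large and elongated, the claim that the non-frozen ones have only finite components for small $\delta$ is not justified for a general stationary ergodic $\mathfs A_0$.

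The iteration has an independent problem. You choose $\delta$ depending on the tail of the current configuration, so the $k$-th step length depends on the law at the end of the previous step, and those tails may become heavier each time. Nothing prevents $\sum_k\delta_k<\infty$, which is exactly a finite-time blowup with all clusters finite at every intermediate time. The hypothesis $\alpha\ge0$ enters your argument only as \emph{rates are at most $1$, so most clusters are frozen}. It is never converted into an estimate that is uniform in time.

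The paper obtains such an estimate by an averaged argument rather than a pathwise one. Let $V_k(t)$ be the density of sites in clusters of size $k$, grouped into smoothed dyadic weights $W_n(t)$. Every cluster rings at rate $|\mathfs C|^{-\alpha}\le1$, and each merger changes the smoothed weights by at most a constant times the mass of the ringing cluster. So the total mass flux between dyadic scales on $[0,T]$ is at most $9C_0T$. Hence the flux into scales above $N$ tends to $0$ as $N\to\infty$, and $\sum_kV_k(T)=1$. A time-uniform bound of this kind on mass transport across scales is the idea your proposal is missing.
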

In Appendix \ref{appendix}, we present an alternative proof of Theorem~\ref{thm:finite_clusters} which is weaker in the sense that it shows non-explosion only for $\alpha\ge1$. However, we include it because we believe it is elegant and teaches one about the cluster formation structure.

The second result assures that for sufficiently negative $\alpha$ there is a blowup. 
\begin{theorem}
For any stationary and ergodic starting configuration the following hold.
\begin{enumerate}
  \item  Let $d> 1, \alpha<-1, p=1$. Then there is almost surely an immediate blowup.
  \item  Let $d> 1, \alpha=-1, p=1$. Then there is almost surely a blowup in finite time.
  \item  Let $d>1, \alpha<-1-\frac{2}{d}, p\in(0,1)$. Then there is almost surely a blowup in finite time.
\end{enumerate}
\label{thm:blowupregieme}\end{theorem}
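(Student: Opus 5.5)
The plan is to treat the three parts separately, in each case building a comparison process in which clusters grow at least as fast as in the true dynamics, and then showing that this comparison already reaches infinite size in finite (or arbitrarily small) time.

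\textbf{The case $p=1$.} Here every vertex is occupied, so no move can ever succeed. Every clock ring of a cluster $\mathfs{C}$ in direction $r$ therefore merges $\mathfs{C}$ with a neighbouring cluster across its boundary in direction $r$. I would follow a tagged cluster, say the one containing the origin. It rings at rate $|\mathfs{C}|^{-\alpha}=|\mathfs{C}|^{|\alpha|}$, and each ring increases its size by at least $1$. Rings of other clusters that merge into it only help. So the size is stochastically bounded below by a pure birth chain with rates $\lambda_n=n^{|\alpha|}$ per jump of size at least one. The explosion time of such a chain is $\sum_n \mathrm{Exp}(\lambda_n)$, whose mean is $\sum_n n^{\alpha}$. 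For $\alpha<-1$ this sum is finite, giving explosion in finite time almost surely.

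For an \emph{immediate} blowup in part (1), I would improve this using the ergodic initial configuration. The mean explosion time started from size $n$ is $\sum_{k\ge n}k^{\alpha}\to 0$. Also, for any $\varepsilon>0$, the cluster at a random far site has a positive-probability chance to blow up before $\varepsilon$. Translation invariance and ergodicity then give that almost surely some cluster blows up before time $\varepsilon$, for every $\varepsilon$. Hence the process is not defined for any $t>0$.

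The issue is the difference between a cluster absorbing infinitely many vertices in finite time and non-existence of a càdlàg process. I would argue that once some finite cluster explodes at time $t_b$, no configuration can satisfy Definition~\ref{def:clustercluster} at $t_b$.

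For $\alpha=-1$ the rates are linear, $\lambda_n=n$, and the sum $\sum 1/n$ diverges. So a single cluster only grows exponentially (Yule-type growth) and gives no blowup. Here the plan is to exploit merges: when a cluster of size $n$ merges with a neighbour of size $m$, its size jumps to $n+m$. Since all clusters grow at least like independent Yule processes of rate $n$, after time $t$ typical neighbours have size of order $e^{t}$. I would compare with a coalescence in which, at each ring, the cluster absorbs an adjacent cluster that has been growing in parallel. Concretely, I would set up a hierarchical comparison with dyadic blocks. Blocks of scale $2^k$ merge into a cluster of size about $2^{kd}$ within a time $\tau_k$. Then $\tau_{k+1}$ is of order $2^{-kd}$ times a constant number of needed rings, because merging $2^d$ such blocks needs $O(1)$ rings of clusters of size $2^{kd}$ at rate $2^{kd}$. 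Summing gives $\sum_k\tau_k<\infty$, with Borel–Cantelli controlling the probability that some scale fails. This multiscale step is where I expect subtlety, because rings choose random directions and random boundary edges, so a block need not merge with the specific neighbour intended. I would handle this by only requiring that each block merge with \emph{all} its $2^d$ sibling blocks eventually. Boundary contact across a shared face is guaranteed when $p=1$, and each ring in the right direction has probability $1/(2d)$.

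\textbf{The case $p\in(0,1)$, $\alpha<-1-2/d$.} Now moves succeed, and clusters must travel to meet. My plan is to use the same multiscale scheme, but with a travel phase. A cluster of size $n$ with diameter at least $n^{1/d}$ moves at rate $n^{|\alpha|}$. The typical distance to a neighbouring cluster of comparable size at scale $L=n^{1/d}$ is order $L$, which takes time about $L^2 n^{\alpha}=n^{2/d+\alpha}$ for diffusion to cover. Merging $O(1)$ such clusters doubles the size, so the total time is $\sum_k (2^k)^{2/d+\alpha+1}$. Here the extra $+1$ accounts for needing order $n$ collisions/merges at density level, i.e.\ a conservative count. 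This sum is finite exactly when $\alpha<-1-2/d$. The main obstacle is making the travel estimate rigorous despite interactions: other clusters block moves and induce early merges, which only increase sizes, but they could also leave a tagged cluster isolated in a low-density region. I would try first a monotonicity/domination argument showing that merges never slow the size growth. The idea is to bound the time to meet a neighbour by a random-walk hitting time in a box of side $CL$, using ergodicity to guarantee density $p$ of mass inside the box. I would then conclude by Borel–Cantelli over scales, as in the $\alpha=-1$ case.
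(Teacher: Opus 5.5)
Your part (1) is essentially the paper's argument: you dominate the inter-ring times by independent $\operatorname{Exp}(n^{-\alpha})$ variables, show a positive probability of exploding before $\varepsilon$, and then apply the translation-invariant $0$--$1$ law. So that part is fine. Parts (2) and (3), however, have genuine gaps.

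\textbf{Part (2).} The multiscale step fails as stated. In your hierarchy, a scale-$k$ block is a single cluster only if all $2^{d(k-1)}$ bottom-level blocks inside it have merged. At time $\tau_1$ a given bottom block is unmerged with positive probability, so by ergodicity a positive density of them is unmerged. Hence the probability that a scale-$k$ block succeeds tends to $0$, and there is nothing summable to feed into Borel--Cantelli.

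If you instead only ask for a cluster of size of order $2^{kd}$, the claim that $O(1)$ rings merge the siblings, each succeeding with probability $1/(2d)$, breaks down. A ring in direction $r$ adds a uniformly chosen edge of $\partial_r C$. Once $C$ has holes and a ragged face, most of these edges lead to small unabsorbed clusters rather than to the sibling's big cluster, and contact between the two big clusters along the shared face is no longer guaranteed. Repairing this would need a full good/bad-block renormalization for a dependent dynamics.

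The idea you are missing, which makes (2) short, is the paper's percolation coupling. At $\alpha=-1$ the cluster clock of rate $|C|$ is the superposition of independent rate-one vertex clocks. The dynamics can be realized so that whenever the clock at $x$ rings in direction $y-x$, the vertices $x$ and $y$ end up in the same cluster. If $y$ is already in the cluster of $x$, one adds a uniform edge of $\partial_r C$ instead, which keeps the added edge uniform on $\partial_r C$. Each edge is then tried independently at rate $1/d$. So at time $t$ the clusters contain those of Bernoulli bond percolation with parameter $1-e^{-t/d}$, which exceeds $p_c(\BZ^d)<1$ at a finite time.

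\textbf{Part (3).} You recover the correct exponent $1+2/d+\alpha$, but neither tool you propose is available.
\begin{itemize}
\item There is no monotone coupling for this dynamics, since merges change the geometry the walk sees. So ``merges never slow the growth'' cannot be assumed, and the paper never uses such a statement.
\item A tagged-cluster Borel--Cantelli over scales also fails, for three reasons.
\begin{itemize}
\item A comparable partner need not be within distance $\asymp m^{1/d}$. Mass density $p$ in the box does not help, since a blocked move may add as little as one site.
\item The probability that a free run of the walk hits a partner at distance $r$ before escaping is only of order $\operatorname{cap}(\mathcal D)/r^{d-2}$. This is not close to one, and it is polynomially small at the distances one can actually guarantee.
\item Successive attempts are not independent, because the partner may merge away in between.
\end{itemize}
\end{itemize}

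The paper avoids tracking a single cluster; its argument runs as follows.
\begin{itemize}
\item It works with the ergodic density $B_n(t)$ of black scale-$n$ mass, made monotone by the stopping times $\mathcal T_n$ and recoloring.
\item A packing lemma shows that while $B_n\ge a_n=cpn^{-2}$, half of that mass has a black scale-$n$ partner within $r_n=C(m_n/a_n)^{1/d}$.
\item A pigeonhole over $m_n+1$ blocks of attempted moves yields a run of unblocked moves that is exactly a simple random walk. This is the rigorous source of the $+1$ in the exponent: up to $m_n$ blocked moves may occur first.
\item This gives exit from scale $n$ with probability at least $qn^{-\kappa}$ within time $h_n\asymp m_n^{1+\alpha}r_n^2n^{\kappa}$. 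At the density level this shrinks $B_n$ geometrically, so $O(n^\kappa\log n)$ windows suffice.
\item Finally, the mass budget $\sum_n a_n\le p/2$ forces at least half of the mass into an infinite cluster at $\lim_n\mathcal T_n<\infty$.
\end{itemize}
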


Next we show that in the regime $\alpha\in(-1,0)$ one can construct stationary and ergodic starting configurations that lead to immediate explosion and configurations that never explode a.s.
\begin{theorem}\label{bad_start_config}
    For every $p\in(0,1]$, $\alpha<0$, there exists a translation invariant and ergodic starting condition, such that there exists an infinite cluster almost surely at any strictly positive time.
\end{theorem}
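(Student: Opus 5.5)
We construct a starting configuration with heavy-tailed cluster sizes, so that very large clusters, which move extremely fast when $\alpha<0$, are present at every scale with positive density. The idea is a hierarchical, or multi-scale, tiling of $\mathbb{Z}^d$, randomly shifted so that it is translation invariant and ergodic. For scales $L_k = 2^{k}$ (or faster growing), place in a positive fraction of the boxes of side $L_k$ a pre-formed cluster. For example, take a straight segment of length $\ell_k$ along a coordinate axis, so that its boundary in the direction of motion is small and it is easy to control. The density of sites covered at scale $k$ is chosen as $q_k$ with $\sum_k q_k \le p$. The remaining sites are filled with singletons to reach marginal density $p$, or left empty. Randomizing the origin of each scale's grid independently gives a stationary measure, and the standard mixing-of-random-shifts argument gives ergodicity; if needed, take an ergodic component.

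The mechanism for an infinite cluster at any time $t>0$ is as follows. A cluster of size $\ell$ jumps at rate $\ell^{-\alpha}=\ell^{|\alpha|}$. Within time $t$ it therefore makes about $t\,\ell_k^{|\alpha|}$ attempted moves. Each move either relocates it or attaches it to a neighbouring cluster, and attachments only enlarge it and speed it up further. Place the scale-$k$ clusters so that each scale-$k$ cluster is adjacent to, or at distance at most $1$ from, a scale-$(k+1)$ cluster. An explicit nested arrangement does this: inside each scale-$(k+1)$ box, put a scale-$k$ segment touching the big segment, in the same position in every box. When the clock of the smaller cluster rings in the direction of the larger one, the two merge. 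This happens before time $t 2^{-k}$ with probability at least $1-\exp(-c\, t\,2^{-k}\ell_k^{|\alpha|})$. Choosing $\ell_k$ growing fast enough, say $\ell_k = 2^{k^2}$, makes these probabilities summable in $k$ for every fixed $t>0$.

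Fix the origin's chain. Follow the cluster containing $0$'s nearby scale-$k_0$ cluster; by Borel--Cantelli, for all $k\ge k_0$ the scale-$k$ cluster in the chain merges with the scale-$(k+1)$ cluster by time $\sum_k t2^{-k}<t$. The chain is infinite, so the resulting cluster is infinite, and with positive probability it contains a site near the origin. Ergodicity then upgrades this to almost surely existing somewhere. One subtlety must be handled: other merges may occur first, but since merges only enlarge clusters and thereby increase rates when $\alpha<0$, one can argue monotonically. Use a coupling in which the directed clock attempts of the chain persist: the cluster containing the chain member rings at least at rate $\ell_k^{|\alpha|}$, and whichever direction the ring points, a ring toward the larger partner that is not blocked by another cluster merges with the partner, while blocking merges with something else that is still connected toward the chain. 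To keep this clean, we isolate each chain by placing it in otherwise empty corridors, so that the only possible obstruction is the intended partner.

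The main obstacle is this last point: arranging geometry and densities so that there is simultaneously enough empty space (so that no uncontrolled interactions, including moves away from the partner, break adjacency), marginal density exactly $p$ even for $p=1$, and the required nesting. For $p=1$ every site is occupied, so nothing can move away. A cluster then moves only by merging, and any ring merges it with a neighbour, which makes the adjacency argument easier, since each ring produces a merge. In that case I would make the chain members adjacent across their whole side, so that any ring in the partner's direction merges them. For $p<1$, I would put the nested chains in fully occupied blocks whose density is tuned to $p$ and surround them with vacancies, and I would verify that moves away from the partner at rate $\ell_k^{|\alpha|}$ cannot outpace the merge ring. Each ring toward the partner merges immediately, while a ring away moves the cluster by only one step. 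The safest fix is therefore to make the chain fill a solid block, so that the cluster is surrounded and cannot move at all.
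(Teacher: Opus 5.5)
Your overall strategy (a stationary hierarchical configuration in which large clusters ring at rate $s_k^{|\alpha|}$, combined with Borel--Cantelli over scales) is the same as the paper's. However, the configuration you describe lacks the property your argument rests on: an infinite chain $C_{k_0},C_{k_0+1},\dots$ with $C_k$ adjacent to $C_{k+1}$.

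\textbf{No infinite chain.} A block of scale $k+1$ contains $(L_{k+1}/L_k)^d$ sub-blocks, but your scale-$(k+1)$ segment borders only the one designated scale-$k$ segment ``in the same position in every box''. A straight segment could in any case border only the sub-blocks lying along it, a fraction of order $(L_k/L_{k+1})^{d-1}$. So ``each scale-$k$ cluster is adjacent to a scale-$(k+1)$ cluster'' is false. Use a uniform nested shift, as the paper does; independent shifts per scale are incompatible with your nesting anyway. Then the chain through the nested blocks of a fixed site survives the step from $k$ to $k+1$ with probability at most of that order, independently in $k$, so it survives all steps with probability $0$. Taking a countable union over sites, almost surely there is no infinite chain at all, and your Borel--Cantelli step has nothing to act on.

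The missing idea is a block geometry in which the cluster of a level-$(k+1)$ block borders \emph{every} lower-level cluster of that block. The paper gets this from $3\times\ell\times\cdots\times\ell$ blocks. The exterior of each block is one cluster, and because the block is only three sub-blocks thick, its interior is a single slab of sub-blocks. Each of these shares a full face of size $\ell^{(d-1)k}$ with the exterior, so every cluster has a higher-layer neighbour and every chain is infinite.

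\textbf{Uniform choice of blocking edge.} Your claim that a ring toward the partner merges with it ignores that a blocked cluster attaches along a \emph{uniformly chosen} blocking edge. Once the chain cluster is surrounded (which your mobility fixes require), or has absorbed other clusters, a ring toward $C_{k+1}$ may attach to a third cluster. ``Merges only help'' is not a valid monotonicity: absorbing, e.g., its own lower-scale clusters can drive down the fraction of blocking edges that lead to the partner, and the larger ring rate need not compensate. You need a lower bound on this fraction that survives all earlier merges.

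In the paper, every other neighbour of a layer-$k$ cluster lies inside its own block. So after any absorptions there are still $G_k=\ell^{(d-1)k}$ edges to the exterior and at most $B_k\le(3\ell^{d-1})^k$ others. The thinness of the blocks makes $B_k/G_k\le 3^k$ independently of $\ell$. Choosing $\ell$ with $(\ell^{d-1})^{|\alpha|}>6$ then gives an upward rate of at least $2^k$ for large $k$.

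\textbf{The case $p<1$.} This is left open in your proposal: an infinite chain cannot sit inside a finite solid block, and chain members bordering vacancies can walk away. The paper instead deletes entire low-layer clusters, with an independent thinning at one layer to hit density exactly $p$. The higher-level blocks keep fully occupied faces, so the hierarchical tail stays immobile and the $p=1$ argument carries over.
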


\begin{theorem}\label{thm:good_start_config}
    For every $p\in(0,1]$, $\alpha>-1$
    there exists a translation invariant and ergodic starting configuration such that for every $t>0$ all clusters are finite a.s.
\end{theorem}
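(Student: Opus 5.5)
We need to construct, for $p\in(0,1]$ and $\alpha>-1$, a stationary ergodic initial configuration under which all clusters stay finite for all $t>0$. The idea is to use \emph{walls}: initial clusters that are so large that, for $\alpha>-1$, they essentially never move in finite time. These walls partition $\Z^d$ into finite boxes, so every cluster stays trapped in a finite region. Concretely, fix a large scale $L$ and a random uniform shift $U\in\{0,\dots,L-1\}^d$. Place along the boundaries of the cubes $U+L\Z^d+[0,L)^d$ a frame: the union of the faces of each cube, partitioned into finite pieces. Each piece is a tree cluster of size $m$ (edges $\mathfs{E}_0$ given), chosen so that adjacent pieces are not connected. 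Inside each cube, put sites with density chosen so that the overall density is $p$; for $p=1$ the cubes are completely filled. Averaging over the shift $U$ makes the law translation invariant. It is ergodic because it is a periodic configuration with uniform shift, and in particular it is mixing along the sublattice. If one insists on strict mixing, the interior can be taken i.i.d.

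The key difficulty is that walls might merge with inner clusters and then move. If the walls are disjoint trees, a wall piece that moves could open a gap. I would therefore avoid relying on walls staying still. Instead I would use a \emph{mass-counting} argument: everything inside a cube $Q$ together with its frame has total mass at most $M=O(L^d)$. For an infinite cluster to form by time $t$, mass must escape the cube region, which requires some cluster to cross the frame.

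Here is the simplest robust variant. Make the whole frame of each cube a \emph{single} tree cluster of size $\asymp L^{d-1}$, and let neighboring cubes' frames be disjoint, separated by a layer of empty sites (two parallel frames). Then:
\begin{enumerate}
\item A frame cluster, or any cluster containing a frame, has size at least $cL^{d-1}$. Hence its jump rate is at most $(cL^{d-1})^{-\alpha}$ if $\alpha\ge0$, and at most $M^{|\alpha|}\le CL^{d|\alpha|}$ if $\alpha<0$.
\item Clusters inside a frame cannot exit without the frame cluster itself moving, or without merging with it, which keeps them inside.
\item For an infinite cluster to form, some chain of frames must merge. Merging two adjacent frames requires a frame-containing cluster to make at least one move. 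Doing this across infinitely many frames requires infinitely many such moves in a chain.
\end{enumerate}

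The main obstacle is the regime $\alpha\in(-1,0)$, where merged clusters speed up. Here I would run a multiscale/percolation argument. Call a cube \emph{bad} by time $t$ if its frame cluster has moved at least once by time $t$. Before any merger across frames, the cluster containing the frame lies inside the cube, so its size is at most $CL^d$. But the relevant quantity is the rate per unit of \emph{boundary}: the mass is at least $L^{d-1}$, while merging with a neighbor requires crossing a gap of width $2$ at some specific point. In fact a single move suffices to bump into a neighbor, so this is delicate. I would instead tune the construction so that cubes are separated by corridors of width $K$, which requires $K$ moves. If the merged cluster has size $n$, its rate is $n^{|\alpha|}$, and the time needed for the next $K$ moves is about $K n^{-|\alpha|}$. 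A chain of $k$ merges yields size about $kL^d$, so the total time is about $\sum_k K(kL^d)^{-|\alpha|}$, which diverges precisely because $|\alpha|<1$. This sum is the source of the condition $\alpha>-1$. Making it rigorous needs a domination argument: construct a Peierls-type bound showing that the probability that the cluster of the origin reaches distance $nL$ by time $t$ decays in $n$. This uses that each of the $n$ successive mergers independently needs an exponential-clock time of order $K(kL^d)^{-|\alpha|}$, with enough independence from the Poisson clocks. Concentration of sums of such waiting times (their total mean diverges like $n^{1-|\alpha|}$) then gives a probability of reaching distance $n$ by time $t$ that tends to $0$. Borel–Cantelli over $n$, combined with the stationarity used in Theorem~\ref{thm:finite_clusters}'s setting, shows that all clusters are finite at each $t$, and by monotonicity in $t$ also simultaneously for all rational $t$, hence for all $t$. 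For $\alpha\ge0$ the statement already follows from Theorem~\ref{thm:finite_clusters}.
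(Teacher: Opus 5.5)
The gap is in the regime $\alpha\in(-1,0)$, which is the only nontrivial one. Your reduction of $\alpha\ge0$ to Theorem~\ref{thm:finite_clusters} and the ergodicity of a randomly shifted periodic configuration are fine.

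Your decisive step is that the crossing times along a chain add up to $\sum_k K(kL^d)^{-|\alpha|}=\infty$. This presupposes two things: that the corridors along a chain are crossed one after another, and that each merger adds only $O(L^d)$ mass to the cluster doing the crossing. Neither holds. Corridors are crossed in parallel by different clusters. A cluster typically merges with partners that have themselves been growing, so its size can double at each merger. Even without that, a cluster that has reached distance $nL$ may well contain of order $(nL)^d$ sites.

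Since your barrier costs a \emph{fixed} number of moves, a cluster of size $S$ clears it in time of order $K^2S^{-|\alpha|}$. This is summable along geometrically growing sizes. It is summable even along $S\asymp(nL)^d$ as soon as $d|\alpha|>1$. The ``independent exponential waiting times'' in your Peierls bound are also not available, because the rates are the sizes produced by the same dynamics. So nothing in your argument controls the speed-up cascade.

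Separately, for $p=1$ there are no vacant sites, so the corridors cannot exist and every clock ring is a merger. Theorem~\ref{thm:one-dimensional-phase-diagram}(3) shows that the fully packed model started from singletons explodes at a finite time for every $\alpha\in(-1,0)$. This indicates that a bounded-scale fully packed configuration cannot work.

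The missing idea is that the obstruction must become harder in proportion to the size of the cluster that has to break it. The paper uses a hierarchical configuration with scales $N_k$ chosen recursively. To cross the level-$k$ obstruction, the level-$k$ cluster must neutralize a positive fraction of the central level-$(k-1)$ subblocks. This cluster has size $\asymp N_k^d$, with constants depending only on the earlier scales. By independence of the subblocks' internal evolutions and a Chernoff bound, about $\varepsilon N_k^d$ of those subblocks are still closed at time $k$. Each jump neutralizes only boundedly many, so crossing costs at least $cN_k^d$ jumps. That cluster rings at rate $\lesssim N_k^{d|\alpha|}$, so by time $k$ it makes only $O(kN_k^{d|\alpha|})=o(N_k^d)$ jumps; this is exactly where $\alpha>-1$ is used. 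Choosing $N_k$ so that the failure probability is at most $k^{-2}$ and applying Borel--Cantelli gives finiteness at all times. A construction with fixed $L$ and $K$ cannot reproduce this linear-in-size cost.
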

\begin{definition}
  Let $\mathcal{S}^{(p)}$ denote the class of all stationary and ergodic initial configurations for the Cluster-Cluster model with density parameter $p$. We define
  \begin{equation}
      \begin{split}
          \alpha_{c,p}^+&:=\sup_{\mathfs{A}_0 \in \mathcal{S}^{(p)}}\sup\{\alpha \in \BR:\mathfs{A}^{\alpha}_t\text{ blows up for some finite }t \text{ almost surely}\}  \\
          \alpha_{c,p}^-&:=\inf_{\mathfs{A}_0 \in \mathcal{S}^{(p)}}\inf\{\alpha \in \BR:\mathfs{A}^{\alpha}_t\text{ has no blowup}\} . 
      \end{split}
  \end{equation} as the upper and lower critical value for blowup of the Cluster-Cluster model. 
\end{definition}
As an immediate consequence of Theorems \ref{thm:finite_clusters} and \ref{bad_start_config}, we get that the upper and lower critical value of the Cluster-Cluster model in $d$ dimensions coincide. 
\begin{corollary} For any $p\in (0,1]$ and $d>1$, we have that 
$\alpha^+_{c,p}=\alpha^-_{c,p}=0$.
\end{corollary}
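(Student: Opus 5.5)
The plan is to derive both equalities directly from Theorem~\ref{thm:finite_clusters} and Theorem~\ref{bad_start_config}, handling $\alpha^+_{c,p}$ and $\alpha^-_{c,p}$ separately. Throughout, I read ``blowup'' to include immediate blowup: an infinite cluster at every positive time means the process is not well defined at any $t>0$, so it blows up at a finite time. I also use that, for $\alpha\ge 0$, a.s.\ finiteness of all clusters at all times (together with the Feller construction of Proposition~\ref{prop:flrlgtebpnbds}) means the process is well defined for all times, i.e.\ has no blowup.

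\emph{Upper critical value.} For any $\mathfs{A}_0\in\mathcal{S}^{(p)}$ and any $\alpha\ge 0$, Theorem~\ref{thm:finite_clusters} gives that all clusters are finite for all $t<\infty$ a.s. Hence there is no blowup, so the inner set $\{\alpha:\mathfs{A}^\alpha_t \text{ blows up a.s.}\}$ lies in $(-\infty,0)$ and $\alpha^+_{c,p}\le 0$. Conversely, fix $\alpha<0$. Theorem~\ref{bad_start_config} supplies a configuration $\mathfs{A}_0^{(\alpha)}\in\mathcal{S}^{(p)}$ with an infinite cluster at every positive time a.s., so $\alpha$ belongs to the inner set for that configuration. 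Taking the supremum over $\alpha<0$ gives $\alpha^+_{c,p}\ge 0$.

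\emph{Lower critical value, upper bound.} For every configuration, Theorem~\ref{thm:finite_clusters} shows that $[0,\infty)\subseteq\{\alpha:\text{no blowup}\}$. So each inner infimum is at most $0$, and hence $\alpha^-_{c,p}\le 0$.

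\emph{Lower critical value, lower bound (main obstacle).} Here I must show that the relevant inner infimum is at least $0$, i.e.\ that no $\alpha<0$ lies in the no-blowup set. The only available tool is Theorem~\ref{bad_start_config}, and it gives blowup only for the specific configurations it constructs. So I must first check that a single construction works for all $\alpha<0$ at once. I expect this to hold because the construction should rely on heavy-tailed initial cluster sizes whose speedup $|\mathfs{C}|^{-\alpha}$ is unbounded for every negative $\alpha$. If not, I would take a translation-invariant ergodic mixture over a sequence $\alpha_n\uparrow 0$, for instance by superposing the constructions on sparse disjoint sublattices.

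There is a genuine problem with the quantifier, however. With the infimum over $\mathcal{S}^{(p)}$ as literally written, Theorem~\ref{thm:good_start_config} provides configurations without blowup for every $\alpha\in(-1,0)$. For such a configuration the inner infimum is at most $-1$, so the literal $\alpha^-_{c,p}$ is $\le -1$, and the lower bound $\alpha^-_{c,p}\ge 0$ cannot be proved. The claimed equality $\alpha^-_{c,p}=0$ therefore holds only if the outer extremum in the definition of $\alpha^-_{c,p}$ is a supremum over configurations. In that reading, the configuration from Theorem~\ref{bad_start_config} forces the inner infimum, and hence $\alpha^-_{c,p}$, to be $\ge 0$, and I would present the proof under that reading.
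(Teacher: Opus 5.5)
Your arguments for $\alpha^+_{c,p}=0$ and for $\alpha^-_{c,p}\le 0$ are exactly what the paper intends; the paper gives no proof beyond saying the corollary is an immediate consequence of Theorems~\ref{thm:finite_clusters} and~\ref{bad_start_config}.

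Your objection to the remaining inequality is also correct, and it is a defect of the statement, not of your proof. With the outer infimum over $\mathcal{S}^{(p)}$ as written, the configuration that Theorem~\ref{thm:good_start_config} provides for a given $\alpha\in(-1,0)$ has inner infimum at most $\alpha$. (You wrote ``at most $-1$''; that bound only appears after the outer infimum.) Taking the outer infimum over these configurations gives the literal value $\alpha^-_{c,p}\le -1$. For $p=1$, parts (1) and (2) of Theorem~\ref{thm:blowupregieme} even pin it at exactly $-1$. So the paper's one-line justification works only with a universal quantifier over initial configurations.

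The soft spot is your corrected lower bound, which you tie to a single configuration blowing up for every $\alpha<0$. The paper does not supply one. Its renormalization parameter must satisfy $\ell^{(d-1)|\alpha|}>6$, so that the speed-up $s_k^{-\alpha}$ beats the geometric loss in $G_k/(G_k+B_k)$; hence $\ell\to\infty$ as $\alpha\uparrow 0$. Your heavy-tail heuristic overlooks this competition, and the sublattice-superposition fallback is not worked out.

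You need neither. The configuration built for a given $\alpha<0$ depends on $\alpha$ only through $\ell$, and this includes the thinning used for $p<1$. The bound $r^{(k)}_{\mathrm{eff}}\ge\frac{1}{2d}\bigl((\ell^{d-1})^{-\alpha'}/3\bigr)^k$ is monotone in $-\alpha'$, so the same configuration blows up for every $\alpha'\le\alpha$. Its inner infimum is therefore at least $\alpha$, and letting $\alpha\uparrow 0$ gives $\alpha^-_{c,p}\ge 0$ under your supremum reading.

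Simpler still is the natural definition $\alpha^-_{c,p}:=\inf\{\alpha:\text{no blowup for every configuration in }\mathcal{S}^{(p)}\}$. By Theorems~\ref{thm:finite_clusters} and~\ref{bad_start_config} this set is exactly $[0,\infty)$, with no monotonicity in $\alpha$ needed. This is presumably the reading behind the paper's ``immediate consequence''.
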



In the fully packed case one-dimensional case, due to an astounding independence between cluster sizes, we obtain rigorous exact Smoluchowski type equations that allow us to present exact results about the phase diagram and growth bounds. 
Let the gelation time be defined by 
\begin{equation}\label{def:Gelation} 
    T_\alpha:=\sup\{t\ge0: \mathbb{E}[|\mathfs{C}_0(t)|]<\infty\}.
\end{equation}

\begin{theorem}
\label{thm:one-dimensional-phase-diagram}
For the fully occupied one-dimensional Cluster-Cluster model ($p=1,d=1$), the following bounds hold:
\begin{enumerate}
\item If \(\alpha>0\), then \(T_\alpha=\infty\) and
\[
   \mu(t) := \mathbb{E}[|\mathfs{C}_0(t)|]\asymp (1+t)^{1/\alpha}.
 \]
\item If \(\alpha=0\), then \(T_0=\infty\) and 
\(\mu(t)=e^t\). 
\item If \(-1<\alpha<0\), and let $q=-\alpha$, then \(0<T_\alpha<\infty\). More explicitly,
 \(\exists \kappa_q>0\) such that
\begin{equation*}
\label{eq:gel-time-bounds-sublinear}
    \frac1q
    \le
    T_q
    \le
    \min\left\{
        \frac1{\kappa_q},
        \frac1{1-2^{-q}}
    \right\}.
\end{equation*}

\item If \(\alpha=-1\), then \(T_{-1}=1\) and
\(\mu(t)=(1-t)^{-1}\).
\item If \(\alpha<-1\), then \(T_\alpha=0\).
\end{enumerate}
\end{theorem}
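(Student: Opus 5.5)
The plan is to exploit the one-dimensional, fully packed structure: with $p=1$ and $d=1$ every site is occupied, so clusters are consecutive intervals and no cluster can ever move. Every clock ring is therefore a connection attempt. When a cluster $\mathfs{C}$ rings (rate $|\mathfs{C}|^{-\alpha}$), it chooses left or right with probability $1/2$ each, and the gap between it and the corresponding neighbour closes. Equivalently, each ``gap'' (an unconnected edge $\{x,x+1\}$) closes at rate
\[
\tfrac12\bigl(|L|^{-\alpha}+|R|^{-\alpha}\bigr),
\]
where $L,R$ are the adjacent clusters. I would show that the sequence of cluster lengths, read along $\BZ$, remains a renewal sequence with i.i.d.\ lengths. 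This is the ``astounding independence'' the paper mentions. Concretely, I would try to prove that if at time $t$ the gaps form a stationary renewal process with length law $(\pi_k(t))_{k\ge1}$, then the closing dynamics preserve this product structure. The approach would be to write the generator on cylinder functions of consecutive lengths $(k_1,\dots,k_m)$ and check that the product form is mapped to a product form, with $\pi$ solving a closed Smoluchowski-type system. I expect the system to be
\[
\dot\pi_k=\tfrac12\sum_{i+j=k}\pi_i\pi_j\,\frac{i^{-\alpha}+j^{-\alpha}}{\nu}\;-\;\pi_k\Bigl(k^{-\alpha}+\frac{m_{-\alpha}}{\nu}\Bigr)\cdot(\text{normalisation}),
\]
with $m_{-\alpha}=\sum_k k^{-\alpha}\pi_k$. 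The exact constants would come out of the computation. Establishing this closure rigorously, including an approximation on finite rings or a finite-box truncation and a justification of exchanging limits, is the main obstacle. I would first verify it on a ring of $N$ sites, where the state space is finite and the claim reduces to checking that the uniform-over-compositions-weighted product measure is preserved, and then let $N\to\infty$.

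Given the closed system, I would work with the size-biased mean. By the renewal structure, $\mu(t)=\BE|\mathfs{C}_0(t)|=m_2/m_1$. Alternatively, and more simply, let $\rho(t)$ be the density of gaps per site, so that the mean cluster length is $1/\rho$. The gap density satisfies the exact equation
\[
\dot\rho=-\rho\,\BE[\text{closing rate of a typical gap}]=-\rho\, m_{-\alpha}/m_0 .
\]
The case analysis then proceeds as follows.

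\emph{$\alpha=0$.} Every gap closes at rate $1$, so the gaps disappear independently. The cluster length is then geometric with parameter $e^{-t}$, and $\mu(t)$ can be computed directly; this should give $e^t$, possibly after accounting for the size bias of the cluster at $0$.

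\emph{$\alpha=-1$.} The rate of a cluster is its length, and the generating function solves a Burgers-type PDE (the additive kernel). Solving it explicitly should yield $\mu(t)=(1-t)^{-1}$, and hence $T_{-1}=1$.

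\emph{$\alpha>0$.} Jensen applied to $m_{-\alpha}$ gives the estimate $\dot\rho\asymp-\rho^{1+\alpha}$. Integrating, $\rho\asymp(1+t)^{-1/\alpha}$. To transfer this to $\mu$, I would use a second-moment bound from the Smoluchowski system, namely that the length distribution remains comparable to its mean. This yields $\mu\asymp(1+t)^{1/\alpha}$ and $T_\alpha=\infty$.

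\emph{$-1<\alpha<0$.} For the bound $T\ge 1/q$, I would compare with the $\alpha=-1$ case: since $k^{q}\le k$, a monotone coupling gives slower growth. A differential inequality for $\mu$ should then give $\dot\mu\le\mu^{1+q}$, or similar, which stays finite until time $1/q$. For the upper bounds, I would show $\dot\mu\ge c\,\mu^{1+q}$ with $c=1-2^{-q}$, obtained from the gain of merging two clusters, and also with a constant $\kappa_q$ coming from an alternative estimate. Either inequality forces blowup before $1/c$.

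\emph{$\alpha<-1$.} The kernel $i^{q}+j^{q}$ with $q>1$ is superlinear. Starting from all singletons, I would show that $m_q$, and with it $\mu$, becomes infinite at any positive time. The idea is to show that for $k$ large, $\pi_k(t)\gtrsim$ (a polynomial in $t$) $\cdot\, c^k$, while the rates grow like $k^q$, so that a cascade produces heavy tails immediately. This gives $T_\alpha=0$.
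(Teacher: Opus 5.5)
Your overall architecture is the paper's: renewal structure, then a closed mean-field system, then differential inequalities. Your gap-density identity $\dot\rho=-\rho\,\mathbb{E}[X^{-\alpha}]$, with $X$ the length of a typical cluster, is exactly the paper's $\mu'=\mu A_\alpha$ once you note $\mu=1/\rho$. The formulas $e^t$ and $(1-t)^{-1}$ in (2) and (4) are those of $\sum_k kp_k=1/\rho$; the size-biased mean would give $2e^t-1$ and $(1-t)^{-2}$. So no size-bias transfer is needed, and for $\alpha=-1$ you get $\dot\rho=-1$ directly, without Burgers. Parts (2) and (4) are therefore fine. Jensen correctly gives the lower bound in (1), and via $\dot\mu\le\mu^{1+q}$ it gives $T_q\ge 1/q$; the monotone comparison with $\alpha=-1$ alone would only give $T_q\ge 1$.

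\textbf{The upper bounds in (1) and (3).} Both rest on reverse-Jensen inequalities that are not available.

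In (1), Jensen gives only $\mathbb{E}[X^{-\alpha}]\ge\rho^{\alpha}$, not ``$\asymp$''. Bounding $\mu$ from above means showing that not too much mass sits at \emph{small} sizes, and a second-moment bound (which controls the upper tail) cannot do that. Note that $T_\alpha=\infty$ itself is cheap, since $X\ge1$ gives $\mathbb{E}[X^{-\alpha}]\le 1$ and hence $\mu\le e^t$. The paper gets the polynomial bound from $P(z,t)=\sum_k p_kz^k$ and $B(z,t)=\sum_k k^{-\alpha}p_kz^k$. The triangular equation gives $\partial_tP=B(P-1)$, hence $\int_0^t B(z,s)\,ds\le\log\frac{1}{1-z}$. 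Compare this with $\log\mu(t)=\int_0^t A_\alpha(s)\,ds$, using $A_\alpha-B(z,\cdot)\le(1-z)^\alpha$ for $\alpha\le 1$ (and the decay of $\mathbb{E}[X^{-(\alpha-1)}]$ for $\alpha>1$), and take $1-z=(1+t)^{-1/\alpha}$.

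In (3), $\dot\mu=\mu\,\mathbb{E}[X^q]$ and concavity gives $\mathbb{E}[X^q]\le\mu^q$. Your $\dot\mu\ge c\mu^{1+q}$ would need $\mathbb{E}[X^q]\ge c\mu^q$, which fails for laws with a small-probability, very large component. Combined with $\dot\mu\le\mu^{1+q}$ it would also give the sharp rate $\mu\asymp(T_q-t)^{-1/q}$, which the paper explicitly does not claim. Moreover, integrating it yields blowup before $1/(qc)$, not $1/c$.

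What works is to track moments satisfying \emph{closed} inequalities. The triangular equation gives $\frac{d}{dt}\mathbb{E}[f(X)]=\mathbb{E}[X^{q}(f(X+Y)-f(X))]$ with $X,Y$ i.i.d.
\begin{itemize}
\item For $f(x)=x^{-q}$, symmetrization and concavity give $\frac{d}{dt}\mathbb{E}[X^{-q}]=\mathbb{E}[(X/(X+Y))^q]-1\le-(1-2^{-q})$. Since this moment starts at $1$ and stays positive before gelation, $T_q\le 1/(1-2^{-q})$.
\item For $f(x)=x^q$, symmetrization gives $M_q'\ge\kappa_q M_q^2$, hence $T_q\le 1/\kappa_q$.
\end{itemize}

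\textbf{Part (5) and the renewal step.} A lower bound $\pi_k(t)\gtrsim\mathrm{poly}(t)\,c^k$ is an exponential tail and is compatible with all moments being finite. It cannot produce $T_\alpha=0$; you need a genuine instantaneous-gelation argument. The paper applies Carr--da Costa to the kernel $i^q+j^q$ through the renewal reduction and the time change. Alternatively, the Borel--Cantelli proof of Theorem~\ref{thm:blowupregieme}(1) never uses $d>1$. With $p=1$ every ring is a merger, so the $n$-th waiting time of the origin's cluster is dominated by an independent $\mathrm{Exp}(n^{q})$ variable, and $\sum_n n^{-q}<\infty$. This gives explosion before any $\varepsilon>0$ with positive probability, and then almost surely by ergodicity.

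Finally, the renewal closure you flag as the main obstacle has a short direct proof, without a finite-ring approximation. Condition on $\{0,1\}$ being closed and split each cluster's clock into independent left and right halves. Then realize the crossing attempts at $\{0,1\}$ as independent time-changed Poisson processes driven by the two half-line evolutions. The event that the edge stays closed factorizes, which gives independence of the two sides and hence i.i.d.\ lengths.
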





\subsection{Related work}

Note that even for simpler aggregation processes than the Cluster-Cluster model with a single aggregate, not much is currently known about growth rates. For DLA, the best result is still Kesten's \cite{kesten1987long}, where it is proved that DLA with $n$ particles is contained in a ball of radius $n^{2/3}$. Similar results were recently established for the dielectric breakdown model \cite{losev2025long}. The only DLA type model in which we have exact growth bounds is an off-lattice version constructed by a composition of conformal maps called the stationary Hastings-Levitov(0) \cite{berger2022growth,berger2025logarithmic,procaccia2021dimension}.  For Multi-particle DLA (MDLA), a model which has a single aggregate but driven by a field of random walkers, much as our model, in high dimensions a linear growth rate is known in the high intensity regime. For 1d MDLA, exact growth bounds are known, with interesting critical behaviour \cite{elboim2020critical,sidoravicius2017one,sly2020one}. Mean field versions of our model were considered by employing equations resembling the Smoluchowski equations \cite{smoluchowski1918versuch}, as part of the study of Ostwald ripening \cite{ostwald1903lehrbuch,voorhees1985theory}. For the special case $p=1$, exact cluster size distribution is known \cite{pitman1999coalescent,aldous1998standard}, as the Cluster-Cluster model on the complete graph (random walk steps are taken by choosing a uniform random permutation) is exactly the standard additive coalescent with kernel $K(i,j)=i+j$. Mainly, the number of clusters at time $t>0$ is distributed $\text{Bin}(n,e^{-t})$. Conditional on having $k$ clusters, the clusters are distributed according to the uniform spanning forest with $k$ trees. For general $p\in(0,1]$ and $\alpha\in\BR$, the Cluster-Cluster dynamics on the complete graph corresponds to the Marcus-Lushnikov coalescent with kernel $K_\alpha(i,j)=i^{-\alpha}j+j^{-\alpha}i$. The expected growth rate in this case is
\[
    |\mathfs{C}_0(t)|\asymp
    \begin{cases}
        t^{1/\alpha}, & \alpha>0,\\[4pt]
        e^{ct}, & \alpha=0,\\[4pt]
        (T_{\mathrm{gel}}-t)^{-1/|\alpha|}, & \alpha<0,
    \end{cases}
\]
where $T_{\mathrm{gel}}$ is the blowup time or the gelation time as it is called in the coalescent literature. In Figure \ref{fig:linear}, one can see that for $\alpha>0$ indeed the conjectured mean field growth rate holds for $\BZ^d$. 
\begin{figure}
    \centering
    \includegraphics[width=0.5\linewidth]{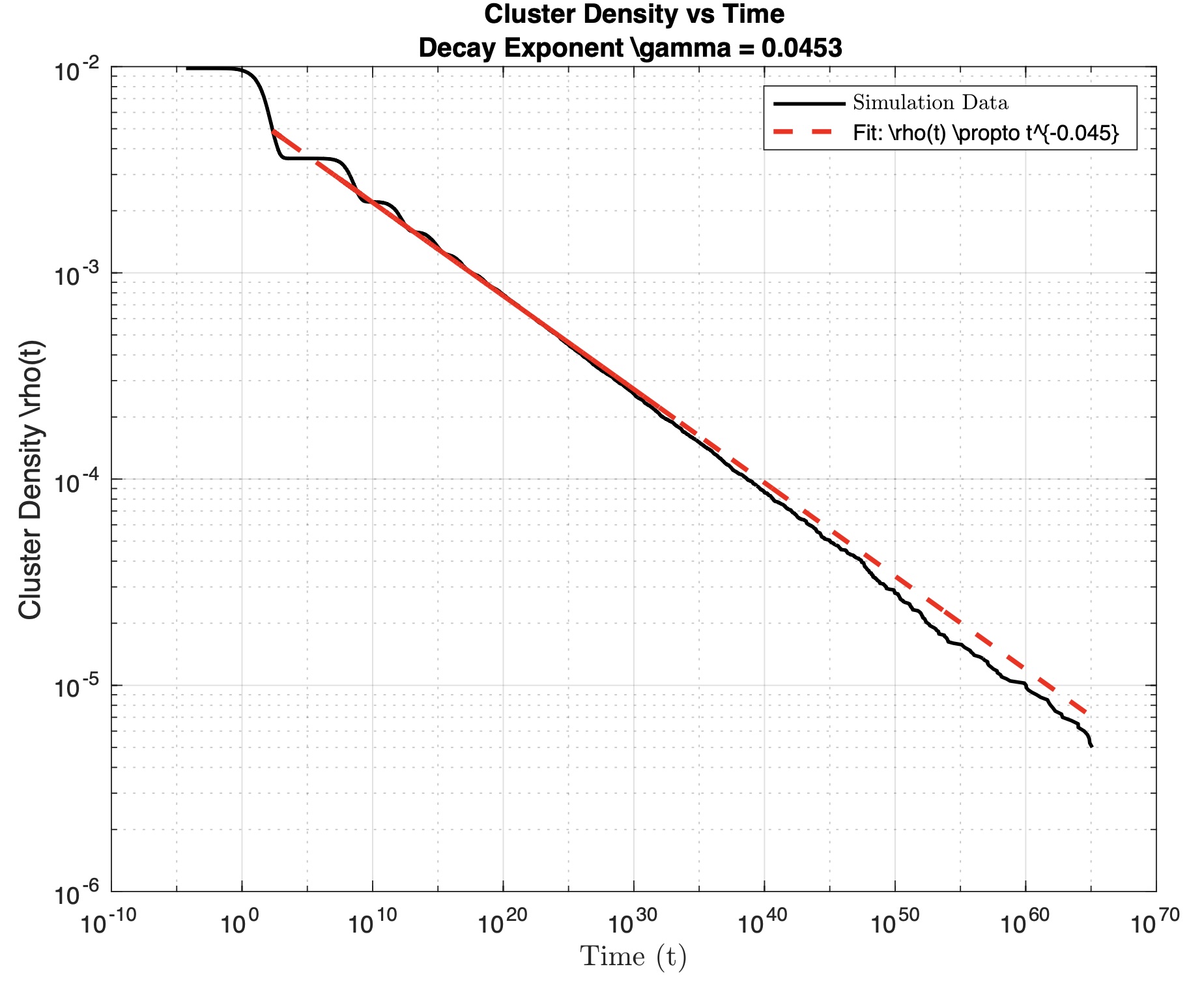}
    \caption{Simulation in $\BZ^2$  showing $t^{1/\alpha}$ growth rate for $\alpha=20$.}
    \label{fig:linear}
\end{figure}

Note also that some spatial versions of
Smoluchowski's coagulation equation have been analyzed in the literature
\cite{kryven2026spatial}.

The one-dimensional Cluster-Cluster model with \(p<1\) was recently studied in
\cite{berger2025onedimensionalclusterclustermodel}. This regime is
substantially different from the other regimes considered here. Indeed, in
dimension one, finite clusters are intervals and therefore have full dimension:
to interact, two clusters must first diffuse
across the vacant gaps separating them. Thus the relevant time scale contains
both the internal clock scale of a cluster of size \(m\), namely \(m^\alpha\),
and the diffusive time \(m^2\) needed to find another cluster at the typical
spacing. This additional diffusive cost changes the growth exponent. In
particular, the characteristic cluster size grows for all $\alpha>-2$ on the scale
\[
    t^{1/(\alpha+2)},
\]
rather than on the \(t^{1/\alpha}\)-type scale appearing in the fully occupied
one-dimensional case and in the higher-dimensional regimes discussed in this
paper. This suggests a broader heuristic: in variants of the Cluster-Cluster
model where clusters are constrained to remain ball-like, so that growth
requires the motion of full-dimensional bodies across gaps, one should again
expect the diffusive cost to enter the scaling and the growth rate to be of
order \(t^{1/(\alpha+2)}\). See the Pool aggregation model \cite{cai2026pool} for a possible construction of a full dimension and mass preserving aggregate. 

The Cluster-Cluster model on $\Z^d$ is also related to the kinetic-limit work of Hammond and Rezakhanlou
\cite{HammondRezakhanlou2007KineticLimit}, where coagulating Brownian point
particles with mass-dependent diffusivity converge to the spatial Smoluchowski
PDE. In the hard-core point-particle setting the effective kernel is of order
\(d(m)+d(n)\); thus, if \(d(m)\asymp m^{-\alpha}\), the corresponding
mean-field heuristic gives a typical mass scale \(t^{1/(\alpha+1)}\). This
differs from the sparse one-dimensional Cluster-Cluster model, where clusters are
extended intervals and must diffuse across vacant gaps, producing the scale
\(t^{1/(\alpha+2)}\). The distinction is that in the present model the geometry of the
cluster and the gaps between clusters are part of the dynamics, whereas in the
Smoluchowski limit the particles remain point masses. We also note the related
moment and mass-conservation results for the Smoluchowski PDE in
\cite{HammondRezakhanlou2007MomentBounds}, and Hammond's survey
\cite{Hammond2014SmoluchowskiSurvey}.

\subsection{Open Problems}

We now provide a selection of open problems concerning a phase transition for the occurrence of blowups, the growth rate of clusters, and the eventual local freezing of Cluster-Cluster model. 

\begin{open_problem}
Show that for all $d>1$, when initializing with i.i.d. site percolation, there exist some $-1<\alpha_\ast\le\alpha_c<0$, such that for all $\alpha>\alpha_c$ all clusters are finite a.s., for all $\alpha\in[\alpha_\ast,\alpha_c]$ there is eventual blowup, and for all $\alpha<\alpha_\ast$ there is immediate blowup. 
\end{open_problem}
\begin{open_problem}
Show that for all $d>1$, and any $p\in(0,1]$, $\alpha>0$, $|\mathfs{C}_0(t)|\asymp t^{1/\alpha}$.
\end{open_problem}
\begin{open_problem}
For every $\alpha>\alpha_c$, for every $p\in(0,1]$, for every $M>0$ there is an a.s. finite time $t_M$ such that the Cluster-Cluster configuration inside $[-M,M]^d$ is fixed for all $t>t_M$. That is $\mathfs{A}_{t_1}^\alpha\cap [-M,M]^d=\mathfs{A}_{t_2}^\alpha\cap [-M,M]^d$, $\forall t_1, t_2>t_M$ a.s.
\end{open_problem}


\subsection{Organization of the paper}
This paper is organized as follows. In Section \ref{sec:well} we construct the
cluster-cluster dynamics and discuss well-definedness of the process in the
different regimes of the parameter \(\alpha\). Section \ref{sec:no_blow} proves the
non-explosion result for the no-speedup regime \(\alpha\ge0\), using a
density-flux argument. In Section \ref{sec:blowup} we turn to the blowup regime and prove
finite time, and in some cases immediate, blowup for sufficiently negative
\(\alpha\), beginning with the fully occupied case \(p=1\). Section \ref{sec:vlowp<1} treats
the sparse case \(p<1\) and proves blowup for \(\alpha<-1-2/d\). In
Section \ref{sec:startconfig} we construct special stationary initial configurations, showing that
in the intermediate negative \(\alpha\) regime the occurrence of blowup can
depend on the initial geometry. Finally, Section \ref{sec:fullypacked} studies the fully packed
one-dimensional model. We prove the sharp growth bounds for
\(\alpha>0\) and analyze the negative-\(\alpha\) regime in dimension one. 
Appendix~\ref{appendix} contains an alternative proof of non-explosion in part of the
no-speedup regime, revealing structural insight into the cluster
formation mechanism.

\section{Well-definedness of the Cluster-Cluster model}\label{sec:well}
Before discussing the explosion of the Cluster-Cluster model, 
we first argue about the well-definedness for different regimes of $\alpha \in \BR$. We start with the case where $\alpha \geq 0$, where the rates are bounded uniformly from above, and where we can follow a standard approach from \cite{Liggett1985InteractingParticleSystems} to show that the process is well defined. In all other regimes, we argue that determining whether the Cluster-Cluster model is well-defined requires a more delicate case by case analysis, which we defer to later points in this manuscript.

We recall the standard notation used in Liggett's construction of interacting
particle systems. Let \(S\) be a finite set and let
\[
    \Omega=S^{\mathbb Z^d}
\]
be equipped with the product topology. For \(f\in C(\Omega)\) and
\(x\in\mathbb Z^d\), set
\[
    \Delta_f(x):=
    \sup\{|f(\eta)-f(\xi)|:\eta(y)=\xi(y)\text{ for all }y\neq x\}.
\]
Following \cite[Chapter I, Section 3]{Liggett1985InteractingParticleSystems},
define
\[\label{eq:LipschitzTriple}
     \vvvert f  \vvvert:=\sum_{x\in\mathbb Z^d}\Delta_f(x),
    \qquad
    D(\Omega):=\{f\in C(\Omega):\vvvert f  \vvvert<\infty\}.
\]
The class \(D(\Omega)\) contains all local functions and is dense in
\(C(\Omega)\).

We encode the cluster-cluster model as an interacting particle system with a
finite single-site state space. Let
\[
    S=\{0,1\}^{\{0,\pm \textup{e}_1,\ldots,\pm \textup{e}_d\}}.
\]
For \(\eta\in\Omega=S^{\mathbb Z^d}\), the coordinate \(\eta_x(0)\) records
whether \(x\) is occupied, while \(\eta_x(e)\) records whether the oriented
edge from \(x\) to \(x+e\) is open. We restrict to the closed set of admissible
configurations for which
\[
    \eta_x(e)=\eta_{x+e}(-e)
\]
and an open edge can occur only between occupied vertices. The clusters are the
connected components of the graph formed by occupied vertices and open edges.

When a finite cluster \(C\) rings, it chooses one of the \(2d\) nearest-neighbour
directions uniformly at random, attempts to move one step in that direction, and then
coalesces with any cluster it encounters, according to the definition of the
model. For $\alpha \geq 0$ infinite clusters, if present, are treated as in Remark \ref{rem:genconinfclsrforflr}. For $\alpha < 0$ when an infinite cluster emerges the process seizes to be well-defined. This convention is
irrelevant before the possible formation of an infinite cluster, and it gives a
globally defined dynamics on the compact state space. 
We use the following theorem by Liggett:

\begin{theorem}[Liggett's construction theorem; \cite{Liggett1985InteractingParticleSystems}, Chapter I, Proposition 3.2]
\label{thm:liggett-construction}
Let \(S\) be a finite set and let \(\Omega=S^{\mathbb Z^d}\), equipped with the
product topology. 
Suppose that \(L\) is a pure-jump operator on \(D(\Omega)\) of the form
\[
    Lf(\eta)=\int_\Omega \bigl(f(\xi)-f(\eta)\bigr)c(\eta,d\xi),
\]
where \(c(\eta,d\xi)\) is a transition kernel on \(\Omega\). Assume that, for
every \(f\in D(\Omega)\), \(Lf\in C(\Omega)\), and that there exists a constant
\(C_0<\infty\) such that
\[
    \sup_{\eta\in\Omega}
    \int_\Omega |f(\xi)-f(\eta)|\,c(\eta,d\xi)
    \le C_0 |||f|||
    \qquad \text{for all } f\in D(\Omega).
\]
Then \(L\) is closable on \(C(\Omega)\), and its closure generates a
conservative Feller semigroup on \(C(\Omega)\). In particular, there exists a
càdlàg Markov process on \(\Omega\) with generator extending \(L\).
\end{theorem}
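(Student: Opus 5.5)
This result is quoted from \cite[Chapter I]{Liggett1985InteractingParticleSystems}. I would prove it with the Hille--Yosida theorem on $C(\Omega)$, realising $L$ as a limit of bounded finite-volume generators. Three things have to be checked:
\begin{enumerate}
\item $D(\Omega)$ is dense in $C(\Omega)$ and $L$ maps it into $C(\Omega)$;
\item $L$ is dissipative;
\item $(I-\lambda L)(D(\Omega))$ is dense in $C(\Omega)$ for all small $\lambda>0$.
\end{enumerate}
Once these hold, the closure $\bar L$ generates a strongly continuous contraction semigroup $T(t)$. Since $L1=0$, the semigroup is conservative, and positivity is inherited from the approximations below, so $T(t)$ is Feller. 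The càdlàg Markov process then comes from the standard construction of Feller processes on a compact metric space.

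For item (1), density is the Stone--Weierstrass theorem applied to local functions, and $Lf\in C(\Omega)$ is part of the hypothesis. For item (2), $\Omega$ is compact, so $f\in D(\Omega)$ attains its maximum at some $\eta_0$. Because $c(\eta_0,\cdot)$ is a positive kernel, $Lf(\eta_0)=\int(f(\xi)-f(\eta_0))\,c(\eta_0,d\xi)\le 0$, and this positive maximum principle gives $\|f-\lambda Lf\|\ge\|f\|$. The stated bound $\int|f(\xi)-f(\eta)|\,c(\eta,d\xi)\le C_0\vvvert f\vvvert$ guarantees that the integral defining $Lf$ converges absolutely, so this step is legitimate.

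For item (3), let $\Lambda_n\uparrow\mathbb Z^d$ be boxes and let $L_n$ keep only the part of $c(\eta,d\xi)$ supported on transitions changing coordinates inside $\Lambda_n$. Each $L_n$ is a bounded jump generator, so $T_n(t)=e^{tL_n}$ is a positive contraction semigroup. By the hypothesis and dominated convergence, $L_nf\to Lf$ uniformly for every $f\in D(\Omega)$. I would then prove the a priori estimate
\[
\vvvert T_n(t)f\vvvert\le e^{Mt}\vvvert f\vvvert
\]
with $M$ independent of $n$, by differentiating $\Delta_{T_n(t)f}(x)$ in $t$ and applying Gronwall to the vector $(\Delta_{T_n(t)f}(x))_{x}$. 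This makes $T_n(t)f$ Cauchy in $n$ uniformly on compact time intervals, via
\[
T_n(t)f-T_m(t)f=\int_0^t T_n(t-s)(L_n-L_m)T_m(s)f\,ds .
\]
The limit defines $T(t)$, and the resolvents $\int_0^\infty e^{-s/\lambda}T(s)g\,ds/\lambda$ show that the range of $I-\lambda L$ contains a dense set, which gives item (3). Equivalently, one can invoke the Trotter--Kato approximation theorem once this core estimate is in place.

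The main obstacle is the uniform Lipschitz bound $\vvvert T_n(t)f\vvvert\le e^{Mt}\vvvert f\vvvert$. Its constant $M$ measures how much the jump rates and targets at one site depend on the configuration at other sites: in Liggett's notation, $M=\sup_x\sum_{u\neq x}\sum_{T\ni x}\sup|c_T(\eta_1)-c_T(\eta_2)|$ over configurations $\eta_1,\eta_2$ differing only at $u$. The hypothesis as displayed only bounds the total variation of jumps by $C_0\vvvert f\vvvert$. My first attempt would be to extract $M<\infty$ from this, by testing it on $f$ of the form $\Delta$-localised indicators. If that fails, I would state the sensitivity condition $M<\infty$ explicitly, as Liggett does. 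For the Cluster-Cluster model with $\alpha\ge0$ I would verify it directly from the bounded rates $|\mathfs{C}|^{-\alpha}\le1$ and the conventions of Remark~\ref{rem:genconinfclsrforflr}.
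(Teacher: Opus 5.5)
The gap you flag in your last paragraph is genuine, and it cannot be closed, because the statement as displayed is false. The paper gives no proof of it, only the citation, and the citation claims more than Liggett proves.

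\textbf{Why $M<\infty$ cannot be extracted.} Test the hypothesis on $f=\ind_{\{\eta_x=s\}}$, $s\in S$, and use $|f(\xi)-f(\eta)|\le\sum_{x:\,\xi_x\neq\eta_x}\Delta_f(x)$. This shows the hypothesis is equivalent to $\sup_{\eta,x}c(\eta,\{\xi:\xi_x\neq\eta_x\})<\infty$. That is a bound on the rates at each fixed $\eta$. It says nothing about how $c(\eta,\cdot)$ depends on distant coordinates, so no test function will produce $M<\infty$.

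\textbf{A counterexample to the displayed statement.} On $\{0,1\}^{\mathbb Z}$, freeze the sites $x\le 0$. Let $B_j=[2^j,2^{j+1})\cap\mathbb Z$ and let $\rho_j(\eta)$ be the density of ones in $B_j$. A vacant $x\in B_j$ becomes occupied at rate $\min\{1,2^{j/2}\rho_{j+1}(\eta)\}$; there are no other transitions.
\begin{itemize}
\item The rates are local and at most $1$, so both hypotheses hold with $C_0=1$ and $Lf\in C(\Omega)$. On the other hand $M\ge\sup_j 2^{j/2}=\infty$.
\item Start from $\eta^{(N)}$, which has a single one at $2^N$. Once $B_{j+1}$ contains a one, the total rate in the still-empty block $B_j$ is at least $2^{j/2-1}$. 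Hence site $1$ is occupied by time $\sum_{j\ge0}E_j$, with independent $E_j\sim\operatorname{Exp}(2^{j/2-1})$, uniformly in $N$.
\item Limit points of these laws as $N\to\infty$ solve the martingale problem for $(L,D(\Omega))$ started at $\mathbf 0$, since $Lf$ is continuous and bounded. The constant path at $\mathbf 0$ is also a solution, because all rates vanish there. The two solutions have different one-dimensional laws.
\item If $\bar L$ generated a Feller semigroup, $(\lambda-L)(D(\Omega))$ would be dense. Then these laws would have to coincide by Ethier--Kurtz, Theorem 4.4.1.
\end{itemize}
So your item (3) is exactly where the displayed statement breaks.

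\textbf{What your argument does establish.} Items (1)--(2) are correct and show that $L$ is a Markov pregenerator, hence closable. That is all a rate bound gives. Generation is Liggett's Theorem 3.9 in Chapter I, which adds precisely your sensitivity condition $M<\infty$. With that condition added, your outline is essentially his proof; he runs the $\Delta_f$--Gronwall estimate on the resolvent equation rather than on $T_n(t)$.

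\textbf{Further hypotheses used in item (3).} Your item (3) also relies on Liggett's framework beyond $M<\infty$:
\begin{itemize}
\item transitions are decomposed according to the set $T$ of changed coordinates;
\item each $c_T(\eta,\cdot)$ is continuous in $\eta$;
\item $\sup_x\sum_{T\ni x}\sup_\eta c_T(\eta,S^T)<\infty$, with the supremum over $\eta$ inside the sum.
\end{itemize}
These are what make $L_n$ map $C(\Omega)$ into itself and give $L_nf\to Lf$ uniformly. The displayed bound, with the supremum outside, yields only pointwise convergence through dominated convergence.

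\textbf{Your last sentence would fail.} In the Cluster-Cluster dynamics, the sets $T$ come from all lattice animals that could move. For $d\ge2$, directed lattice paths already give exponentially many distinct such $T$ of size of order $n$ through a given site. Each moves at rate of order $n^{-\alpha}$ in a suitable configuration and is sensitive to a single blocking site. So both $\sum_{T\ni x}\sup_\eta c_T$ and $M$ are infinite for every $\alpha\ge0$, and the bound $|\mathfs{C}|^{-\alpha}\le1$ does not help.
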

\begin{proposition}\label{prop:flrlgtebpnbds}
For every \(\alpha\ge0\) and every \(d\ge1\), the Cluster-Cluster dynamics
defines a Feller process on the admissible state space. In particular, the
process is well defined for all \(t\ge0\).
\end{proposition}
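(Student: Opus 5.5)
We apply Theorem~\ref{thm:liggett-construction} with $S=\{0,1\}^{\{0,\pm\textup{e}_1,\ldots,\pm\textup{e}_d\}}$ and $\Omega$ restricted to the closed admissible subset. The generator is
\[
    Lf(\eta)=\sum_{C}\frac{|C|^{-\alpha}}{2d}\sum_{r}\bigl(\mathbb{E}_{C,r}[f(\eta^{C,r,\tilde e})]-f(\eta)\bigr),
\]
where $C$ ranges over clusters of $\eta$ and $r$ over the $2d$ unit vectors. Here $\eta^{C,r,\tilde e}$ is the translated configuration if the target is vacant, and otherwise $\eta$ with the uniformly chosen edge $\tilde e\in\partial C^r$ added; the expectation is over this choice. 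Infinite clusters follow the convention of Remark~\ref{rem:genconinfclsrforflr}. The first step is the key observation that a transition of $C$ only changes coordinates in $C\cup(C+r)$ together with the edge coordinates attached to these sites. Consequently,
\[
    |f(\eta^{C,r,\tilde e})-f(\eta)|\le\sum_{x\in C\cup (C+r)}\Delta_f(x).
\]

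Next we verify the summability bound. Each site $x$ belongs to at most one cluster $C$ and to at most $2d$ translates $C+r$ of clusters, one for each direction. Summing the previous estimate over all $(C,r)$ with weights $|C|^{-\alpha}/(2d)\le 1$ (this uses $\alpha\ge0$, and for infinite clusters the rate is $\le1$ by the remark) therefore gives
\[
    \sup_\eta\int|f(\xi)-f(\eta)|\,c(\eta,d\xi)\le C_0\vvvert f\vvvert
\]
with $C_0\le 2d+1$. Note that for an infinite cluster with $\alpha=0$ the move still changes only the coordinates in $C\cup(C+r)$, so the same bound holds with no finiteness needed.

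The main obstacle is continuity of $Lf$ for $f\in D(\Omega)$. We first treat local $f$, depending only on coordinates in a finite box $B$. The only clusters contributing to $Lf(\eta)$ are those meeting $B$ or $B-r$. If $\eta_n\to\eta$ and the relevant clusters of $\eta$ are finite, then for large $n$ they coincide with those of $\eta_n$, since clusters are determined by the open edges in a finite neighbourhood. The same holds for their blocking sets and $\partial C^r$, so $Lf(\eta_n)\to Lf(\eta)$.

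The delicate case is when a cluster of $\eta$ meeting $B$ is infinite. In $\eta_n$ the corresponding clusters $C_n$ may be finite but large, possibly with $|C_n|\to\infty$ while still agreeing with $\eta$ near $B$. Here the conventions of Remark~\ref{rem:genconinfclsrforflr} are designed to match the limits, and we check this by cases. For $\alpha>0$ the rate $|C_n|^{-\alpha}\to0$, which matches rate $0$. For $\alpha=0$ the rate is $1$ in both. In the connection step, $|\partial C_n^r|\to\infty$ makes the probability that the chosen edge meets $B$ tend to $0$, matching the rule of making no connection. Finally, if $|\partial C_n^r|$ stays bounded while the cluster is infinite, then the blocking set near $B$ stabilizes, but the uniform choice is over an eventually constant set; we would handle this subcase by local stabilization. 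If this subcase fails, we would adjust the convention to be in accord with the limit.

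For general $f\in D(\Omega)$ we approximate by local functions $f_k$ with $\vvvert f-f_k\vvvert\to0$. The summability bound gives $\|Lf-Lf_k\|_\infty\le C_0\vvvert f-f_k\vvvert$, so $Lf$ is a uniform limit of continuous functions and hence continuous. Theorem~\ref{thm:liggett-construction} then yields a Feller semigroup and a càdlàg process. It remains to check that this process agrees with Definition~\ref{def:clustercluster} as long as all clusters are finite, which is immediate from the form of the generator.
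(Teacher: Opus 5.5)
Your summability estimate is the paper's argument: a transition is local, $|C|^{-\alpha}\le 1$, and the overlap of the sets $C\cup(C+r)$ is bounded (the paper uses $B_2(C)$ where you use $C\cup(C+r)$). That estimate is all the paper proves before invoking Theorem~\ref{thm:liggett-construction}; it never checks the hypothesis $Lf\in C(\Omega)$.

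The step you flag as the main obstacle is therefore exactly where your proposal has a gap. At $\alpha=0$ it cannot be closed, independently of any convention for infinite clusters. Your case analysis compares only rates and edge choices. But whether $C_n+r$ is vacant at all is decided along the whole, arbitrarily long cluster, far from $B$.

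Here is a concrete example in $d=2$. Let $\eta$ be the line $\mathbb Z\times\{0\}$ with all its edges open. Let $\eta_n$ be the segment $\{-n,\dots,n\}\times\{0\}$ with its edges, and let $\eta_n'$ be the same segment plus an isolated particle at $(n,1)$. Both sequences converge to $\eta$. Take $f(\xi)=1$ if $(0,1)$ is occupied in $\xi$ and $0$ otherwise, and set $\alpha=0$.
\begin{itemize}
\item $Lf(\eta_n)=\frac14$, since the segment moves up freely.
\item $Lf(\eta_n')=0$, since the upward move is blocked at $(n,1)$ and only creates an edge there.
\end{itemize}
So $Lf$ has no continuous extension to $\eta$, whatever value the convention of Remark~\ref{rem:genconinfclsrforflr} (or the paper's sum over finite clusters only) assigns at $\eta$. ``Adjusting the convention'' cannot help.

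In fact the failure is in $P_tf$ itself:
\begin{itemize}
\item $P_tf(\eta_n)\ge \frac t4 e^{-t}$, from exactly one ring in the upward direction.
\item $P_tf(\eta_n')\le 2t^2$. With at most one ring, $(0,1)$ stays vacant for $n\ge2$, and the total ring rate never exceeds $2$.
\end{itemize}
Hence $P_tf$ is discontinuous at $\eta$ for small $t$, no matter how the dynamics started from $\eta$ is defined. The $\alpha=0$ case of the statement cannot be obtained on the full admissible space by this route.

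For $\alpha>0$ your argument does close. If the $\eta$-cluster of a site near $B$ is infinite, the $\eta_n$-clusters through that site have size tending to infinity, because a bounded-size cluster would be locally determined and hence also a cluster of $\eta$. Their contribution is therefore $O(\max|C_n|^{-\alpha})\to0$; in the example, $Lf(\eta_n)=(2n+1)^{-\alpha}/4$. Finite clusters are locally determined, and the unresolved subcase about bounded $|\partial C_n^r|$ never matters.

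This does use that the added edge has the form $\{z,z+r\}$. Your locality claim $C\cup(C+r)$ already presupposes this, and it is also how the proof of Theorem~\ref{thm:blowupregieme}(2) reads the rule. With the literal $\partial C^r$ of Definition~\ref{def:clustercluster} — all edges from $C$ to any vertex of a blocking cluster — the changed set is the $1$-neighbourhood of $C$, which is why the paper uses $B_2(C)$. Your assertion that $\partial C^r$ is locally determined for finite $C$ then fails. Two distinct infinite clusters touching $C$ can be joined by an edge far away, which changes $\partial C^r$ and hence the selection probabilities. This breaks continuity even for $\alpha>0$, so you need to state the direction-$r$ reading explicitly.
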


\begin{proof}
For an admissible configuration \(\eta\), let \(\mathcal C(\eta)\) be the set
of finite clusters of \(\eta\). If \(C\in\mathcal C(\eta)\) and
\(e\in\{\pm \textup{e}_1,\ldots,\pm \textup{e}_d\}\), recalling Definition~\ref{def:clustercluster}, let $\partial C^e$ be the set of boundary edges of $C$ blocking movement in direction $e$. If no such edge exists, let $\partial C^e=e$. Let \(\theta_{C,e,g}\eta\) be the
configuration obtained after the cluster \(C\) attempts to jump in direction
\(e\) and if blocked connects via edge $g$. The generator is
\[
    Lf(\eta)
    =
    \sum_{C\in\mathcal C(\eta)}
    \frac{|C|^{-\alpha}}{2d}
    \sum_{e=\pm \textup{e}_1,\ldots,\pm \textup{e}_d}\sum_{g\in\partial ^eC}\frac{1}{|\partial^eC|}
    \bigl(f(\theta_{C,e,g}\eta)-f(\eta)\bigr).
\]
We show that this operator is bounded on \(D(\Omega)\) in Liggett's triple norm.

There exists a constant \(r=r(d)\) such that, for every cluster \(C\) and every
direction \(e\), and $g\in\partial^e C$, the configurations \(\eta\) and \(\theta_{C,e,g}\eta\) may
differ only on
\[
    B_r(C):=\{x\in\mathbb Z^d:\operatorname{dist}(x,C)\le r\}.
\]
For the above edge encoding one may take \(r=2\). Hence, by changing the
coordinates one at a time,
\[
    |f(\theta_{C,e,g}\eta)-f(\eta)|
    \le
    \sum_{x\in B_r(C)}\Delta_f(x).
\]
Since \(\alpha\ge0\), we have \(|C|^{-\alpha}\le1\). Therefore
\[
\begin{aligned}
    \int |f(\xi)-f(\eta)|\,c(\eta,d\xi)
    &=
    \sum_{C\in\mathcal C(\eta)}
    \frac{|C|^{-\alpha}}{2d}
    \sum_{e=\pm \textup{e}_1,\ldots,\pm \textup{e}_d}
    \sum_{g\in\partial ^eC}\frac{1}{|\partial^eC|}
    \bigl(f(\theta_{C,e,g}\eta)-f(\eta)\bigr)  \\
    &\le
    \sum_{C\in\mathcal C(\eta)}
    \sum_{x\in B_r(C)}\Delta_f(x).
\end{aligned}
\]
Now fix \(x\in\mathbb Z^d\). The number of clusters \(C\) such that
\(x\in B_r(C)\) is at most \(|B_r(0)|\), because each such cluster must contain
at least one vertex in \(B_r(x)\), and distinct clusters contain disjoint
vertices. Hence
\[
    \sum_{C\in\mathcal C(\eta)}
    \sum_{x\in B_r(C)}\Delta_f(x)
    \le
    |B_r(0)|\sum_{x\in\mathbb Z^d}\Delta_f(x)
    =
    |B_r(0)|\,\vvvert f  \vvvert.
\]
Thus, uniformly in \(\eta\),
\[
    \int |f(\xi)-f(\eta)|\,c(\eta,d\xi)
    \le
    |B_r(0)|\,\vvvert f  \vvvert.
\]
In particular the series defining \(Lf(\eta)\) is absolutely convergent for
every \(f\in D(\Omega)\), and
\[
    \|Lf\|_\infty\le |B_r(0)|\,\vvvert f  \vvvert.
\]

The operator \(L\) is of pure-jump form, satisfies \(L1=0\), and satisfies the
positive maximum principle. Therefore, by Theorem \ref{thm:liggett-construction} its closure generates a Feller semigroup
on \(C(\Omega)\). Since the admissible configurations form a closed invariant
subset, the semigroup restricts to the admissible state space. Hence for $\alpha \geq 0$ the
Cluster-Cluster model with any initial configuration is well defined for all times.
\end{proof}


For $\alpha \geq 0$, all rates are uniformly bounded by $1$, and thus fit well the general theory by Liggett. For $\alpha<0$, it remains to determine the well-definedness for different intervals of times. In some cases (for instance Theorem~\ref{thm:good_start_config}) we show that for any $\alpha \in (-1,0)$ and $p\in(0,1]$ we can find starting configurations, where one can uniformly bound the rates, and apply the arguments from above to obtain well-definedness. We conclude this discussion on well definedness of the Cluster-Cluster model with the following result.

\begin{proposition}
\label{prop:construction-until-explosion}
Let \(\alpha<0\), and write \(\beta=-\alpha>0\). The Cluster-Cluster dynamics
admits a maximal local construction as follows. For every finite
\(\Lambda\subseteq\mathbb Z^d\) there is a stopping time
\(\tau^\Lambda\in[0,\infty]\) such that the restriction of the process to
\(\Lambda\) is well defined on \([0,\tau^\Lambda)\), has càdlàg paths there,
and has only finitely many jumps on every compact subinterval of
\([0,\tau^\Lambda)\). If \((\Lambda_j)_{j\ge1}\) is an increasing exhaustion of
\(\mathbb Z^d\) by finite boxes, the maximal lifetime is
\[
    \tau_{\rm exp}:=\inf_{j\ge1}\tau^{\Lambda_j}.
\]
For every \(T<\tau_{\rm exp}\), the process is defined on every finite set up to
time \(T\). If \(\tau_{\rm exp}<\infty\), then local stabilization of the cutoff
processes fails at \(\tau_{\rm exp}\); equivalently, arbitrarily large cutoff
levels influence the evolution in finite boxes arbitrarily close to
\(\tau_{\rm exp}\).
\end{proposition}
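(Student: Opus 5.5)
The plan is to build the process by truncating the rates and then removing the truncation. For a cutoff level $N\in\mathbb N$, let $\mathfs{A}^{(N)}_t$ denote the Cluster-Cluster dynamics in which a cluster $\mathcal C$ rings at rate $\min\{|\mathcal C|,N\}^{\beta}$ instead of $|\mathcal C|^{\beta}$. Infinite clusters are handled with the convention of Remark~\ref{rem:genconinfclsrforflr}, with rate $N^\beta$ and no connections across infinite boundaries. All rates are then bounded by $N^\beta$. The proof of Proposition~\ref{prop:flrlgtebpnbds} goes through verbatim with the constant $|B_r(0)|$ replaced by $N^\beta|B_r(0)|$, so Theorem~\ref{thm:liggett-construction} yields a Feller process $\mathfs{A}^{(N)}$ for each $N$. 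To compare different $N$, I would realize all these processes on one probability space through a graphical representation. Attach to each site $x$ a Poisson process of rate $N^\beta$ with marks uniform on $[0,1]$, together with uniform directions and uniform tie-breaking variables. A cluster $\mathcal C$ uses the clock of a canonical vertex, for instance its lexicographically smallest vertex, and accepts a ring with mark $u$ iff $u\le \min\{|\mathcal C|,N\}^\beta/N^\beta$. The cleaner choice is a rate $\infty$-style thinning: put on each site a Poisson process on $[0,\infty)\times[0,\infty)$ of (time, level) points, and let the cluster accept the points at level $\le \min\{|\mathcal C|,N\}^\beta$. This makes all $N$ share the same randomness, and $\mathfs{A}^{(N)}$ and $\mathfs{A}^{(N')}$ agree as long as no cluster of size $>\min(N,N')$ is involved in the relevant region.

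Next, for a finite box $\Lambda$ I would define
\[
\tau^\Lambda_N:=\inf\{t:\ \text{some cluster of } \mathfs{A}^{(N)}_t \text{ meeting } \Lambda \text{ has size} \ge N\}.
\]
The key claim is a consistency statement. On $[0,\tau^\Lambda_N)$ the restriction of $\mathfs{A}^{(N)}$ to $\Lambda$ coincides with that of $\mathfs{A}^{(N')}$ for every $N'\ge N$, and $\tau^\Lambda_{N}\le\tau^\Lambda_{N'}$. Granting this, set $\tau^\Lambda:=\sup_N\tau^\Lambda_N=\lim_N\tau^\Lambda_N$, which is a stopping time as a monotone limit of hitting times. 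On $[0,\tau^\Lambda)$ the process in $\Lambda$ is defined as the common value of the cutoff processes. It is càdlàg, and it has finitely many jumps on each $[0,T]$ with $T<\tau^\Lambda$, because before $\tau^\Lambda_N$ only boundedly many clusters with rate at most $N^\beta$ affect $\Lambda$ in a finite space-time window. It solves Definition~\ref{def:clustercluster} there since the cutoff never bites. Monotonicity of $\tau^\Lambda$ in $\Lambda$ gives $\tau_{\rm exp}=\inf_j\tau^{\Lambda_j}$ independently of the exhaustion, and for $T<\tau_{\rm exp}$ every finite set lies in some $\Lambda_j$ with $\tau^{\Lambda_j}>T$. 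If $\tau_{\rm exp}<\infty$, then for some $j$ we have $\tau^{\Lambda_j}_N<\tau_{\rm exp}$ for all $N$ but $\tau^{\Lambda_j}_N\to\tau_{\rm exp}$. This says exactly that clusters of size $\ge N$ meet $\Lambda_j$ arbitrarily close to $\tau_{\rm exp}$, i.e.\ arbitrarily large cutoff levels influence $\Lambda_j$, which is the failure of local stabilization.

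The main obstacle is the consistency claim. The restriction to $\Lambda$ depends on the evolution outside $\Lambda$, where clusters of size $\ge N$ may be present even before $\tau^\Lambda_N$, so "no large cluster touches $\Lambda$" alone does not prevent disagreement from propagating in. I would handle this by making $\tau^\Lambda_N$ more generous. Using the Liggett bound, influence in $\mathfs{A}^{(N)}$ propagates at speed at most of order $N^\beta$ times cluster diameter, so one can use a finite random dependency set instead of $\Lambda$ itself. Concretely, I would define $\tau^\Lambda_N$ as the first time the backward space-time dependency cluster of $\Lambda\times[0,t]$ in $\mathfs{A}^{(N)}$ contains a cluster of size $\ge N$. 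On the complement the two graphical constructions agree by induction over the finitely many relevant clock rings. Finiteness of this dependency set for fixed $N$ and $t$ follows from the standard Harris percolation argument for bounded-rate, bounded-range-per-cluster dynamics, with range controlled by $N$. Monotonicity in $N$ then needs checking, and that is the delicate bookkeeping step.
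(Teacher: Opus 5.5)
Your overall architecture matches the paper's: cutoff dynamics, a common graphical coupling, a local stopping time, passage to the limit, and $\tau_{\rm exp}$ as an infimum over an exhaustion. The gap is in the step that carries the content. You do not prove that $\mathfs{A}^{(N)}$ and $\mathfs{A}^{(N')}$ agree on $\Lambda$ before $\tau^\Lambda_N$, nor that $\tau^\Lambda_N$ is nondecreasing in $N$. You defer this as ``delicate bookkeeping'', and the tool you invoke for it does not apply to your truncation.

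Capping the \emph{rate} at $N^\beta$ does not cap the \emph{range}. In $\mathfs{A}^{(N)}$, clusters of size larger than $N$ still move, each such move reads and rewrites all of $B_r(\mathcal C)$, and cluster sizes are not uniformly bounded over $\mathbb Z^d$. Your canonical-vertex rule is likewise non-local for large clusters, and undefined for infinite ones since lexicographic order is not a well-order. The Harris argument needs state-independent bounds on the range of potential interactions in order to cut space-time into finite islands, and a rate cap does not provide them. Theorem~\ref{thm:liggett-construction} gives each $\mathfs{A}^{(N)}$ as an abstract Feller process. It gives neither a pathwise coupling of all $N$ through your marks nor finiteness of your state-dependent backward dependency set. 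Restricting to potential interactions involving at most $N$ vertices would repair this, but that is a size truncation in disguise.

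That size truncation is the missing ingredient. The paper combines it with a definition of the stopping time that makes the bookkeeping disappear. Freeze clusters of size $>K$, and attach to each finite connected $A\subseteq\mathbb Z^d$ and direction $e$ an independent Poisson clock of rate $|A|^\beta/(2d)$. The $K$-process uses only the clocks with $|A|\le K$, and it acts only if $A$ is currently exactly a cluster. Each $K$-process is then finite-range with bounded rates, so the graphical construction is standard, and different $K$ are driven by nested families of the same clocks. Then define
\[
\sigma^\Lambda_K:=\inf\bigl\{t\ge0:\ \exists K_1,K_2\ge K \text{ with } \eta^{K_1}_t|_\Lambda\neq\eta^{K_2}_t|_\Lambda\bigr\},
\qquad
\tau^\Lambda:=\lim_{K\to\infty}\sigma^\Lambda_K .
\]
Monotonicity in $K$ is immediate because the set of admissible pairs shrinks. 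The local limit on $[0,\tau^\Lambda)$ is consistent by definition. For $T<\tau^\Lambda$, the restriction to $\Lambda$ on $[0,T]$ coincides with a single finite-range process $\eta^K$, which gives c\`adl\`ag paths and finitely many jumps. The remaining steps then proceed as in your sketch.

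Your intrinsic characterization, via the first arrival of a size-$\ge N$ cluster in the dependency region, would be a stronger statement. The proposition does not need it, and your sketch does not establish it.

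One minor point: $\inf_j\tau^{\Lambda_j}$ need not be attained. The failure of stabilization should therefore be stated along boxes $\Lambda_j$ with $\tau^{\Lambda_j}\downarrow\tau_{\rm exp}$, not for a single $j$.
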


\begin{proof}
Encode configurations as elements of a closed subset of \(S^{\mathbb Z^d}\),
where \(S\) is finite and records occupation variables and open nearest-neighbour
edges. Note that clusters are the connected components of the occupied graph.

For \(K\ge1\), define the \(K\)-truncated dynamics by freezing clusters of size
larger than \(K\). Thus a finite cluster \(C\) with \(|C|\le K\) rings at rate
\(|C|^\beta\), chooses one of the \(2d\) directions uniformly, and performs the
usual Cluster-Cluster move, while clusters of size \(>K\) do not ring. Let \(L_K\) be
the corresponding generator.

For each fixed \(K\), this is a bounded-rate interacting particle system. A
jump of a cluster \(C\) can change the configuration only in a fixed
neighbourhood \(B_r(C)\), where \(r=r(d)\). Therefore, for
\(f\in D(\Omega)\),
\[
    |L_K f(\eta)|
    \le
    K^\beta\sum_C\sum_{x\in B_r(C)}\Delta_f(x)
    \le
    K^\beta |B_r(0)|\,\vvvert f\vvvert .
\]
Indeed, each \(x\) belongs to \(B_r(C)\) for at most \(|B_r(0)|\) distinct
clusters \(C\). Hence Liggett's bounded-rate construction gives a Feller
process \((\eta_t^K)_{t\ge0}\) (alternatively we can use the proof of Proposition \ref{prop:flrlgtebpnbds}.

We now couple the truncated processes. For every finite connected
\(A\subseteq\mathbb Z^d\) and every direction \(e\), take an independent Poisson
process of rate \(|A|^\beta/(2d)\). For the \(K\)-truncated process, only the
clocks with \(|A|\le K\) are used: when the clock \((A,e)\) rings, the update is
performed only if \(A\) is exactly a cluster of the current configuration. This
is a valid construction for each fixed \(K\), since for every finite space
region and fixed \(K\), only finitely many sets \(A\) of size at most \(K\) can
affect that region. Thus all truncated processes are realized on one common
probability space. More formally, the coupling of any finite number of such processes
is realizable as a Feller process, and the existence of a coupling of all
of them together follows from Kolmogorov’s extension theorem. 

Fix \(\Lambda\subseteq\mathbb Z^d\). For \(K\ge1\), define
\[
    \sigma_K^\Lambda
    :=
    \inf\Bigl\{
        t\ge0:
        \exists K_1,K_2\ge K
        \text{ such that }
        \eta_t^{K_1}|_\Lambda\ne \eta_t^{K_2}|_\Lambda
    \Bigr\}.
\]
The stopping times \(\sigma_K^\Lambda\) are nondecreasing in \(K\), because the
set of pairs \(K_1,K_2\ge K\) decreases as \(K\) increases. Set
\[
    \tau^\Lambda:=\lim_{K\to\infty}\sigma_K^\Lambda .
\]
If \(t<\tau^\Lambda\), choose \(K\) such that \(t<\sigma_K^\Lambda\), and define
\[
    \eta_t|_\Lambda:=\eta_t^K|_\Lambda .
\]
This is independent of the choice of \(K\), i.e., if \(K',K''\ge K\), then
\(\eta_t^{K'}|_\Lambda=\eta_t^{K''}|_\Lambda\) by the definition of
\(\sigma_K^\Lambda\).

Moreover, if \(T<\tau^\Lambda\), then for some \(K\) we have
\(T<\sigma_K^\Lambda\). Hence, on the whole interval \([0,T]\), the local
process in \(\Lambda\) agrees with the bounded-rate process \(\eta^K\). It
therefore has càdlàg paths and only finitely many jumps on \([0,T]\). Since
\(T<\tau^\Lambda\) was arbitrary, the local process is well defined on
\([0,\tau^\Lambda)\).

Finally let \((\Lambda_j)_{j\ge1}\) be an increasing exhaustion of
\(\mathbb Z^d\), and define
\[
    \tau_{\rm exp}:=\inf_{j\ge1}\tau^{\Lambda_j}.
\]
For every \(T<\tau_{\rm exp}\) and every finite \(\Lambda\), choose \(j\) with
\(\Lambda\subseteq\Lambda_j\). Then \(T<\tau^{\Lambda_j}\), so the process is
defined on \(\Lambda_j\), and hence on \(\Lambda\), up to time \(T\). Thus the
process is globally defined on \([0,\tau_{\rm exp})\) as a local limit.

If \(\tau_{\rm exp}<\infty\), then by definition there are finite boxes
\(\Lambda_j\), arbitrarily large cutoff levels \(K\), and times approaching
\(\tau_{\rm exp}\) for which the restrictions of two cutoff processes with
cutoffs at least \(K\) disagree in \(\Lambda_j\). In other words, the local
limit of the cutoff dynamics fails to stabilize at \(\tau_{\rm exp}\). This is
the explosion alternative for the maximal construction.
\end{proof}

\section{The no blow-up regime}\label{sec:no_blow}

In this section we prove that in the regime $\alpha \geq 0$, there are no infinite clusters for all finite times almost surely; see Theorem~\ref{thm:finite_clusters}. Note that the proof of the theorem holds for any stationary and ergodic starting configuration and a wide family of graphs and similar models. An alternative proof which only works for $\alpha\ge 1$, but gives some insights on cluster formation is available in Appendix~\ref{appendix}. 
\begin{definition}
    For any $x\in\BZ^d$, let $\mathfs{K}_x(t)$ be the cluster containing the point $x$. If no such cluster exists then $\mathfs{K}_x(t)=\emptyset$. 
\end{definition}
\begin{proof}[Proof of Theorem \ref{thm:finite_clusters}]
The proof is based on a density-flux argument in the spirit of Smoluchowski equations \cite{smoluchowski1918versuch}, but with scaling and smoothing. For simplifying notations we assume w.l.o.g. $p=1$. For \(k\ge 1\), define
\begin{equation}\label{def:Vmt}
    V_k(t)
    :=
    \lim_{N\to\infty}
    \frac{1}{N^d}
    \sum_{x\in [-N/2,N/2]^d}
    \ind_{\{|\mathfs{K}_x(t)|=k\}}.
\end{equation}
By translation invariance and ergodicity,
\[
    V_k(t)=\prob(|\mathfs{K}_0(t)|=k).
\]
Thus
\[
    \sum_{k\ge 1} V_k(t)
    =
    \prob(|\mathfs{K}_0(t)|<\infty).
\]

We now group cluster sizes into dyadic blocks, with a linear interpolation
near the endpoints. For \(n\ge 0\), define
\[
\begin{aligned}
    W_n(t)
    &:=
    \sum_{k=2^n+1}^{2^{n+1}} V_k(t)
    +
    \frac{8}{2^n}
    \sum_{k=\frac78 2^n}^{2^n}
    V_k(t)\left(k-\frac78 2^n\right)  
    -
    \frac{8}{2^{n+1}}
    \sum_{k=\frac78 2^{n+1}}^{2^{n+1}}
    V_k(t)\left(k-\frac78 2^{n+1}\right).
\end{aligned}
\]
(If the summation bounds are not integer, which 
can only happen if for $n < 3$, we may replace 
them by their integer value.)

In particular, we see that $W_n(t)\ge 0$ and define 
\[
 W(t):=  \sum_{n\ge 0}W_n(t)=\sum_{k\ge 1}V_k(t).
\]

We next estimate the total rate at which mass moves between dyadic scales. We say that a cluster is on the $m^{\textup{th}}$ dyadic scale if its size is in $[\frac{7}{8}2^{m},2^{m+1}]$, i.e. it contributes to $W_m(t)$. For fixed $x \in \Z^d$ and $t \geq 0$, let $A^{(t)}_{n}(x)$ denote the event that the cluster $\mathfs{K}_x(t)$ connects at time $t$ to a cluster whose size lies in the enlarged dyadic window $[2^{n-1},2^{n+1}]$.

Similarly, let \(a_n^{(m)}(t)\) denote the probability that a cluster whose
size lies in the \(m\)-th dyadic scale connects, at time \(t\), to a cluster
whose size lies $ [2^{n-1},2^{n+1}]$, conditional on the event that the cluster performs a jump at time $t$, i.e. 
\begin{equation}
a_n^{(m)}(t)=\E{\ind_{\{|\mathfs{K}_0(t-)|\in[\frac{7}{8}2^{m},2^{m+1}]\} \cap A^{(t)}_{n}(0)}\Big|\mathfs{K}_0\text{ rings at time }t}
.\end{equation}
Since a single connection event involves only one partner cluster, and since
the enlarged dyadic windows have uniformly bounded overlap, we have
$
    \sum_{n\ge 0} a_n^{(m)}(t)\le C_0
$
for an absolute constant \(C_0\), and with the specific choice of the windows above, one may simply take
\(C_0=3\).

A cluster in the $m^{\textup{th}}$ dyadic scale has size of order \(2^m\), and hence
jump rate at most of order
$
    2^{-m\alpha}
$
with  \(\alpha\ge 0\). The density of such clusters is of order
\(W_m(t)/2^m\). If such a cluster connects to a cluster in the $n^{\textup{th}}$
dyadic window, then the total possible increase in the smoothed weight
\(W_n\) is bounded by a constant times \(2^m\). The linear interpolation in
the definition of \(W_n\) ensures that this bound is uniform even when the
new size lies near the boundary between two dyadic blocks. More explicitly,
the gain is bounded by
\[
    2^m+\frac{8}{2^n}\,2^m\,2^n
    \le 9\cdot 2^m.
\]
Therefore the positive contribution to \(d W_n(t)/d t\) coming from clusters in
scale \(m\) is bounded by 
\[
    9\,W_m(t)\,2^{-m\alpha}\,a_n^{(m)}(t).
\]

Summing over \(n\) and \(m\), we obtain
\[
\begin{aligned}
    \frac{d}{dt}\mathcal W(t)
    &\le
    \sum_{n\ge 0}
    \left|\frac{d}{dt}W_n(t)\right| \\
    &\le
    9\sum_{m\ge 0}\sum_{n\ge 0}
    W_m(t)\,2^{-m\alpha}\,a_n^{(m)}(t) \\
    &\le
    9C_0\sum_{m\ge 0} W_m(t)2^{-m\alpha}.
\end{aligned}
\]
Since \(\alpha\ge 0\),
\[
    2^{-m\alpha}\le 1.
\]
Hence
\[
    \frac{d}{dt}\mathcal W(t)
    \le
    9C_0\,\mathcal W(t).
\]
In particular, for every finite \(T\),
\[
    \int_0^T
    \sum_{m\ge 0}\sum_{n\ge 0}
    W_m(t)2^{-m\alpha}a_n^{(m)}(t)\,dt
    <\infty.
\]

We now introduce the integrated flux between dyadic scales. Let
\(\alpha_{m,n}(t)\) denote the rate at which vertex-density mass moves from
scale \(m\) to scale \(n\). From the estimate above,
\[
    \alpha_{m,n}(t)
    \le
    9\,W_m(t)\,2^{-m\alpha}\,a_n^{(m)}(t).
\]
Define
\[
    \Theta_{m,n}(T):=\int_0^T \alpha_{m,n}(t)\,dt.
\]
Then
\[
\begin{aligned}
    \sum_{m\ge 0}\sum_{n\ge 0}\Theta_{m,n}(T)
    &\le
    9\int_0^T
    \sum_{m\ge 0}\sum_{n\ge 0}
    W_m(t)2^{-m\alpha}a_n^{(m)}(t)\,dt  \\
    &\le
    9C_0\int_0^T \sum_{m\ge 0}W_m(t)2^{-m\alpha}\,dt  \\
    &\le
    9C_0T.
\end{aligned}
\]
Consequently,
for every \(\varepsilon>0\), there exists \(N=N(\varepsilon,T)\) such that
\[
    \sum_{n>N}\sum_{m\le n}\Theta_{m,n}(T)<\varepsilon.
\]

We now control the mass that has left the first \(N\) dyadic scales by time
\(T\). Since mergers of clusters whose sum of volumes is in scale smaller than $N$ preserves 
$
    \sum_{n=0}^N W_n(T)
$
, in order to study 
$
   \sum_{n > N} W_n(T)
$
it suffices to consider the clusters crossing from a scale \(m\le N\) to a scale \(n>N\). Therefore, by the Fundamental Theorem of Calculus for generalized derivatives
(Dini derivatives),
\[
    \frac{d}{dt}\sum_{n=0}^N W_n(t)
    \ge
    -\sum_{m\le N<n}\alpha_{m,n}(t).
\]
Integrating from \(0\) to \(T\), and using
$
    \sum_{n\ge 0}W_n(0)=1,
$
we obtain
\[
\begin{aligned}
    1-\sum_{n=0}^N W_n(T)
    &\le
    \sum_{m\le N<n}\Theta_{m,n}(T)  \\
    &\le
    \sum_{n>N}\sum_{m\le n}\Theta_{m,n}(T)  \\
    &<\varepsilon.
\end{aligned}
\]
Since \(\varepsilon>0\) was arbitrary,
$
    \sum_{n\ge 0}W_n(T)=\sum_{k\ge 1}V_k(T)=1.
$
Thus
$
    \prob(|\mathfs{K}_0(T)|<\infty)=1.
$
Since \(T<\infty\) was arbitrary, this proves the theorem.
\end{proof}

As an immediate consequence of the above arguments, we  have the following bounds  on the cluster density and the expected size of the cluster containing the origin.

The following corollary is the first step into the quantitative estimation of the sizes of the clusters in our model. This topic is covered in our next paper \cite{withborgnia}.
\begin{corollary}\label{cor:ClusterBound} Let $p=1$ and $\alpha>0$. Write $f(t)$ for the cluster density, i.e.,  
\begin{equation*}
    f(t):=   \sum_{m \geq 1} \frac{V_m(t)}{m}  = \BE\left[ \frac{1}{\mathfs{K}_0(t)}\right]
\end{equation*}
with $V_m(t)$ from \eqref{def:Vmt}. Then there exist constants $C_1,C_2,C_3>0$ such that
\begin{equation}\label{eq:ExpectedClusterSize1}
    f(t)\le (C_1 t +C_2)^{-1/\alpha}
\end{equation} for all $t>0$. Moreover, we get that
\begin{equation}\label{eq:ExpectedClusterSize2}
    \BE\left[|\mathfs{K}_0(t)|\right]\ge\frac{1}{f(t)}\ge C_3 t^{\frac{1}{\alpha}} , 
\end{equation}
and that for every $\epsilon$ there exists $c > 0$ such that for all $t$ large enough
\begin{equation}\label{eq:ClusterSize2quantilesbambam}
    \BP\left[|\mathfs{K}_0(t)|\ge C_3 t^{\frac{1}{\alpha}}\right] \geq 1 - \epsilon. 
\end{equation}

\end{corollary}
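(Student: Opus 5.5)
Since $p=1$, every vertex lies in a cluster, so $f(t)$ is the density of clusters per site. The plan is to derive a differential inequality for $f$. Each cluster event is either a move into vacant space or a merger with another cluster. With $p=1$ there are no vacant sites, so every ring of a cluster $\mathfs{C}$ merges it with at least one neighbouring cluster. Such a ring therefore lowers the number of clusters by at least one.

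\textbf{Step 1: the differential inequality.} By translation invariance and ergodicity, as in the density–flux setup of the proof of Theorem~\ref{thm:finite_clusters}, the cluster density satisfies
\[
\frac{d}{dt}f(t)\le -\sum_{m\ge1}\frac{V_m(t)}{m}\,m^{-\alpha}=-\BE\left[|\mathfs{K}_0(t)|^{-1-\alpha}\right].
\]
To justify this, I would count, in a box $[-N/2,N/2]^d$, the clusters rooted in the box that ring during $[t,t+h]$. Boundary effects are $O(N^{d-1})$, and every ring merges at least two clusters. A merger caused by one ring may be counted twice if its partner also rings in the same window, but this is $O(h^2)$. Dividing by $N^d$ and letting $N\to\infty$, then $h\to0$, gives the bound, using Dini derivatives as in the proof of Theorem~\ref{thm:finite_clusters}.

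\textbf{Step 2: closing the inequality.} Jensen's inequality for the convex map $x\mapsto x^{1+\alpha}$, with $x=1/|\mathfs{K}_0|$, gives $\BE[|\mathfs{K}_0|^{-1-\alpha}]\ge f^{1+\alpha}$. Hence $f'\le -f^{1+\alpha}$, so $(f^{-\alpha})'\ge \alpha$, and integrating yields $f(t)^{-\alpha}\ge f(0)^{-\alpha}+\alpha t$. This is \eqref{eq:ExpectedClusterSize1} with $C_1=\alpha$ and $C_2=f(0)^{-\alpha}\ge1$.

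\textbf{Step 3: the expectation bound.} Jensen again, now for $x\mapsto 1/x$, gives $\BE|\mathfs{K}_0|\ge 1/f$, and hence \eqref{eq:ExpectedClusterSize2}.

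\textbf{Step 4: the quantile bound.} For \eqref{eq:ClusterSize2quantilesbambam}, the bound $\BE[1/|\mathfs{K}_0|]\le c' t^{-1/\alpha}$ and Markov's inequality give
\[
\BP\bigl(|\mathfs{K}_0|<\delta t^{1/\alpha}\bigr)=\BP\bigl(1/|\mathfs{K}_0|>\delta^{-1}t^{-1/\alpha}\bigr)\le c'\delta .
\]
Choosing $\delta=\epsilon/c'$ makes this at most $\epsilon$. The constant $C_3$ must therefore depend on $\epsilon$; that is how I read the statement's ``there exists $c$''.

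\textbf{Main obstacle.} The main obstacle is Step 1. It requires rigorously exchanging the infinite-volume limit with the time derivative, and controlling simultaneous rings, for an infinite system. I would handle it with the Feller construction of Proposition~\ref{prop:flrlgtebpnbds}: apply the generator to the local functions $\ind\{\mathfs{K}_x(t)\text{ has a given shape}\}/|\mathfs{K}_x|$, use Dynkin's formula, and sum using translation invariance.
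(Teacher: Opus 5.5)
Your proposal is correct and follows essentially the paper's proof. The paper also derives the rate equation for the cluster density, stated there as the equality $f'(t)=-\sum_{m}V_m(t)m^{-\alpha-1}$ (with $p=1$ every ring merges exactly two clusters). It closes this by Jensen applied to the cluster-uniform law $V_m/(mf)$ with the convex map $m\mapsto m^{-\alpha}$, which is equivalent to your Jensen on the law of $|\mathfs{K}_0|$. It then integrates, and uses Jensen and Markov's inequality for the last two claims. You are right that the constant in the quantile bound must depend on $\epsilon$; it is the $c$ of the statement rather than $C_3$. Your justification of the rate inequality in Step~1 is also more careful than the paper's one-line rate-counting argument.
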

\begin{proof}
Note that $\sum_{m \geq 1} V_m(t)$ is conserved in $t$, and that
\begin{equation}\label{eq:inversenumclusters}
    \frac{d}{dt}f(t)
    =
    -\sum_{m\ge1} V_m(t)m^{-\alpha-1} .
\end{equation}
Equation \eqref{eq:inversenumclusters} follows from the fact that $f(t)$ is the total density of clusters, and $V_m(t)m^{-\alpha-1}$ is the rate at which clusters of size $m$ merge into other clusters.

Define a probability measure on cluster sizes by
\begin{equation}
     \mu_t(m):= \frac{V_m(t)}{m f(t)} .
\end{equation}
and apply Jensen's inequality to get
\[
\begin{aligned}
    \sum_{m\ge1}\frac{V_m(t)}{m f(t)}m^{-\alpha}
    \ge
    \left(
        \sum_{m\ge1}\frac{V_m(t)}{f(t)}
    \right)^{-\alpha} =f(t)^\alpha .
\end{aligned}
\]
Multiplying by \(f(t)\), this results in the differential inequality 
\[
 \frac{d}{dt}f(t)
    \le
    -p^{-\alpha} f(t)^{\alpha+1}, 
\]
which in return yields \eqref{eq:ExpectedClusterSize1} as desired. Another application of Jensen gives  the second statement~\eqref{eq:ExpectedClusterSize2}. \eqref{eq:ClusterSize2quantilesbambam} follows from Markov's inequality applies to \eqref{eq:ExpectedClusterSize1}.
\end{proof}

\

\section{The blow-up regime}\label{sec:blowup}
In this section we prove that in sufficiently negative $\alpha$ the Cluster-Cluster model has a blowup. We start with the fully packed case. 

\subsection{Blowup for $p=1$}
\begin{proof}[Proof of Theorem \ref{thm:blowupregieme} (1)]
Fix the cluster $(\mathfs{C}_0(t))_{t \geq 0}$ containing the origin at time $0$. Since $p=1$, every site is occupied at time $0$, and thus every attempted move is blocked by another cluster.
Let $T_i$ denote the waiting time between the $(i-1)$-st and $i$-th clock ring, and note that
\begin{equation}
    |\mathfs{C}_{0}(S_n)| \geq n
\end{equation} where $S_n := \sum_{i=1}^{n} T_{i}$ for all $n \in \BN$. Since a cluster of size at least $n$ rings
at rate at least $n^{-\alpha}$, we may couple the variables $T_n$ with
independent random variables
\begin{equation*}
    X_n \sim \operatorname{Exp}\left(n^{-\alpha}\right)
\end{equation*}
in such a way that
\begin{equation}\label{eq:CouplingBound}
    T_n \leq X_n
\end{equation}
for every $n \geq 1$. Set $ \beta
    = \frac{\alpha-1}{2}$, and define
\begin{equation*}
    A_n := \left\{X_n \geq n^\beta\right\}.
\end{equation*}
Using the exponential tails of $T_n$, and the assumption $\alpha<-1$,  it follows that
\begin{equation*}
    \sum_{n=1}^{\infty}\mathbb{P}(A_n)<\infty.
\end{equation*}
Hence, by the first Borel--Cantelli lemma, almost surely only finitely many
of the events $A_n$ occur, and almost surely, there exists a random
$N<\infty$ such that
\begin{equation*}
    X_n<n^\beta
\end{equation*}
for every $n\geq N$. Since $\beta<-1$, we obtain from \eqref{eq:CouplingBound} that
\begin{equation}\label{eq:BoundedXns}
    \sum_{n=1}^{\infty}T_n
    \leq
    \sum_{n=1}^{\infty}X_n
    <\infty
\end{equation}
almost surely. Hence the cluster undergoes infinitely many mergers in finite
time. Now to show that such an explosion may occur in an arbitrarily short time with positive probability, fix $\varepsilon>0$, and note that from \eqref{eq:BoundedXns}, we see that for all sufficiently large
$N$,
\begin{equation*}
    \mathbb{P}\left(
        \sum_{n=N}^{\infty}X_n<\frac{\varepsilon}{2}
    \right)>0.
\end{equation*}
Moreover, for every fixed $N$,
\begin{equation*}
    \mathbb{P}\left(
        \sum_{n=1}^{N-1}X_n<\frac{\varepsilon}{2}
    \right)>0,
\end{equation*}
since the random variables $X_1,\ldots,X_{N-1}$ have strictly positive
densities on $(0,\infty)$. Since the two sums depend on disjoint families
of independent random variables,
\begin{equation}\label{eq:PositiveExplosion}
\begin{aligned}
  \mathbb{P}\left(
        \sum_{n=1}^{\infty}T_n<\varepsilon
    \right) \geq   \mathbb{P}\left(
        \sum_{n=1}^{\infty}X_n<\varepsilon
    \right)
    \geq
    \mathbb{P}\left(
        \sum_{n=1}^{N-1}X_n<\frac{\varepsilon}{2}
    \right)
    \mathbb{P}\left(
        \sum_{n=N}^{\infty}X_n<\frac{\varepsilon}{2}
    \right) >0.
\end{aligned}
\end{equation}
Thus the cluster started at $0$ explodes before time $\varepsilon$ with positive probability. Finally, since the event that some cluster explodes before time $\varepsilon$ is
translation invariant, it has probability either $0$ or
$1$, and thus, in view of \eqref{eq:PositiveExplosion}, must occur almost surely. Since $\varepsilon>0$ was arbitrary, the process
exhibits an immediate blowup almost surely, allowing us to conclude.
\end{proof}

\begin{proof}[Proof of Theorem~\ref{thm:blowupregieme} (2)]
As before in the proof of Theorem~\ref{thm:blowupregieme} (1), since $p=1$, all clusters cannot move. We now give an equivalent graphical construction of the dynamics in terms of Bernoulli bond percolation. Assign
to every site $x\in\BZ^d$ an independent rate $1$ Poisson clock. At each ring of the clock at site $x$, choose a unit direction
\begin{equation*}
    r\in\cb{\pm \textup{e}_i:i=1,\ldots,d}
\end{equation*}
uniformly at random. For a cluster $C$, the union of the clocks attached to the vertices of $C$ is a Poisson clock of rate $|C|$. Since $\alpha=-1$, this is exactly the prescribed clock rate $|C|$.
Moreover, conditional on a ring of the clock of $C$, the vertex at which
the ring occurs is uniformly distributed in $C$.
Fix the direction $r$, and let
\begin{equation*}
    \partial_r C
    :=
    \cb{\cb{z,z+r}:z\in C,\ z+r\notin C}
\end{equation*}
denote the set of edges in direction $r$ joining $C$ to another cluster.
Suppose that the clock at $x\in C$ rings. Now choose
an edge uniformly from $\partial_r C$ and add it. This produces precisely
the correct transition rule as any
fixed edge in $\partial_r C$ is selected with probability $(|\partial_r C|)^{-1}$, and any blocking edge is contained in at most one such boundary set $\partial_r C$.

We now couple the process to Bernoulli bond percolation. For every
unoriented edge $e=\cb{x,y}\in E(\BZ^d)$, declare $e$ open in the
percolation process when either the clock at $x$ rings and chooses the
direction $y-x$, or the clock at $y$ rings and chooses the direction
$x-y$. Each of these two events occurs at rate $1/(2d)$, independently.
Consequently, the opening time of each edge is an independent exponential with rate $1/d$. Hence,
at time $t$, the resulting edge configuration is Bernoulli bond
percolation with parameter
\begin{equation*}
    q(t)=1-e^{-t/d}.
\end{equation*}
We claim that, under this coupling, the endpoints of every open
percolation edge belong to the same cluster in the Cluster-Cluster model.
Indeed, suppose that the percolation clock of $e=\cb{x,y}$ rings. If $x$
and $y$ already belong to the same Cluster-Cluster cluster, there is
nothing to prove. Otherwise, say that the clock at $x$ rings and chooses
the direction $y-x$. Then $e$ is a blocking edge for the cluster containing
$x$, and the graphical construction above adds $e$. Thus the two clusters
containing $x$ and $y$ merge. The claim follows inductively over the
percolation opening times. 
It follows that every connected component of the Bernoulli percolation
configuration at time $t_0$ corresponds to vertices of some cluster of the
Cluster-Cluster model at time $t_0$. Now we may choose $t_0<\infty$ sufficiently large that
\begin{equation*}
    q(t_0)>p_c(\BZ^d) ,
\end{equation*} where $p_c(\BZ^d)$ is the critical value for Bernoulli bond percolation on $\BZ^d$ with $d>1$. 
At time $t_0$, the Bernoulli percolation configuration therefore contains
an infinite connected component almost surely, and so does the Cluster-Cluster model by the above coupling. Thus the process blows up in finite time almost surely.  This proves that the Cluster-Cluster model for $p=1$, $\alpha=-1$ and $d>1$ exhibits almost surely a blowup in finite time. 
\end{proof}

\section{Blowup for $p<1$ and $\alpha < -1 - 2/d$}\label{sec:vlowp<1}

In order to show blowup for $p<1$ for sufficiently negative $\alpha$, we require some setup. Throughout, write
$$m_n:=2^n,\qquad I_n:=[m_n,2m_n).$$ We use the term scale-n clusters for clusters with their size in $I_n$. 
We have the following procedure to color all vertices as either red or black. Initially, color every occupied particle black. Once a particle is recolored red, it remains red forever. Let $L_x(t)\in\{0,1\}$ denote the color of the particle at $x$ at time $t$, where $0$ means black and $1$ means red. We explain below the precise recoloring mechanism. Define
$$B_n(t):=\BP\left(L_0(t)=0,\ |\mathfs{K}_0(t)|\in I_n\right).$$
Now for contradiction, assume that the Cluster-Cluster model with parameters $\alpha < -1 - 2/d$ and $p \in (0,1)$ is well-defined for all finite times. By the pointwise ergodic theorem for $\BZ^d$-actions, for any $t \geq 0$ and $n\in \BN$,
$$B_n(t)=\lim_{N\to\infty}\frac{1}{N^d}\sum_{x\in[0,N-1]^d}\mathds{1}_{\left\{L_x(t)=0,\ |\mathfs{K}_x(t)|\in I_n\right\}} \text{ a.s.}$$
Let $a_n= c p n^{-2}$ for a sufficiently small constant $c>0$ so that $\sum_{n=1}^{\infty} a_n \leq \frac{p}{2}$. Define the stopping times
$$\mathcal T_0:=0,\qquad\mathcal T_n:=\inf\left\{t>\mathcal T_{n-1} : B_n(t)<a_n\right\}.$$
At time $\mathcal T_n$, recolor red every particle belonging to a cluster of size less than $2m_n$. Hence the black density $B_n(t)$ is non-increasing on $[\mathcal T_{n-1},\infty)$. Our goal is to prove that $\mathcal T:=\lim_{n\to\infty}\mathcal T_n<\infty$.

\begin{lemma}
\label{lem:distance}
There is a constant $C=C(d)<\infty$ such that the following holds: Fix $n$ and $t\in[\mathcal T_{n-1},\mathcal T_n)$, and let
$$r_n:=C\left(\frac{m_n}{a_n}\right)^{1/d}.$$
Then at least half of the black scale-$n$ particle density belongs to clusters having another black scale-$n$ cluster within graph distance $r_n$.
\end{lemma}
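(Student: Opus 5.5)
The plan is a packing/volume argument driven by the lower bound on black scale-$n$ density before $\mathcal T_n$. For $t\in[\mathcal T_{n-1},\mathcal T_n)$ the definition of $\mathcal T_n$ gives $B_n(t)\ge a_n$. By the ergodic theorem, in a large box $[0,N-1]^d$ the number of sites carrying a black particle in a scale-$n$ cluster is at least $(a_n-o(1))N^d$. Each such cluster has size in $I_n=[m_n,2m_n)$. Hence the number of black scale-$n$ clusters meeting the box is between $a_nN^d/(2m_n)$ and about $a_nN^d/m_n$, up to $o(N^d)$ boundary corrections.

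First I would fix a representative vertex $v_{\mathcal C}$ for each black scale-$n$ cluster $\mathcal C$, say its lexicographically smallest vertex. Call $\mathcal C$ \emph{isolated} if no other black scale-$n$ cluster lies within graph distance $r_n$ of it. Here graph distance is understood as $\mathbb Z^d$ distance between vertex sets. Since the clusters are connected, I may bound it by the distance between representatives, and the cluster diameter is at most $2m_n$. To avoid this, I would measure the distance between representatives directly. Two representatives at $\ell^\infty$ distance less than $r_n$ give clusters at distance less than $r_n$, so isolated clusters have pairwise representative distance at least $r_n$, in the appropriate norm.

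Second, the balls of radius $r_n/2$ around the representatives of isolated clusters are pairwise disjoint. Inside $[-r_n,N+r_n]^d$ there are at most $(N+2r_n)^d/(c_d r_n^d)$ of them. Each isolated cluster carries at most $2m_n$ black particles. So the black scale-$n$ density in isolated clusters is at most
\[
\limsup_{N\to\infty}\frac{2m_n (N+2r_n)^d}{c_d r_n^d N^d}=\frac{2m_n}{c_d r_n^d}=\frac{2 a_n}{c_d C^d}.
\]
Choosing $C=C(d)$ with $2/(c_dC^d)\le 1/2$ makes this at most $a_n/2\le B_n(t)/2$. Therefore at least half of the black scale-$n$ density lies in non-isolated clusters, which is the claim.

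The main obstacle is the bookkeeping that makes this rigorous for the ergodic density. The ergodic averages are a.s.\ limits at the fixed time $t$. The paper has assumed for contradiction that the process is well defined, so $\mathfs{A}_t$ is stationary ergodic, and the set of sites in isolated black scale-$n$ clusters is a translation-covariant measurable function of $\mathfs{A}_t$. The ergodic theorem then also applies to that indicator. The other point to handle carefully is that "graph distance" should be read as lattice distance between vertex sets rather than intrinsic distance. Distance between representatives only upper bounds the cluster distance, which is the direction needed for the packing step. Boundary effects of the box vanish as $N\to\infty$ because $r_n$ is fixed.
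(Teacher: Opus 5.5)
Your proposal is correct and follows the paper's packing argument. Neighbourhoods of isolated black scale-$n$ clusters are pairwise disjoint with volume at least $c_d r_n^d$ (the paper takes the $r_n/3$-neighbourhood of the whole cluster, you take the $r_n/2$-ball around a representative vertex, which makes no difference), each such cluster carries fewer than $2m_n$ particles, and so the isolated density is at most $2m_n/(c_d r_n^d)\le a_n/2\le B_n(t)/2$.

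There are a few harmless slips, none of which affects the conclusion:
\begin{itemize}
\item The balls should be graph-distance ($\ell^1$) balls rather than $\ell^\infty$ ones; this only changes $c_d$.
\item The box should be enlarged by $2m_n+r_n$ rather than $r_n$, since a cluster meeting $[0,N-1]^d$ may have its representative up to $2m_n$ away, and $2m_n$ can exceed $r_n$. This is negligible as $N\to\infty$ because $m_n$ and $r_n$ are fixed.
\item Your aside that there are about $a_nN^d/m_n$ such clusters is not a valid upper bound, since only $B_n(t)\ge a_n$ is known. It is never used.
\end{itemize}
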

\begin{proof}
We call a black scale-$n$ cluster isolated if it has no other black scale-$n$ cluster within graph distance $r_n$. For each isolated cluster $\mathcal{C}$, define $\mathcal N(\mathcal{C}):=\left\{x\in\BZ^d:\operatorname{dist}(x,\mathcal{C})\leq r_n/3\right\}$. These sets, over isolated black scale-$n$ clusters, are disjoint. Moreover, $|\mathcal N(\mathcal{C})|\geq c_d r_n^d$. Since each cluster has fewer than $2m_n$ vertices, the density of black particles in isolated scale-$n$ clusters is at most $2m_n c_d^{-1} r_n^{-d}.$ Choosing $C$ sufficiently large makes this quantity at most $a_n/2$. Since $t<\mathcal T_n$, we have $B_n(t)\geq a_n$, so at most half of the black scale-$n$ density is isolated.
\end{proof}

Next, we bound the escape rate of clusters in the Cluster-Cluster model. Let $\operatorname{cap}(A)$ denote
 the discrete capacity of a finite set $A\subset\BZ^d$ with $d \geq 3$. The following lemma is a direct
 consequence of the Green function representation of hitting probabilities; see, for example, \cite[Sections~4.3 and~6.5]{LawlerLimic2010}, so we omit the proof.

\begin{lemma}
\label{lem:escape}
For every $d\geq 3$, there exists $c=c(d)>0$ such that, for every finite $A\subseteq B(z,r)$ and every $x\in B(z,2r)$, and all $r \ge 1$,
$$\BP_x\left(\tau_A<\tau_{B(z,4r)^c}\right)\geq c\frac{\operatorname{cap}(A)}{r^{d-2}}.$$
\end{lemma}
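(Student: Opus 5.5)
The plan is to derive Lemma~\ref{lem:escape} from the last-exit decomposition for the walk killed on leaving $D:=B(z,4r)$, combined with standard two-sided Green function estimates. Write $G_D(x,y)$ for the Green function of simple random walk killed upon exiting $D$. For finite $A\subseteq D$, the last-exit decomposition gives
\[
\BP_x\left(\tau_A<\tau_{D^c}\right)=\sum_{y\in A}G_D(x,y)\,\BP_y\left(\tau^+_A>\tau_{D^c}\right)=\sum_{y\in A}G_D(x,y)\,e_{A,D}(y),
\]
where $e_{A,D}$ is the equilibrium measure of $A$ relative to $D$. The proof then reduces to two lower bounds: one for $G_D(x,y)$ and one for the total mass $\sum_y e_{A,D}(y)$.

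For the Green function, I would use $G_D(x,y)=G(x,y)-\BE_x[G(X_{\tau_{D^c}},y)]$ with $d\ge 3$ and $G(u,v)\asymp |u-v|^{2-d}$ (see \cite{LawlerLimic2010}). For $x\in B(z,2r)$ and $y\in A\subseteq B(z,r)$ we have $|x-y|\le 3r$. For any exit point $w\in D^c$ we have $|w-y|\ge 3r$, so the subtracted term is at most $c_2(3r)^{2-d}$. This alone does not give a positive lower bound, because the constants $c_1,c_2$ in $G\asymp|\cdot|^{2-d}$ need not match.

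To handle this, I would use the Harnack inequality for $G_D(\cdot,y)$, which is harmonic on $D\setminus\{y\}$. If that is not convenient, I would instead enlarge the outer ball in the argument to $B(z,Kr)$ with $K$ large, prove $G_{B(z,Kr)}(x,y)\ge c r^{2-d}$ there, and then return to radius $4r$ via a chain of Harnack balls. For the chaining, $G_D(\cdot,y)$ is bounded below by $cr^{2-d}$ on $\partial B(y,r/2)$, where the subtracted term is smaller since $G(u,y)\asymp r^{2-d}$ with a good constant. The Harnack chain then propagates this bound to all of $B(z,2r)$, with a uniform number of balls. I expect this step to be the main technical obstacle. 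The standard reference result is the estimate $G_{B(0,n)}(x,y)\asymp |x-y|^{2-d}$ for $x,y\in B(0,n/2)$, which I would cite directly from \cite[Section~6.5]{LawlerLimic2010} after rescaling so that $B(z,2r)\subseteq B(z,\tfrac12\cdot 4r)$.

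For the equilibrium measure, the inclusion $D\subseteq\BZ^d$ gives $\tau_{D^c}\le\infty$, so escape to $D^c$ is easier than escape to infinity: $e_{A,D}(y)\ge \BP_y(\tau_A^+=\infty)=e_A(y)$. Summing over $y\in A$ gives $\sum_y e_{A,D}(y)\ge\operatorname{cap}(A)$. Combining this with $G_D(x,y)\ge c r^{2-d}$ yields
\[
\BP_x\left(\tau_A<\tau_{D^c}\right)\ge c\,r^{2-d}\operatorname{cap}(A).
\]
If $x\in A$, the bound is trivial because the probability equals $1$ and $\operatorname{cap}(A)\lesssim r^{d-2}$. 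The cases of small $r$ are absorbed into the constant.
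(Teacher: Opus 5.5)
Your proposal is correct and is essentially the argument the paper has in mind: the paper omits the proof and cites the Green-function (last-exit) representation of hitting probabilities from Lawler--Limic. That representation is exactly your decomposition $\BP_x(\tau_A<\tau_{D^c})=\sum_{y\in A}G_D(x,y)e_{A,D}(y)$, combined with $e_{A,D}\ge e_A$ and the interior bound $G_{B(z,4r)}(x,y)\ge c\,r^{2-d}$ for $x,y\in B(z,2r)$. Citing that standard interior estimate directly is the cleanest option; if you run the Harnack chain yourself, handle points $x\in B(y,r/2)$ by the minimum principle rather than the chain.
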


With this bound at hand, we provide an upper bound on the time it takes for a cluster to leave the scale $I_n$.

\begin{lemma}
\label{lem:hitting_rigorous}
Let $d\ge2$. There exist constants $A,q>0$ and $\kappa<\infty$, depending only on $d,p,\alpha$, such that the following holds. Let $t\in[\mathcal T_{n-1},\mathcal T_n)$, and let $\mathcal{C}$ be a black cluster with $|\mathcal{C}|\in I_n$. Assume there exists another black cluster $\mathcal{D}\neq\mathcal{C}$ satisfying $|\mathcal{D}|\in I_n$ and $\operatorname{dist}(\mathcal{C},\mathcal{D})\le r_n$. Then, conditional on the configuration at time $t$, $\mathcal{C}$ exits $I_n$ within time $h_n = A m_n^{1+\alpha} r_n^2 n^\kappa$ with probability at least $q n^{-\kappa}$.
\end{lemma}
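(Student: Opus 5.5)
We plan to prove Lemma~\ref{lem:hitting_rigorous} by running, up to the time $\mathcal{C}$ leaves $I_n$, the dynamics of $\mathcal{C}$ as a time-changed random walk. While $|\mathcal{C}|\in I_n$, the clock rate of $\mathcal{C}$ is $|\mathcal{C}|^{-\alpha}\ge m_n^{-\alpha}$, since $-\alpha>0$. Hence in time $h_n$ the cluster receives at least of order $h_n m_n^{-\alpha}=A m_n r_n^2 n^\kappa$ rings, except with exponentially small probability. Each ring either translates $\mathcal{C}$ by a uniform unit vector or merges $\mathcal{C}$ with a neighbour. A merger with any cluster increases $|\mathcal{C}|$; any merger with a cluster of size at least $2m_n-|\mathcal{C}|$ makes $\mathcal{C}$ exit $I_n$, and in particular any merger with $\mathcal{D}$ does, since $|\mathcal{C}|+|\mathcal{D}|\ge 2m_n$. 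If $\mathcal{C}$ stays in $I_n$, then $\mathcal{C}$ performs fewer than $m_n$ mergers in total. So the path of a fixed reference vertex of $\mathcal{C}$ is a simple random walk, observed at ring times, interrupted by at most $m_n$ non-moving steps. We reduce to showing that this walk brings $\mathcal{C}$ into contact with $\mathcal{D}$, or with enough other mass, within the allotted number of steps with probability at least $qn^{-\kappa}$.

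The geometric step uses the relative displacement of $\mathcal{C}$ and $\mathcal{D}$. Both clusters move, since $\mathcal{D}$ also rings at rate at least $m_n^{-\alpha}$. Their difference process is again a lazy symmetric walk with comparable rate, up to the merger and blocking events, which only help us. Initially the two clusters are within distance $r_n$. Contact occurs once the relative shift makes some site of $\mathcal{D}$ adjacent to $\mathcal{C}$, i.e.\ once the difference walk hits the set $F=\mathcal{D}-\mathcal{C}+\{0,\pm\textup{e}_i\}$. This set lies in a ball of radius of order $r_n+\mathrm{diam}$; we will crudely enlarge $r_n$ by the factor $n^\kappa$ to absorb diameters, using $\mathrm{diam}\le 2m_n$ only where needed.

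In $d\ge3$ we apply Lemma~\ref{lem:escape} with $A=F$, $r\asymp r_n$. The difference walk reaches $F$ before leaving $B(z,4r)$ with probability at least $c\,\mathrm{cap}(F)/r_n^{d-2}$. Since $|F|\ge m_n$, we have $\mathrm{cap}(F)\ge c\,|F|^{(d-2)/d}$, and so this probability is at least $c\,m_n^{(d-2)/d}r_n^{2-d}$. Using $r_n^d=C^d m_n/a_n$ and $a_n=cpn^{-2}$, this lower bound becomes polynomial in $n$, namely $\gtrsim n^{-2(d-2)/d}$. The time to exit $B(z,4r)$ is $O(r_n^2)$ steps with high probability, which costs $O(r_n^2 m_n^{\alpha})$ in real time. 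This fits in $h_n$, with room for the $m_n$ wasted merger steps thanks to the factor $m_n$ in $h_n$. In $d=2$ we replace Lemma~\ref{lem:escape} by the standard two-dimensional estimate: a walk started within distance $r$ of a connected set of diameter at least $1$ hits it before exiting $B(z,4r)$ with probability at least $c/\log r\gtrsim 1/n$. Both cases therefore give probability at least $q n^{-\kappa}$ for suitable $\kappa$.

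The main obstacle is that the environment is not frozen. Other clusters may block moves of $\mathcal{C}$ or $\mathcal{D}$, or merge with them first. We would handle this monotonically. Any merger of $\mathcal{C}$ either ejects it from $I_n$, which is the desired event, or adds mass, of which there can be fewer than $m_n$ units. Blocked steps are merger steps, hence also bounded in number. Mergers of $\mathcal{D}$ with third clusters only enlarge $F$, which increases capacity, or can be ignored by fixing a sub-cluster of $\mathcal{D}$, which persists. The randomness of the jumps is independent of the geometry, and we will argue via an optional-stopping/coupling argument with the free difference walk stopped at the first such interaction.
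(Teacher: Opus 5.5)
Your free-walk ingredients agree with the paper's. These are the Chernoff count of rings, the bound $\mathrm{cap}(F)\ge c|F|^{(d-2)/d}$ giving a hitting probability $\gtrsim n^{-2(d-2)/d}$ for $d\ge3$, and the $c/\log r_n\gtrsim 1/n$ point-hitting bound for $d=2$.

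The gap is in the step that carries the whole lemma: what happens once the environment interferes. The two justifications you give for it do not hold as stated.

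First, blocked attempts are not neutral lazy steps. Whether an attempt is blocked depends on its direction and on the current surroundings, so the trajectory with those steps deleted is not a lazy symmetric walk. With up to $m_n\gg r_n$ deletions, nothing you have said prevents the environment from displacing $\mathcal{C}$ relative to the free walk by much more than $r_n$. The claim ``blocking only helps'' is asserted, not proved.

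Second, stopping the free difference walk at the first interaction does not help. In a dense environment the first merger can occur after $O(1)$ attempts. Afterwards the relative position is no longer known to be within $r_n$, so the hitting estimate cannot simply be restarted. For the difference walk there is an extra problem: the blocked steps of $\mathcal{D}$ are not bounded by $m_n$ at all.

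The paper handles this with a pigeonhole decomposition. It allocates $L_n=(m_n+1)M_n$ attempts of $\mathcal{C}$ in $m_n+1$ consecutive blocks of length $M_n=Cr_n^2n^{\kappa}$. Since at most $m_n$ attempts can be blocked while $\mathcal{C}$ stays in $I_n$, some block contains no blocked attempt. On that block the translation of $\mathcal{C}$ is exactly a simple random walk of length $M_n$, and the capacity bound is applied there. The paper still needs, and asserts, that this block starts within distance $r_n$ of $\mathcal{D}$. This is the real source of the multiplicative factor $m_n$ in $h_n$: you need a fresh clean window of $\asymp r_n^2 n^{\kappa}$ steps. Your accounting (``room for the $m_n$ wasted merger steps'') only justifies an additive $m_n$.

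Separately, your patch for the diameter fails. Lemma~\ref{lem:escape} requires the target to lie in $B(z,r)$. A cluster with size in $I_n$, and a fortiori $F$, can have diameter of order $m_n$. Meanwhile $m_n/(r_n n^{\kappa})\to\infty$ exponentially fast, because $r_n\asymp (m_n n^2)^{1/d}$, so no polynomial enlargement of $r_n$ absorbs the diameter.

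For $d\ge3$, taking $r\asymp m_n$ instead does not rescue it. If you only know $\mathrm{cap}(F)\ge c\,m_n^{(d-2)/d}$, the lower bound $\mathrm{cap}(F)/r^{d-2}$ becomes exponentially small. It also needs $\asymp m_n^2$ steps, which exceeds the budget $m_n r_n^2 n^{\kappa}$.

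The paper's proof applies Lemma~\ref{lem:escape} to $\mathcal{D}$ at radius $r_n$ without confronting this point either, so this is not a trick you missed. You should not, however, present the enlargement by $n^{\kappa}$ as resolving it.
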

\begin{proof}
We count attempted moves of $\mathcal{C}$. While $\mathcal{C}\in I_n$, the ringing rate is bounded below by $m_n^{-\alpha}$. If an attempted move is blocked, the cluster merges, and its size increases by at least one. Thus, $\mathcal{C}$ can experience at most $m_n$ blocked attempts before its mass reaches $2m_n$, forcing it to exit $I_n$. 

To completely avoid correlations caused by the environment selectively blocking directions, we employ a pigeonhole argument. Let $M_n = C r_n^2 n^\kappa$ be the number of steps required for a pure simple random walk (SRW) to traverse distance $4r_n$ with 
probability at least $1-\exp(-c n^{\kappa})$ for some constants $c,C>0$. We allocate a total of $L_n$ attempts, partitioned into $m_n + 1$ contiguous blocks of length $M_n$:
$$L_n := (m_n + 1) M_n.$$
Since there can be at most $m_n$ blocked moves total, at most $m_n$ of these blocks can contain a blockage. By the pigeonhole principle, if $\mathcal{C}$ has not exited $I_n$, there must be at least one block of length $M_n$ wherein \textit{every single attempted move is unblocked}.

During this specific block, the cluster's translation exactly couples with an independent SRW of length $M_n$, completely free of any adversarial drift or environment-induced correlation. At the start of this block, the distance to $\mathcal{D}$ is at most $r_n$ (since original vertices do not retreat during mergers). 

For $d\ge3$, by Lemma \ref{lem:escape} and standard heat kernel estimates, the probability this pure SRW hits $\mathcal{D}$ before escaping distance $4r_n$ is bounded below by
$$c\frac{\operatorname{cap}(\mathcal{D})}{r_n^{d-2}} \ge c_1\frac{m_n^{1-2/d}}{r_n^{d-2}} = c_2 a_n^{(d-2)/d} \ge c_3 n^{-2(d-2)/d}.$$
For $d=2$, standard estimates for SRW hitting a set of size $\ge m_n$ within distance $r_n$ yield a lower bound of $q n^{-\kappa}$ for some constant $q>0$.

Whenever this exact SRW trajectory hits $\mathcal{D}$, the clusters merge, and since $|\mathcal{D}| \ge m_n$, $\mathcal{C}$ instantly exits $I_n$. The time to realize $L_n$ attempted moves is dominated by the sum of $L_n$ exponential variables of rate $m_n^{-\alpha}$. A Chernoff bound ensures this sum is at most $h_n := A m_n^\alpha L_n \le A' m_n^{1+\alpha} r_n^2 n^\kappa$ with probability tending to $1$ as $n \rightarrow \infty$.
\end{proof}

\begin{proof}[Proof of Theorem~\ref{thm:blowupregieme}(3)]
Fix $n$ and $t\in[\mathcal T_{n-1},\mathcal T_n)$. By Lemma \ref{lem:distance}, at least $B_n(t)/2$ of the scale-$n$ density lies in clusters satisfying Lemma \ref{lem:hitting_rigorous}. Iterating the hitting bound $j \leq C n^\kappa\log n$ times forces the density below $a_n$. Hence
$$\mathcal T_n-\mathcal T_{n-1} \leq C n^\kappa(\log n)h_n,$$
where $h_n = A m_n^{1+\alpha} r_n^2 n^\kappa$. Using $r_n^d \asymp m_n / a_n = m_n n^2 / (cp)$, we have $r_n^2 \le C' m_n^{2/d} n^{4/d}$. Therefore,
$$h_n \leq C'' m_n^{1+2/d+\alpha} n^{\kappa+4/d}.$$
It follows that
$$\mathcal T_n-\mathcal T_{n-1} \leq C 2^{n(1+2/d+\alpha)} n^{2\kappa+4/d}(\log n).$$
Because we assume $\alpha < -1 - 2/d$, the exponent satisfies $1+2/d+\alpha < 0$. This guarantees the right-hand side decays exponentially in $n$, implying the limit $\mathcal T:=\lim_{n\to\infty}\mathcal T_n$ is finite. At time $\mathcal T$, all mass in finite clusters has been recolored red (at most $p/2$ by our assumptions). Since the  total density is $p$, an infinite cluster must hold the remaining $p/2$ density, proving finite-time blowup almost surely.
\end{proof}
\begin{figure}[H]
\centering
\begin{tikzpicture}[xscale=0.12,yscale=0.12, rotate=90,
    vtx/.style={circle,fill=black,inner sep=0.55pt},
    levzero/.style={black!70,line width=0.45pt},
    levone/.style={blue!70!black,line width=2pt},
    levtwo/.style={orange!85!black,line width=2pt},
    gridstyle/.style={blue!25,thin,dotted}
]


\draw[step=1cm,gridstyle] (-13,-62) grid (13,62);

\foreach \x in {-13,...,13}{%
    \foreach \y in {-62,...,62}{%
        \fill[black] (\x,\y) circle (1pt);
    }%
}

\foreach \y in {-62,-58,-57,-53,-52,-48,-47,-43,-42,-38,-37,-33,-32,-28,-27,-23,-22,-18,-17,-13,-12,-8,-7,-3,-2,2,3,7,8,12,13,17,18,22,23,27,28,32,33,37,38,42,43,47,48,52,53,57,58,62}{%
    \foreach \x in {-13,-10,-7,-4,-1,2,5,8,11}{%
        \draw[levzero] (\x,\y) -- ({\x+2},\y);
    }%
}

\foreach \x in {-13,-11,-10,-8,-7,-5,-4,-2,-1,1,2,4,5,7,8,10,11,13}{%
    \foreach \a/\b in {-62/-58,-57/-53,-52/-48,-47/-43,-42/-38,-37/-33,-32/-28,-27/-23,-22/-18,-17/-13,-12/-8,-7/-3,-2/2,3/7,8/12,13/17,18/22,23/27,28/32,33/37,38/42,43/47,48/52,53/57,58/62}{%
        \draw[levzero] (\x,\a) -- (\x,\b);
    }%
}

\foreach \y in {-60,-40,-35,-15,-10,10,15,35,40,60}{%
    \foreach \x in {-11,-8,-2,1,7,10}{%
        \draw[levone] (\x,\y) -- ({\x+1},\y);
    }%
}

\foreach \x in {-12,-6,-3,3,6,12}{%
    \foreach \a/\b in {-58/-57,-53/-52,-48/-47,-43/-42,-33/-32,-28/-27,-23/-22,-18/-17,-8/-7,-3/-2,2/3,7/8,17/18,22/23,27/28,32/33,42/43,47/48,52/53,57/58}{%
        \draw[levone] (\x,\a) -- (\x,\b);
    }%
}

\foreach \y in {-50,50}{%
    \foreach \x in {-5,4}{%
        \draw[levtwo] (\x,\y) -- ({\x+1},\y);
    }%
}

\foreach \x in {-9,9}{%
    \foreach \a/\b in {-38/-37,-13/-12,12/13,37/38}{%
        \draw[levtwo] (\x,\a) -- (\x,\b);
    }%
}

\end{tikzpicture}
\caption{Renormalization step. \textcolor{blue}{blue: level 1 connections} and \textcolor{orange}{orange: Level 2 connections}}
\label{fig:renorm}
\end{figure}

\section{Special starting configurations}\label{sec:startconfig}
In the following, we construct special starting configurations for the Cluster-Cluster model, which illustrate the sensitivity of the occurrence of blowups in the regime $\alpha \in (-1,0)$. Let us stress that we construct the starting configurations as subgraphs of $(\BZ^d,\mathcal{E}(\BZ^d))$, without fulfilling the requirements of Definition~\ref{def:clustercluster}. In order to obtain proper starting configurations for the Cluster-Cluster model, one simply takes a uniform spanning forest of the respective configurations. 

We start with building for any $\alpha\in(-1,0)$ an example which immediately blows up.
\begin{proof}[Proof of Theorem \ref{bad_start_config}]
    First consider the following starting configuration with $p=1$, constructed via applying successive renormalization of the square lattice. 
    
    The renormalization step with parameter $\ell$ and dimension $d \ge 2$ is as follows. 
    We start by tiling the lattice with $3 \times \ell \times \ell \times ... \times \ell$ boxes made of nodes.
    To preserve translation invariance, we create the tiling by first creating the box containing $0$ by choosing a possible box uniformly out of all the possible boxes containing $0$. 
    For each box, we connect the exterior nodes to one cluster, and keep the nodes inside the boxes disconnected.
    Each box will now act as a single node in the renormalized lattice, where two nodes are connected via the center edge between the two.

    We apply the renormalization step to $\BZ^d$ $n$ times and take $n \to \infty$.     
    An example of the first $3$ renormalization steps with $d=2,\ell=5$ can be seen in Figure \ref{fig:renorm}.

    Note that for each vertex $x\in \BZ^d$ it takes finitely many steps almost surely to decide its final cluster.
    This is because at each step, there is $\frac{(\ell-2)^{d-1}}{3\ell^{d-1}}\ge \frac{1}{3^d}$ probability to have it be in the interior of a box, after which it makes no more connections.
    We use this to assign a layer to each cluster - the layer of the cluster is the number of renormalizations it has been affected by.

    We can now bound from below the rate at which a cluster in layer $k$ connects to a cluster in layer $k+1$.
    
    Denoting by $s_k$ the size of a cluster in layer $k$, we can see that $s_k \ge \ell^{d-1} s_{k-1}$ if we only consider the clusters of layer $k-1$ on the largest face of the box in renormalization step $k$.
    This gives $s_k \ge \rb{\ell^{d-1}}^k$

    Now consider just the rate of connection via one of the two largest faces (with dimensions $\ell \times \ell \times ... \times \ell$). 
    A cluster attempts to move to one of these two directions with probability $\frac{1}{d}$. When it attempts to move, it will connect to the higher layer with probability $\frac{G_k}{G_k+B_k}$, where $G_k$ is the number of edges between the cluster in layer $k+1$ and the cluster in layer $k$, and $B_k$ is the number of edges to other clusters in smaller layers.
    We can see that $G_k = \rb{\ell^{d-1}}^k$ as any edge outside the layer $k$ box will connect to layer $k+1$. For $B_k$, we can see that connections to smaller clusters can only happen inside the box of layer $k$, so by the volume of the box we get $B_k \le \rb{3\ell^{d-1}}^k$
    
    Combining the above gives an effective rate of increasing from layer $k$ to layer $k+1$ of
    \begin{equation}
        \begin{aligned}
            r_{\textup{eff}}^{(k)} \ge \frac{1}{d}s_k^{-\alpha}\frac{G_k}{G_k+B_k} 
            &\ge \frac{1}{d}\rb{\frac{\rb{\ell^{d-1}}^k\rb{\rb{\ell^{d-1}}^{-\alpha}}^k}{3^k\rb{\ell^{d-1}}^k+\rb{\ell^{d-1}}^k}} \ge \frac{1}{2d}\rb{\frac{\rb{\ell^{d-1}}^{-\alpha}}{3}}^k.
        \end{aligned}
    \end{equation}
    If we now take $\ell$ large such that $\frac{\rb{\ell^{d-1}}^{-\alpha}}{3} > 2$, we get $r_{\textup{eff}}^{(k)} \ge 2^k$ for large enough $k$ (recalling $\alpha<0$). Here, note that $\ell \rightarrow \infty$ as $\alpha \rightarrow 0$. We can see by Borel-Cantelli that this is sufficient. More precisely, let $\ep>0$, and note that we can stochastically dominate the rate to go from layer $k$ to layer $k+1$ by the law of random variables $X_i$, where $X_i \sim \operatorname{Exp}(2^k), p_i = \Prob{X_i > \ep} = e^{-\ep2^k}$. Since $\sum_{i=1}^{\infty} p_i < \infty$, we have that almost surely there exists $k$ such that all layers connect to the one above in $\ep$ time, i.e., an infinite cluster in $\ep$ time.

    To obtain density $p\in(0,1)$, set
\[
q:=\frac{(\ell-2)^{d-1}}{3\ell^{d-1}},
\qquad
\gamma:=1-q.
\]
The density of vertices of layer $k$ is $q\gamma^k$, while the density of
vertices of layer at least $k$ is $\gamma^k$. Choose $K\ge0$ such that
\[
\gamma^{K+1}\le p\le\gamma^K,
\]
delete all clusters of layers smaller than $K$, and independently delete
each layer-$K$ cluster with probability
\[
r:=\frac{\gamma^K-p}{q\gamma^K}\in[0,1].
\]
The density of the remaining configuration is
\[
\gamma^{K+1}+(1-r)q\gamma^K=p.
\]
All clusters of layers larger than $K$ remain, so the infinite
hierarchical tails are unchanged.
\end{proof}


Conversely, we now show that for any $\alpha\in(-1,0)$ there is an initial configuration for which the Cluster-Cluster model doesn't blow up.

\begin{proof}[Proof of Theorem \ref{thm:good_start_config}]
    We use a multi-scale renormalization argument. The base Level 0 is a $3 \times \dots \times 3$ cube with its center vertex disconnected; see Figure~\ref{fig:BaseCase}.

    \begin{figure}[h]
    \centering
    \begin{tikzpicture}[scale=1]
    \def\a{0.22}
    \def\gap{0.85}
    \begin{scope}[shift={(-2.2,3.2)}, scale=\a]
        \foreach \x/\y in {0/0,1/0,2/0,0/1,2/1,0/2,1/2,2/2}{
            \fill (\x,\y) circle (2.2pt);
        }
        \fill[red] (1,1) circle (2.8pt);
        \draw (0,0)--(1,0)--(2,0);
        \draw (0,2)--(1,2)--(2,2);
        \draw (0,0)--(0,1)--(0,2);
        \draw (2,0)--(2,1)--(2,2);
    \end{scope}
    \end{tikzpicture}
    \caption{\label{fig:BaseCase}Base level $0$ configuration with a disconnected center.}
    \end{figure}

    At step $k \ge 1$, we construct a layer-$k$ macro-cube of side length $3N_k n$ (where $n = n_{k-1}$ is the layer-$(k-1)$ sub-cube side length) by arranging a grid of $N_k^d$ layer-$(k-1)$ sub-cubes; see Figure~\ref{fig:Hierachie}. In a similar way to Theorem \ref{thm:good_start_config}, we make the configuration translation invariant by choosing 0 to be in a uniform box in each level. We aim to choose $N_k$ large enough so that the probability of layer $k$ connecting to layer $k+1$ before time $k$ is at most $\frac{1}{k^2}$.

    \begin{figure}[h]
    \centering
    \begin{tikzpicture}[scale=0.92]
    \def\a{0.22}
    \def\gap{0.85}
    \fill[gray!20] ({3*\gap-0.12},{3*\gap-0.12}) rectangle ({5*\gap+2*\a+0.12},{5*\gap+2*\a+0.12});
    \foreach \i in {0,...,8}{
        \foreach \j in {0,...,8}{
            \begin{scope}[shift={({\i*\gap},{\j*\gap})}, scale=\a]
                \foreach \x/\y in {0/0,1/0,2/0,0/1,2/1,0/2,1/2,2/2}{
                    \fill (\x,\y) circle (2.2pt);
                }
                \fill[red] (1,1) circle (2.8pt);
                \draw (0,0)--(1,0)--(2,0);
                \draw (0,2)--(1,2)--(2,2);
                \draw (0,0)--(0,1)--(0,2);
                \draw (2,0)--(2,1)--(2,2);
            \end{scope}
        }
    }
    \foreach \j in {0,1,2,6,7,8}{
        \foreach \i in {0,...,7}{
            \draw[blue,dashed] ({\i*\gap+2*\a},{\j*\gap+\a}) -- ({(\i+1)*\gap},{\j*\gap+\a});
        }
    }
    \foreach \j in {3,4,5}{
        \foreach \i in {0,1,6,7}{
            \draw[blue,dashed] ({\i*\gap+2*\a},{\j*\gap+\a}) -- ({(\i+1)*\gap},{\j*\gap+\a});
        }
    }
    \foreach \i in {0,1,2,6,7,8}{
        \foreach \j in {0,...,7}{
            \draw[blue,dashed] ({\i*\gap+\a},{\j*\gap+2*\a}) -- ({\i*\gap+\a},{(\j+1)*\gap});
        }
    }
    \foreach \i in {3,4,5}{
        \foreach \j in {0,1,6,7}{
            \draw[blue,dashed] ({\i*\gap+\a},{\j*\gap+2*\a}) -- ({\i*\gap+\a},{(\j+1)*\gap});
        }
    }
    \end{tikzpicture}\hspace{1cm}
    \begin{tikzpicture}[scale=0.49]
    \def\B{1.15}      
    \def\gap{1.55}    
    \def\inner{0.38}  
    \foreach \i in {0,...,8}{
        \foreach \j in {0,...,8}{
            \pgfmathsetmacro{\x}{\i*\gap}
            \pgfmathsetmacro{\y}{\j*\gap}
            \draw[black, line width=0.45pt] (\x,\y) rectangle ({\x+\B},{\y+\B});
            \fill[gray!25] ({\x+0.5*\B-0.5*\inner},{\y+0.5*\B-0.5*\inner}) rectangle ({\x+0.5*\B+0.5*\inner},{\y+0.5*\B+0.5*\inner});
            \fill[red] ({\x+0.5*\B},{\y+0.5*\B}) circle (1.2pt);
        }
    }
    \fill[gray!12] ({3*\gap-0.18},{3*\gap-0.18}) rectangle ({5*\gap+\B+0.18},{5*\gap+\B+0.18});
    \foreach \i in {0,...,8}{
        \foreach \j in {0,...,8}{
            \pgfmathsetmacro{\x}{\i*\gap}
            \pgfmathsetmacro{\y}{\j*\gap}
            \draw[black, line width=0.45pt] (\x,\y) rectangle ({\x+\B},{\y+\B});
            \fill[gray!25] ({\x+0.5*\B-0.5*\inner},{\y+0.5*\B-0.5*\inner}) rectangle ({\x+0.5*\B+0.5*\inner},{\y+0.5*\B+0.5*\inner});
            \fill[red] ({\x+0.5*\B},{\y+0.5*\B}) circle (1.2pt);
        }
    }
    \foreach \j in {0,1,2,6,7,8}{
        \foreach \i in {0,...,7}{
            \draw[blue,thick] ({\i*\gap+\B},{\j*\gap+0.5*\B}) -- ({(\i+1)*\gap},{\j*\gap+0.5*\B});
        }
    }
    \foreach \j in {3,4,5}{
        \foreach \i in {0,1,6,7}{
            \draw[blue,thick] ({\i*\gap+\B},{\j*\gap+0.5*\B}) -- ({(\i+1)*\gap},{\j*\gap+0.5*\B});
        }
    }
    \foreach \i in {0,1,2,6,7,8}{
        \foreach \j in {0,...,7}{
            \draw[blue,thick] ({\i*\gap+0.5*\B},{\j*\gap+\B}) -- ({\i*\gap+0.5*\B},{(\j+1)*\gap});
        }
    }
    \foreach \i in {3,4,5}{
        \foreach \j in {0,1,6,7}{
            \draw[blue,thick] ({\i*\gap+0.5*\B},{\j*\gap+\B}) -- ({\i*\gap+0.5*\B},{(\j+1)*\gap});
        }
    }
    \end{tikzpicture}
    \caption{\label{fig:Hierachie}Hierarchical structure: Level 1 and Level 2 connections.}
    \end{figure}

For a level-$0$ block, its distinguished obstruction is the separation
between its central vertex and its exterior cluster; it is crossed when
these belong to the same cluster.

Recursively, let $\mathcal A_k(Q)$ be the central array of level-$(k-1)$
subblocks in a level-$k$ block $Q$. The level-$k$ obstruction is the
collection of the distinguished obstructions of the blocks in
$\mathcal A_k(Q)$. A subblock is called neutralized once its distinguished
obstruction is crossed through an interaction with the level-$k$ cluster.
The level-$k$ obstruction is crossed once the level-$k$ cluster connects
the designated inner and outer parts of $Q$ through $\mathcal A_k(Q)$.
We use two deterministic properties of this construction. First, in order to
cross the level \(k\) obstruction, the dynamics must neutralize a positive
fraction of the obstructing level \(k-1\) blocks in the central array. Second,
one jump of the level \(k\) cluster can neutralize only a bounded number of
such blocks. Thus there is a constant \(c_*>0\), depending only on \(d\), such
that if \(Z\) obstructing subblocks remain closed up to time \(k\), then crossing
the level \(k\) obstruction before time \(k\) forces at least \(c_*Z\) jumps of
the level \(k\) cluster.

Fix \(k\), and suppose the construction below level \(k\) has already been
executed. Let \(\varepsilon_n>0\) be the probability that a single level
\(k-1\) block, run with its internal clocks only, has not crossed its own
distinguished obstruction by time \(k\). The important observation is that
\(\varepsilon_n\) depends on the previous scales and on \(k\), but not on the
new parameter \(N=N_k\). Positivity of $\varepsilon_n$ is immediate, for example from the event
that none of the clocks relevant to this block rings before time \(k\).

Let \(Z_N\) be the number of central level \(k-1\) subblocks which would remain
closed up to time \(k\) under their internal evolution. These events are
independent for the \(N^d\) copies, so
\[
    Z_N\sim \operatorname{Bin}(N^d,\varepsilon_n).
\]
By Chernoff's bound,
\[
    \mathbb P\left(Z_N<\frac12\varepsilon_n N^d\right)
    \le \exp(-c\varepsilon_n N^d).
\]
Choosing \(N\) large, this probability is at most \(1/(2k^2)\).

On the complementary event, crossing the level \(k\) obstruction before time
\(k\) requires at least
\[
    M_N:=c_*\varepsilon_n N^d
\]
jumps of the level \(k\) cluster, after decreasing \(c_*\) if necessary. Before
the obstruction is crossed, the size of this cluster is between two constants
times \(N^d\), where the constants may depend on the already fixed scale \(n\),
but not on \(N\). Therefore its jump rate is bounded from above by
\[
    \lambda_N\le c_n (N^d)^{-\alpha}
\]
for some \(c_n<\infty\) independent of \(N\). Hence the number of such jumps by
time \(k\) is stochastically dominated by
\[
    \operatorname{Pois}\bigl(kc_n(N^d)^{-\alpha}\bigr).
\]
Since
\[
    \frac{kc_n(N^d)^{-\alpha}}{M_N}
    \le
    \frac{kc_n}{c_*\varepsilon_n}N^{-d(1+\alpha)}
    \longrightarrow 0
    \qquad\text{as }N\to\infty,
\]
using \(\alpha>-1\), the Poisson large-deviation bound gives, for \(N\) large
enough,
\[
    \mathbb P\left(
        \operatorname{Pois}\bigl(kc_n(N^d)^{-\alpha}\bigr)\ge M_N
    \right)
    \le \frac1{2k^2}.
\]
Thus \(N=N_k\) can be chosen so that
\[
    \mathbb P\left(
        \text{the level \(k\) obstruction is crossed before time } k
    \right)
    \le \frac1{k^2}.
\]

Choosing the sequence \(N_k\) recursively in this way, we have
\[
    \sum_{k=1}^\infty
    \mathbb P\left(
        \text{the level \(k\) obstruction is crossed before time } k
    \right)
    <\infty.
\]
By Borel--Cantelli, almost surely only finitely many of these events occur. Fix
an integer \(T\). For all sufficiently large \(k\ge T\), the level \(k\)
obstruction has not been crossed by time \(k\), and hence has not been crossed
by time \(T\). Since any infinite cluster must cross every sufficiently large
level obstruction, there is no infinite cluster at time \(T\). 
Therefore no infinite cluster appears at any finite time.
\end{proof}
\section{Fully packed one dimensional Cluster-Cluster model}\label{sec:fullypacked}

 Throughout this section we assume that \(d=1\) and \(p=1\), and no edges at time $0$. Before explosion,
the clusters form a partition of \(\mathbb Z\) into finite intervals. If two
adjacent clusters have sizes \(i\) and \(j\), then their common boundary edge
opens at rate
\[
    q_\alpha(i,j):=\frac12\bigl(i^{-\alpha}+j^{-\alpha}\bigr),
\]
because either cluster may ring and choose the direction of the other. We first
prove that, at each fixed time before explosion, the sequence of cluster
lengths is a stationary renewal process. This identifies the model with a
deterministic time change of Smoluchowski's coagulation equation with the
generalized sum kernel
\begin{equation}\label{def:SmulKernel}
     K_\alpha(i,j):=i^{-\alpha}+j^{-\alpha}.
\end{equation}
We then use classical results for this equation whenever they are available,
and prove the estimates that are specific to the physical time scale of
the Cluster-Cluster model.

\subsection{Renewal structure and the coagulation equation}

For every edge \(e=\{x,x+1\}\), let
\[
    D_t(e):=\{e\text{ is closed at time }t\},
    \qquad
    d(t):=\mathbb P(D_t(\{0,1\}))  = \mathbb P(D_t(e)). 
\]
As long as there is no infinite cluster, \(d(t)\) is the cluster density. 

\begin{lemma}
\label{lem:closed-edge-regeneration}
Fix a time \(t\) before explosion and thus \(d(t)>0\). Conditional on
\(D_t(\{0,1\})\), the restrictions of the configuration at time \(t\) to
\(\{\ldots,-1,0\}\) and to \(\{1,2,\ldots\}\) are independent.
\end{lemma}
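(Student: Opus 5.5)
The approach is a graphical construction combined with an information-flow argument across the edge \(\{0,1\}\). Before explosion, the dynamics can be realized from independent Poisson clocks. Attach to each site \(x\) a rate-one clock, and to each ring an independent direction \(\pm1\) and an independent uniform mark \(U\in[0,1]\). A cluster \(C\) of size \(i\) containing the rung site then rings and chooses direction \(r\) with rate \(|C|^{-\alpha}/2\) per direction. We accept the ring with probability \(|C|^{-\alpha}/|C|\), or use a thinning by a Poisson process of suitably high intensity on the finite truncation from Proposition~\ref{prop:construction-until-explosion}. More simply, we attach to each edge \(e=\{x,x+1\}\) two independent Poisson processes of intensity one on \([0,\infty)\times[0,\infty)\). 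One represents rings of the left cluster in direction \(+1\), the other rings of the right cluster in direction \(-1\). A point \((s,u)\) of the first process opens \(e\) at time \(s\) iff \(e\) is still closed and \(u\le \tfrac12|C^-_s(e)|^{-\alpha}\), where \(C^-_s(e)\) is the cluster to the left of \(e\). The second process is handled symmetrically. Since \(p=1\), every ring is a merge attempt, and in \(d=1\) only the single boundary edge in that direction can be opened. This reproduces exactly the rates \(q_\alpha(i,j)\). The state of an edge therefore depends only on its own Poisson marks and on the sizes of the two adjacent clusters.

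Next, we observe that on the event \(D_t(\{0,1\})\) the edge \(\{0,1\}\) has been closed throughout \([0,t]\), since edges never close again. While \(\{0,1\}\) is closed, the evolution of the edges in \(\{\ldots,-1,0\}\) uses only the Poisson marks of edges to the left of \(\{0,1\}\). Indeed, the opening rule for \(e=\{x,x+1\}\) with \(x<0\) involves the sizes of clusters adjacent to \(e\). These clusters lie entirely in \(\{\ldots,0\}\) as long as \(\{0,1\}\) is closed, and their sizes are determined by the states of edges to the left. Hence there is a measurable map \(\Phi^-\) of the left marks \(\omega^-\) and a map \(\Phi^+\) of the right marks \(\omega^+\) such that on \(D_t\) the left restriction equals \(\Phi^-_t(\omega^-)\) and the right restriction equals \(\Phi^+_t(\omega^+)\). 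Here \(\Phi^\pm\) is the "half-line process" in which \(\{0,1\}\) is frozen closed. The event \(D_t\) itself is the event that no mark on \(\{0,1\}\) is accepted. A mark \((s,u)\) is accepted iff \(u\le\frac12|C^-_s|^{-\alpha}\) (resp.\ \(|C^+_s|^{-\alpha}\)), where \(|C^-_s|\) is a function of \(\Phi^-_s(\omega^-)\) and \(|C^+_s|\) of \(\Phi^+_s(\omega^+)\). Conditional on \(\omega^-,\omega^+\), the marks on \(\{0,1\}\) are independent Poisson, so
\[
\mathbb P\bigl(D_t\mid\omega^-,\omega^+\bigr)=\exp\Bigl(-\tfrac12\!\int_0^t |C^-_s|^{-\alpha}ds\Bigr)\exp\Bigl(-\tfrac12\!\int_0^t |C^+_s|^{-\alpha}ds\Bigr)=:g(\omega^-)h(\omega^+).
\]
For bounded \(F\) and \(G\) of the left and right restrictions, we then get \(\mathbb E[F G\,\mathbf 1_{D_t}]=\mathbb E[F(\Phi^-)g]\,\mathbb E[G(\Phi^+)h]\), using the independence of \(\omega^-\) and \(\omega^+\). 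Dividing by \(\mathbb P(D_t)=\mathbb E[g]\mathbb E[h]\) gives the factorization, which is the claimed conditional independence.

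The main obstacle is making this rigorous on the infinite line for \(\alpha<0\), where the process is only defined up to \(\tau_{\rm exp}\) and the half-line processes \(\Phi^\pm\) must themselves be well defined. I would carry out the argument first for the \(K\)-truncated dynamics of Proposition~\ref{prop:construction-until-explosion}, restricted to a finite window \([-L,L]\). These are finite-rate Markov chains, so the above factorization is elementary. I would then pass to \(K,L\to\infty\), using the local stabilization at times \(t<\tau_{\rm exp}\) and the fact that \(d(t)>0\), so that conditioning on \(D_t\) survives the limit.
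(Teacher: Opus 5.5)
Your proposal is correct and follows essentially the same route as the paper. You freeze the edge $\{0,1\}$, run the two half-line dynamics on independent noise, and realize the crossing attempts of $\{0,1\}$ by independent Poisson noise modulated by $\tfrac12 L_s^{-\alpha}$ and $\tfrac12 R_s^{-\alpha}$; your thinning of a planar Poisson process is equivalent to the paper's time change $N_\pm\bigl(\tfrac12\int_0^s\cdot\,du\bigr)$, so $D_t$ splits into a left event and a right event. Your explicit product formula for $\mathbb P(D_t\mid\omega^-,\omega^+)$ and the truncation-and-limit step are a more careful version of what the paper leaves implicit.
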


\begin{proof}
Split the clock of each cluster into independent left and right pointing
clocks, each with half of the original rate. Suppress the edge \(\{0,1\}\),
and construct the two half-line dynamics independently. Let \(L_s\) and
\(R_s\) be the sizes of the clusters adjacent to the suppressed edge at time
$s\in [0,t]$. Independently of the two half-line evolutions, let \(N_-\) and \(N_+\)
be rate $1$ Poisson processes. The attempted crossings of \(\{0,1\}\) from
the left and right are obtained by the time changes
\[
    N_-\left(\frac12\int_0^s L_u^{-\alpha}\,du\right)
    \quad\text{and}\quad
    N_+\left(\frac12\int_0^s R_u^{-\alpha}\,du\right),
\]
respectively. Hence \(D_t(\{0,1\})\) is the intersection of an event measurable
with respect to the left-half-line evolution and \(N_-\), and an event
measurable with respect to the right-half-line evolution and \(N_+\) up to time $t$.
Conditioning on the independent events 
\[
   \left\{ N_-\left(\frac12\int_0^t L_u^{-\alpha}\,du\right) = 0 \right\}
\]
and
\[
   \left\{ N_+\left(\frac12\int_0^t R_u^{-\alpha}\,du\right) = 0 \right\}
\]
gives us the distribution conditioned on $D_t(\{0,1\})$ and
preserves independence between the two half-line configurations at time $t$.

Thus, conditioned on $D_t(0,1)$ the two half-lines are independent.
\end{proof}

Let \(V_k(t)\) denote the particle density in clusters of size \(k\), i.e. by ergodicity, 
$V_k(t) = \mathbb{P}\big( |\mathfs{C}_0(t)| = k \big),$
and let
\[
    c_k(t):=\frac{V_k(t)}{k}
\]
be the density of clusters of size \(k\), where we recall that  $\mathfs{C}_0(t)$ denotes the cluster started at $0$. Thus
\[
    d(t)=\sum_{k\ge1}c_k(t).
\]

\begin{corollary}
\label{cor:renewal-law}
Fix \(t\) before explosion, and thus \(d(t)>0\).  Conditional on
\(D_t(\{0,1\})\), the successive cluster lengths to the right of
\(\{0,1\}\) are independent and identically distributed with law
\[
    p_k(t):=\frac{c_k(t)}{d(t)},
    \qquad k\ge1.
\]
The same is true to the left, independently. Consequently, if
\(W_{i,j}(t)\) is the density of closed edges whose left and right clusters
have sizes \(i\) and \(j\), respectively, then
\begin{equation}
\label{eq:pair-density}
    W_{i,j}(t)
    =d(t)p_i(t)p_j(t)
    =\frac{c_i(t)c_j(t)}{d(t)}.
\end{equation}
\end{corollary}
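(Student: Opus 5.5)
The plan is to iterate Lemma~\ref{lem:closed-edge-regeneration} along the closed edges to the right of \(\{0,1\}\), and then identify the common law using stationarity.

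\emph{Step 1: the first length is independent of what follows.} Work conditionally on \(D_t(\{0,1\})\), with the configuration on \(\{1,2,\ldots\}\), and let \(\ell_1\) be the length of the cluster starting at \(1\). Fix \(k\ge1\). On the event \(\{\ell_1=k\}\cap D_t(\{0,1\})\), the edge \(\{k,k+1\}\) is closed. Apply Lemma~\ref{lem:closed-edge-regeneration} at the edge \(\{k,k+1\}\); by translation invariance the lemma holds at every edge. Conditional on \(D_t(\{k,k+1\})\), the configurations on \(\{\ldots,k\}\) and on \(\{k+1,\ldots\}\) are independent. The event \(D_t(\{0,1\})\cap\{\ell_1=k\}\) is measurable with respect to the left part, so conditioning on it does not change the law of the right part. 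That law is the law of the configuration to the right of a closed edge, which by translation invariance equals the original conditional law to the right of \(\{0,1\}\).

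\emph{Step 2: i.i.d.\ by induction.} Step 1 says that, conditionally on \(D_t(\{0,1\})\), the length \(\ell_1\) is independent of the shifted remainder \((\ell_2,\ell_3,\ldots)\), and that this remainder has the same law as \((\ell_1,\ell_2,\ldots)\). Inducting on the number of lengths, every finite-dimensional distribution is a product of copies of the law of \(\ell_1\). The left side is handled by the mirror argument. The two sides are independent of each other by a direct application of Lemma~\ref{lem:closed-edge-regeneration} at \(\{0,1\}\) itself.

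\emph{Step 3: identify the law.} Closed edges have density \(d(t)\), and clusters of size \(k\) have density \(c_k(t)\). Each cluster has exactly one closed edge immediately to its left. Hence, by ergodicity,
\[
\mathbb P\bigl(\ell_1=k \,\big|\, D_t(\{0,1\})\bigr)=\frac{c_k(t)}{d(t)}=p_k(t),
\]
since both numerator and denominator are spatial densities (ergodic averages). Because there is no explosion, \(\sum_k c_k=d\), so \(p_k\) is a probability law. For the pair density, \(W_{i,j}(t)=d(t)\,\mathbb P(\ell_0=i,\ell_1=j\mid D_t)\), where \(\ell_0\) is the length of the cluster to the left. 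Left–right independence then gives \(d\,p_ip_j=c_ic_j/d\), which is \eqref{eq:pair-density}.

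\emph{Main obstacle.} The delicate point is Step 1: the lemma is applied at a random location \(k=\ell_1\). The fix is to decompose over the deterministic values of \(k\), as above. One must also check that the conditional law of the right part given \(D_t(\{k,k+1\})\) is genuinely the shift of the law given \(D_t(\{0,1\})\). This uses only the translation invariance of the initial condition (no edges, \(p=1\)) together with translation-invariant dynamics.
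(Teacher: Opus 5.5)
Your proposal is correct and takes the same route as the paper: regenerate at the next closed edge via Lemma~\ref{lem:closed-edge-regeneration} and induct, identify the one-step law as \(c_k(t)/d(t)\) because every cluster has exactly one closed left boundary edge, and apply the lemma at \(\{0,1\}\) itself to get left--right independence and \eqref{eq:pair-density}. Your decomposition over the deterministic value \(\ell_1=k\), together with the translation-invariance identification, just spells out the step the paper states in one line (``after the next closed edge, the lemma restarts the same conditional law independently'').
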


\begin{proof}
The cluster immediately to the right of a closed edge has length \(k\) with
conditional probability \(c_k(t)/d(t)\), because every cluster has exactly one
left boundary. After the next closed edge, Lemma~\ref{lem:closed-edge-regeneration}
restarts the same conditional law independently. Induction gives the renewal
law, and applying the regeneration lemma on both sides of \(\{0,1\}\) gives
\eqref{eq:pair-density}.
\end{proof}

Let us now use these insights to study the behaviour of the Cluster-Cluster model for different values of $\alpha$.
For \(\alpha\ge-1\), all rates appearing below are integrable on compact
pre-explosion intervals. Indeed,
\[
    \sum_{k\ge1}k^{-\alpha}c_k(t) \leq \sum_{k\ge1}kc_k(t) = \frac{1}{d(t)} <\infty,
\]
since \(k^{-\alpha}\le 1\) for \(\alpha\ge0\), while
\(k^{-\alpha}\le k\) for \(-1\le\alpha<0\). The following (deterministic) balance equations
therefore hold in integrated form, and hence almost everywhere in time. Recall $K_{\alpha}$ from \eqref{def:SmulKernel}. The following proposition gives the physical time coagulation equation. 

\begin{proposition}
\label{prop:physical-coagulation-equation}
For \(\alpha\ge-1\) and every \(k\ge1\),
\begin{equation}
\label{eq:physical-coagulation}
\begin{aligned}
    c_k'(t)
    =\frac{1}{2d(t)}
      \sum_{i=1}^{k-1}K_\alpha(i,k-i)c_i(t)c_{k-i}(t) -\frac{c_k(t)}{d(t)}
      \sum_{j\ge1}K_\alpha(k,j)c_j(t),
\end{aligned}
\end{equation}
with \(c_1(0)=1\) and \(c_k(0)=0\) for \(k\ge2\). Moreover,
\begin{equation}
\label{eq:density-derivative}
    d'(t)=-\sum_{k\ge1}k^{-\alpha}c_k(t).
\end{equation}
Equivalently, the particle densities satisfy
\begin{equation}
\label{eq:physical-particle-density}
\begin{aligned}
    V_k'(t)
    &=\frac{k}{2d(t)}\sum_{i=1}^{k-1}
      \frac{K_\alpha(i,k-i)}{i(k-i)}V_i(t)V_{k-i}(t) 
      -\frac{kV_k(t)}{d(t)}\sum_{j\ge1}  \frac{K_\alpha(k,j)}{k j} V_j(t).
\end{aligned}
\end{equation}

\end{proposition}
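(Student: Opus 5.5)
The plan is to derive \eqref{eq:physical-coagulation} as a deterministic balance (Kolmogorov forward) equation for the expected number of size-$k$ clusters per unit length. Corollary~\ref{cor:renewal-law} will then be used to factorize the two-cluster correlations that appear. Work in a large box $\Lambda_N=[-N,N]$ and let $\mathcal N_k^N(t)$ be the number of size-$k$ clusters whose left endpoint lies in $\Lambda_N$. By translation invariance, $\BE[\mathcal N_k^N(t)]=(2N+1)c_k(t)$.

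In $d=1$, $p=1$ the only transitions are edge openings. A closed edge whose left and right clusters have sizes $i$ and $j$ opens at rate $q_\alpha(i,j)=\tfrac12K_\alpha(i,j)$, because each adjacent cluster rings at rate $i^{-\alpha}$ (resp.\ $j^{-\alpha}$) and picks the direction of the shared edge with probability $1/2$. The opening merges the two clusters into one of size $i+j$.

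Dynkin's formula applied to $\mathcal N_k^N$ therefore gives, up to boundary terms of order $O(1)$ coming from clusters straddling $\partial\Lambda_N$,
\[
\frac{d}{dt}\BE[\mathcal N_k^N]=\sum_{e}\Bigl(\sum_{i+j=k}\tfrac12K_\alpha(i,j)\,\BP(e\text{ closed},\,L_e=i,\,R_e=j)-\sum_{j}\tfrac12 K_\alpha(k,j)\bigl[\BP(e\text{ closed},L_e=k,R_e=j)+\BP(e\text{ closed},L_e=j,R_e=k)\bigr]\Bigr).
\]
Here the sum runs over edges $e$ in the box, and $L_e,R_e$ denote the sizes of the clusters to the left and right of $e$.

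The key input is \eqref{eq:pair-density}: the probability that a given edge is closed with left and right cluster sizes $i$ and $j$ equals $W_{i,j}=c_ic_j/d$. Substituting this and dividing by $2N+1$ yields, after $N\to\infty$, the gain term $\frac1{2d}\sum_{i=1}^{k-1}K_\alpha(i,k-i)c_ic_{k-i}$. The loss terms from the two sides combine as $2\cdot\frac12K_\alpha(k,j)c_kc_j/d$, which is the loss term in \eqref{eq:physical-coagulation}. The initial data are immediate from $p=1$ with no edges at time $0$.

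For \eqref{eq:density-derivative}, I would sum the opening rates over all closed edges instead. We have $d'(t)=-\sum_{i,j}\tfrac12K_\alpha(i,j)W_{i,j}$, and using $\sum_j c_j=d$ together with the symmetry of $K_\alpha$ this reduces to $-\sum_k k^{-\alpha}c_k$. Intuitively, each cluster rings at rate $k^{-\alpha}$ and every ring closes exactly one edge. Equation \eqref{eq:physical-particle-density} then follows by substituting $c_k=V_k/k$ and multiplying by $k$.

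The main obstacle is justifying the interchange of limits, derivatives and the infinite sums, since the rates are unbounded for $\alpha<0$. I would work in integrated form: write $c_k(t)-c_k(0)$ as a time integral of the rate terms. The sum over $j$ in the loss term is dominated by $c_k\sum_j(k^{-\alpha}+j^{-\alpha})c_j/d\le c_k(k^{-\alpha}d+\sum_j jc_j)/d$, and the remark preceding the proposition gives $\sum_j jc_j = 1/d(t)$ for $\alpha\ge-1$. Since $d$ is bounded away from $0$ on compact pre-explosion intervals, this bound is integrable, so dominated convergence applies. The equations then hold almost everywhere, and in fact everywhere once the right-hand sides are shown to be continuous.

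The boundary error in the box is controlled by the expected number of jumps affecting the two boundary edges. This is again bounded by the same integrable rate, so after division by $N$ it vanishes. Finally, the martingale part of Dynkin's formula has mean zero, which completes the argument.
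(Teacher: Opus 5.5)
Your proposal is correct and follows essentially the paper's route: a gain--loss balance for the density of size-$k$ clusters, in which \eqref{eq:pair-density} from Corollary~\ref{cor:renewal-law} factorizes the neighbouring-cluster correlations as $W_{i,j}=c_ic_j/d$ and each closed edge opens at rate $K_\alpha(i,j)/2$; your Dynkin-formula and dominated-convergence bookkeeping makes explicit what the paper leaves implicit, and computing $d'$ directly from the closed-edge opening rates is equivalent to the paper's summation of \eqref{eq:physical-coagulation} over $k$. One harmless slip, inherited from the remark preceding the proposition: in fact $\sum_j jc_j=\sum_j V_j=1$, and it is $\mu=\sum_j jp_j$ that equals $1/d(t)$; since only the finiteness of this sum is used, the argument is unaffected.
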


\begin{proof}
By \eqref{eq:pair-density}, boundaries with adjacent cluster sizes \(i\) and
\(j\) have density \(c_i c_j/d\), and each such boundary opens at rate
\(K_\alpha(i,j)/2\) and adds $k$ vertices in clusters of size $k$ (thus multiplication by $k$ in \eqref{eq:physical-particle-density}). The loss part is composed from all boundaries with adjacent sizes $k$ and any $i$. Summing the corresponding gains and losses gives
\eqref{eq:physical-coagulation}. Summing that equation over \(k\) yields
\eqref{eq:density-derivative}; using
\(c_j=V_j/j\) gives \eqref{eq:physical-particle-density}.
\end{proof}

Next, consider the time normalized by the cluster density $p(t)$ and defined by:
\begin{equation}\label{eq:clusterdensity definition}
    p_k(t):=\frac{c_k(t)}{d(t)},
    \qquad
    \mu(t):=\sum_{k\ge1}k p_k(t),
\end{equation}
i.e. $\mu(t)=\BE\bb{|\mathfs{C}_0(t)|}$ is the expectation of the non-size biased distribution of the clusters size. 
Note that before explosion, mass conservation gives
\begin{equation}
\label{eq:density-mean-relation}
    d(t)\mu(t)=1.
\end{equation}

\begin{proposition}
\label{prop:triangular-equation}
For every \(k\ge1\),
\begin{equation}
\label{eq:triangular-equation}
    p_k'(t)
    =\sum_{i=1}^{k-1}i^{-\alpha}p_i(t)p_{k-i}(t)
     -k^{-\alpha}p_k(t),
    \qquad
    p_k(0)=\mathbf 1_{\{k=1\}}.
\end{equation}
In particular,
\begin{equation}
\label{eq:triangular-recursion}
\begin{aligned}
    p_1(t)&=e^{-t},\\
    p_k(t)&=\int_0^t e^{-k^{-\alpha}(t-s)}
      \sum_{i=1}^{k-1}i^{-\alpha}p_i(s)p_{k-i}(s)\,ds,
      \qquad k\ge2.
\end{aligned}
\end{equation}
\end{proposition}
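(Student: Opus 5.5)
The plan is to derive \eqref{eq:triangular-equation} directly from Proposition~\ref{prop:physical-coagulation-equation} by the quotient rule applied to $p_k=c_k/d$, working at pre-explosion times and for $\alpha\ge-1$, where all the series involved converge. In this range $\sum_j j^{-\alpha}c_j(t)\le \sum_j j c_j(t)=1/d(t)<\infty$. The coagulation equations hold in integrated form, so $c_k$ and $d$ are absolutely continuous on compact pre-explosion intervals, and $d$ is bounded away from $0$ there. Hence $p_k$ is absolutely continuous, and
\[
p_k'=\frac{c_k'}{d}-\frac{c_k\,d'}{d^2}\quad\text{a.e.}
\]
The identity will then be upgraded to hold everywhere once we observe that the right-hand side of \eqref{eq:triangular-equation} is continuous.

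The key computation has three pieces.

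\emph{Gain term.} Dividing the gain term of \eqref{eq:physical-coagulation} by $d$ gives
\[
\frac12\sum_{i=1}^{k-1}\bigl(i^{-\alpha}+(k-i)^{-\alpha}\bigr)p_ip_{k-i}.
\]
Applying the symmetry $i\leftrightarrow k-i$ to the second summand shows this equals $\sum_{i=1}^{k-1} i^{-\alpha}p_ip_{k-i}$.

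\emph{Loss term.} Dividing the loss term by $d$ and using $\sum_j p_j=1$ (the normalization valid before explosion, i.e.\ $d=\sum_j c_j$) gives
\[
-k^{-\alpha}p_k-p_k\sum_j j^{-\alpha}p_j.
\]

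\emph{Density correction.} By \eqref{eq:density-derivative}, the term $-c_kd'/d^2$ equals $+p_k\sum_j j^{-\alpha}c_j/d=p_k\sum_j j^{-\alpha}p_j$.

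The last two pieces cancel exactly in their $\sum_j j^{-\alpha}p_j$ parts, leaving \eqref{eq:triangular-equation}. The initial condition follows from $c_1(0)=d(0)=1$.

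For the recursion \eqref{eq:triangular-recursion}, take $k=1$ first: the sum is empty, so $p_1'=-p_1$ and $p_1(t)=e^{-t}$. For $k\ge2$, equation \eqref{eq:triangular-equation} is a linear ODE in $p_k$ whose forcing term involves only $p_1,\dots,p_{k-1}$. This triangular structure means that variation of constants with integrating factor $e^{k^{-\alpha}t}$, together with $p_k(0)=0$, gives the stated formula inductively.

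The only delicate point is justifying the termwise manipulations: exchanging the sum $\sum_j$ with differentiation, and the a.e.\ versus everywhere issue. These are handled by the integrability bound $\sum_j j^{-\alpha}c_j\le 1/d$ noted before Proposition~\ref{prop:physical-coagulation-equation}. I expect this justification, rather than the algebra itself, to be the main point requiring care.
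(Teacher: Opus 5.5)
Your proposal is correct and follows essentially the same route as the paper: differentiate $p_k=c_k/d$ using the physical coagulation equation and $d'=-\sum_j j^{-\alpha}c_j$, symmetrize the gain term under $i\leftrightarrow k-i$, cancel the two $p_k\sum_j j^{-\alpha}p_j$ contributions (one from the loss term, one from $d'$), and finish with variation of constants. Your extra remarks on absolute continuity and on upgrading the a.e.\ identity to one holding everywhere are a sound supplement to the paper's terser treatment.
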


\begin{proof}
Differentiate \(p_k=c_k/d\), use
\eqref{eq:physical-coagulation}--\eqref{eq:density-derivative}, and note that
\[
    \frac12\sum_{i=1}^{k-1}
    \bigl(i^{-\alpha}+(k-i)^{-\alpha}\bigr)p_i p_{k-i}
    =\sum_{i=1}^{k-1}i^{-\alpha}p_i p_{k-i}.
\]
The terms involving the mean rate of a neighboring cluster cancel.
Variation of constants gives \eqref{eq:triangular-recursion}.
\end{proof}

Recall the gelation time from \eqref{def:Gelation} defined by 
\[
    T_\alpha=\sup\{t\ge0:d(t)>0\}.
\]
At a fixed time, if \(d(t)>0\), the ergodic theorem gives infinitely many
closed edges in both directions, and hence every cluster is finite. If
\(d(t)=0\), then every fixed edge is open almost surely, and therefore all
edges are open almost surely. Thus \(T_\alpha\) is exactly the explosion time.

For later use, let \(X_t\) have law \(p(t)\) (see \eqref{eq:clusterdensity definition}), and define
\begin{equation}\label{eq:MomentBound}
     A_r(t):=\mathbb E[X_t^{-r}],\qquad r>0.
\end{equation}
Whenever \(\mu(t)<\infty\), summation of
\eqref{eq:triangular-equation} against \(k\) gives

\[
\begin{aligned}
    \mu'(t)
    &=
    \sum_{k\ge1}k p_k'(t) \\
    &=
    \sum_{i,j\ge1}
    (i+j)i^{-\alpha}p_i(t)p_j(t)
    -
    \sum_{i\ge1}i^{1-\alpha}p_i(t) \\
    &=
    \sum_{i,j\ge1}
    i^{1-\alpha}p_i(t)p_j(t)
    +
    \sum_{i,j\ge1}
    j i^{-\alpha}p_i(t)p_j(t)
    -
    \sum_{i\ge1}i^{1-\alpha}p_i(t).
\end{aligned}
\]
Since \(\sum_{j\ge1}p_j(t)=1\), the first and third terms cancel.
Therefore
\begin{equation}
\label{eq:mean-equation-alpha-positive}
    \mu'(t)
    =
    \mu(t)A_\alpha(t).
\end{equation}


Next, define the Smoluchowski time
\begin{equation}
\label{eq:smol-time-change}
    \tau(t):=\int_0^t\frac{ds}{d(s)}.
\end{equation}
Let \(t(\tau)\) be its inverse and set
\(\widehat c_k(\tau):=c_k(t(\tau))\). Then, by the chain rule,
\begin{equation}
\label{eq:standard-smoluchowski}
\begin{aligned}
    \frac{d}{d\tau}\widehat c_k(\tau)
    &=\frac12\sum_{i=1}^{k-1}
      K_\alpha(i,k-i)\widehat c_i(\tau)\widehat c_{k-i}(\tau) \\
    &\quad-\widehat c_k(\tau)
      \sum_{j\ge1}K_\alpha(k,j)\widehat c_j(\tau),
\end{aligned}
\end{equation}
and physical time is recovered from
\begin{equation}
\label{eq:inverse-time-change}
    t(\tau)=\int_0^\tau\widehat d(s)\,ds,
    \qquad
    \widehat d(\tau):=\sum_{k\ge1}\widehat c_k(\tau).
\end{equation}
For \(\alpha\ge-1\), the kernel satisfies
\(K_\alpha(i,j)\le C(i+j)\). Thus global mass-conserving solutions of
\eqref{eq:standard-smoluchowski} follow from
\cite[Theorem~2.3]{Laurencot2002}; this is the coagulation-only specialization
of the original result \cite[Theorems~2.4 and~2.5]{BallCarr1990}.

\subsection{The phase diagram}

\subsubsection{\bf{The regime} \(\alpha>0\)}

The following sharp physical-time estimate is specific to the one-dimensional Cluster-Cluster model;
the exponent agrees with the homogeneity heuristic for the generalized sum
kernel, but the proof uses 
\eqref{eq:triangular-equation}.

\begin{proposition}
\label{thm:positive-alpha-uniform-cluster}
Let \(\alpha>0\). Then \(T_\alpha=\infty\), and there are constants
\(0<c_\alpha\le C_\alpha<\infty\) such that
\[
    c_\alpha(1+t)^{1/\alpha}
    \le \mu(t)\le
    C_\alpha(1+t)^{1/\alpha},
    \qquad t\ge0.
\]
More precisely,
\[
    \mu(t)\ge(1+\alpha t)^{1/\alpha}.
\]
\end{proposition}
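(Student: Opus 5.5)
We work entirely with the mean equation \eqref{eq:mean-equation-alpha-positive}, $\mu'(t)=\mu(t)A_\alpha(t)$, where $A_\alpha(t)=\mathbb E[X_t^{-\alpha}]$ and $X_t$ has law $p(t)$. Two complementary estimates on $A_\alpha$ in terms of $\mu$ will give the two bounds.

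\emph{Lower bound.} The map $x\mapsto x^{-\alpha}$ is convex on $(0,\infty)$ for $\alpha>0$, so Jensen gives $A_\alpha(t)\ge \mu(t)^{-\alpha}$. Hence
\[
\mu'(t)\ge \mu(t)^{1-\alpha},\qquad\text{i.e.}\qquad \frac{d}{dt}\mu^\alpha\ge\alpha .
\]
Since $\mu(0)=1$, this yields $\mu(t)\ge(1+\alpha t)^{1/\alpha}$, which is the ``more precisely'' claim.

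\emph{Non-explosion.} We also need $T_\alpha=\infty$. Because $X_t\ge1$, we have $A_\alpha\le1$, so $\mu'\le\mu$ and $\mu(t)\le e^t<\infty$ for all $t$. By \eqref{eq:density-mean-relation}, $d(t)=1/\mu(t)>0$. To make this rigorous we first justify \eqref{eq:mean-equation-alpha-positive} on $[0,t]$. Theorem~\ref{thm:finite_clusters} rules out blowup. A truncation argument, summing \eqref{eq:triangular-equation} against $k\wedge K$ and letting $K\to\infty$ by monotone convergence, gives the integrated inequality $\mu(t)\le 1+\int_0^t\mu$. Gronwall then bounds $\mu$, and afterwards the equation holds with equality.

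\emph{Upper bound.} This is the main obstacle, since we need $A_\alpha(t)\lesssim \mu(t)^{-\alpha}$, a reverse-Jensen bound. It requires that $p(t)$ not put too much mass on small sizes relative to its mean. My plan is to control the small-size mass directly from \eqref{eq:triangular-equation}. Summing over $k\le m$ and noting that all gain terms into $\{k\le m\}$ come from pairs with both indices below $m$, the mass $F_m(t):=\sum_{k\le m}p_k(t)$ satisfies
\[
F_m'(t)\le -\sum_{i\le m} i^{-\alpha}p_i\,(1-F_{m-i}) \le -m^{-\alpha}F_m(1-F_m).
\]
An alternative would be a differential inequality for $A_\alpha$ itself: $A_\alpha'=\sum_{i,j} i^{-\alpha}p_ip_j\bigl((i+j)^{-\alpha}-i^{-\alpha}\bigr)$, which is $\le 0$. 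The cruder route is more robust, though. From the $F_m$ inequality, $F_m(t)\le 1/2$ once $t\gtrsim m^{\alpha}$, with a logistic-type decay beyond that time. Taking $m\asymp (1+t)^{1/\alpha}$ (small constant) and splitting $A_\alpha\le \sum_{k\le m}k^{-\alpha}p_k+m^{-\alpha}$, I aim for the bound $A_\alpha(t)\le C/(1+t)$. To get it I would use dyadic $m$ and sum the tail-decay bounds. Then $\mu'\le C\mu/(1+t)$ only gives $\mu\lesssim(1+t)^{C}$, which is not sharp. So I expect to refine this by comparing with the exact self-similar scaling, i.e. by showing $A_\alpha(t)\le \frac{1}{\alpha(1+t)}(1+o(1))$ from the monotonicity of $A_\alpha$ combined with $\frac{d}{dt}\mu^\alpha=\alpha\mu^\alpha A_\alpha$.

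If this fails, a fallback is the Smoluchowski time change \eqref{eq:standard-smoluchowski}. There the kernel $K_\alpha$ has homogeneity $-\alpha$, and one can quote known scaling bounds for mass-conserving solutions, $\widehat d(\tau)\asymp\tau^{-1/(1+\alpha)}$. We would then convert back to physical time through \eqref{eq:inverse-time-change}: $t\asymp\tau^{\alpha/(1+\alpha)}$, so $\mu=1/\widehat d\asymp t^{1/\alpha}$.
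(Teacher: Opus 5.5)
Your lower bound and the non-explosion step coincide with the paper's. These are Jensen for $x\mapsto x^{-\alpha}$, and $A_\alpha\le 1$ giving $\mu\le e^t$; your truncation with $k\wedge K$ is a reasonable extra justification.

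\textbf{The gap.} The upper bound $\mu(t)\le C_\alpha(1+t)^{1/\alpha}$ is not established.

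\emph{The $F_m$ inequality is degenerate.} The inequality $F_m'\le -m^{-\alpha}F_m(1-F_m)$ is correct. But $F_m(0)=1$, and $F\equiv 1$ satisfies it, so by itself it does not give $F_m(t)\le\frac12$ for $t\gtrsim m^\alpha$. Started from a later time $s$, it only gives $t-s\gtrsim m^\alpha\log\frac{F_m(s)}{1-F_m(s)}$. That requires a lower bound on $1-F_m(s)$ at some $s\lesssim m^\alpha$, which is essentially what you want to prove. This step is repairable by a dyadic induction that keeps the terms $i\ge m/2$, for which $1-F_{m-i}\ge 1-F_{\lfloor m/2\rfloor}$; this gives $F_m\le\frac12$ after time $O(m^\alpha)$.

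\emph{No bound of the form $A_\alpha\le C/(1+t)$ suffices.} As you note, such a bound only gives $\mu\lesssim(1+t)^C$. Since $\log\mu(t)=\int_0^tA_\alpha(s)\,ds$, what is needed is $\int_0^tA_\alpha\le\frac1\alpha\log(1+t)+O(1)$, i.e.\ the leading constant exactly. Your proposed refinement cannot deliver this:
\begin{itemize}
\item Monotonicity of $A_\alpha$ together with $\frac{d}{dt}\mu^\alpha=\alpha\mu^\alpha A_\alpha$ gives no upper bound on $A_\alpha$ at all; both facts are consistent with $A_\alpha$ staying near $1$ for a long time.
\item Even $A_\alpha\le\frac{1+o(1)}{\alpha(1+t)}$ would only yield $(1+t)^{1/\alpha+o(1)}$, unless the error is integrable against $ds/(1+s)$.
\end{itemize}

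\emph{The fallback only relocates the difficulty.} In Smoluchowski time one has $\frac{d}{d\tau}\widehat d=-\widehat d\sum_ii^{-\alpha}\widehat c_i\le-\widehat d^{\,2+\alpha}$, so $\widehat d\lesssim\tau^{-1/(1+\alpha)}$ is the easy Jensen direction. The reverse bound $\widehat d\gtrsim\tau^{-1/(1+\alpha)}$ for this negative-homogeneity sum kernel is equivalent to the upper bound on $\mu$ you want. You have not identified a theorem that provides it.

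\textbf{The missing idea.} The paper uses an exact conservation identity that pins down the constant. Set $P(z,t)=\sum_kp_k(t)z^k$ and $B(z,t)=\sum_kk^{-\alpha}p_k(t)z^k$. The triangular equation gives $\partial_tP=B(P-1)$, hence $\int_0^tB(z,s)\,ds=\log\frac{1-P(z,t)}{1-z}\le\log\frac1{1-z}$. It then suffices to control $A_\alpha-B=\sum_kk^{-\alpha}p_k(1-z^k)$ after integration:
\begin{itemize}
\item For $0<\alpha\le1$, the bound $1-z^k\le(k(1-z))^\alpha$ gives $A_\alpha-B\le(1-z)^\alpha$.
\item For $\alpha>1$, one has $A_\alpha-B\le(1-z)A_{\alpha-1}$. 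The decay $A_{\alpha-1}(s)\lesssim(1+s)^{-(\alpha-1)/\alpha}$ comes from the negative-moment inequality $-A_r'\ge c_rA_r^{1+\alpha/r}$. That inequality is the quantitative form of your remark $A_\alpha'\le0$, obtained on $\{Y\ge X\}$ via a covariance argument and Jensen.
\end{itemize}
Choosing $1-z=(1+t)^{-1/\alpha}$ gives $\log\mu(t)\le\frac1\alpha\log(1+t)+O(1)$.

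The negative-moment inequality alone, with $r=\alpha$, gives only $A_\alpha\le(1+c_\alpha t)^{-1}$ with $c_\alpha=(1-2^{-\alpha})/2<\alpha$, again a non-sharp exponent. It is the generating-function identity that makes the bound sharp.
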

\begin{remark}
In particular for any given $\alpha>0$, we have that \(d(t)\asymp(1+t)^{-1/\alpha}\). This is sufficient for the results presented in this paper. However, in order to understand asymptotics as $\alpha\to0$ one needs to use finer estimates. 
\end{remark}

\begin{proof}[Proof of Proposition~\ref{thm:positive-alpha-uniform-cluster}]
Since \(\alpha>0\), the function \(x\mapsto x^{-\alpha}\) is convex on
\((0,\infty)\). Jensen's inequality gives
\[
    A_\alpha(t)
    =
    \mathbb{E}[X_t^{-\alpha}]
    \ge
    \mathbb{E}[X_t]^{-\alpha}
    =
    \mu(t)^{-\alpha}.
\]
Combining this with \eqref{eq:mean-equation-alpha-positive}, we obtain
\[
    \mu'(t)
    \ge
    \mu(t)^{1-\alpha}.
\]
Consequently,
\[
    \frac{d}{dt}\mu(t)^\alpha
    =
    \alpha\mu(t)^{\alpha-1}\mu'(t)
    \ge
    \alpha.
\]
Since the initial configuration consists entirely of singleton clusters,
\(\mu(0)=1\). Integrating from \(0\) to \(t\) therefore yields
\[
    \mu(t)^\alpha
    \ge
    1+\alpha t.
\]
Hence
\begin{equation}
\label{eq:positive-alpha-lower}
    \mu(t)
    \ge
    \left(1+\alpha t\right)^{1/\alpha}.
\end{equation}

For completeness, note that \(X_t\ge1\), and hence
\(A_\alpha(t)\le1\). Thus
\[
    \mu'(t)\le\mu(t),
\]
so that \(\mu(t)\le e^t\) on every finite time interval. In particular,
\(\mu(t)\) cannot diverge in finite time, and the preceding calculation is
valid for every \(t\ge0\).

For fixed \(r>0\), differentiating \eqref{eq:MomentBound} with respect to $t$ and using
\eqref{eq:triangular-equation}, gives for independent
\(X_t,Y_t\sim p(t)\),
\begin{equation}
\label{eq:negative-moment-identity}
    A_r'(t)=\mathbb E\left[
      X_t^{-\alpha}\bigl((X_t+Y_t)^{-r}-X_t^{-r}\bigr)
    \right].
\end{equation}
On \(\{Y_t\ge X_t\}\), the expression $\bigl(X_t^{-r}-(X_t+Y_t)^{-r}\bigr)$ is at least
\((1-2^{-r})X_t^{-r}\). Since
\(x\mapsto x^{-\alpha-r}\) and
\(x\mapsto\mathbb P(Y_t\ge x)\) are both non-increasing, their covariance is
nonnegative. Also \(\mathbb P(Y_t\ge X_t)\ge1/2\) by symmetry. Hence
\[
    -A_r'(t)\ge \frac{1-2^{-r}}2
      \mathbb E[X_t^{-\alpha-r}]
    \ge c_r A_r(t)^{1+\alpha/r},
    \qquad c_r:=\frac{1-2^{-r}}2,
\]
where the last inequality is Jensen's. Therefore
\begin{equation}
\label{eq:negative-moment-decay}
    A_r(t)\le
    \left(1+\frac{\alpha c_r}{r}t\right)^{-r/\alpha}.
\end{equation}


For the reverse inequality, define, for \(0\le z<1\),
\[
    P(z,t):=\sum_{k\ge1}p_k(t)z^k,
    \qquad
    B(z,t):=\sum_{k\ge1}k^{-\alpha}p_k(t)z^k.
\]
Equation \eqref{eq:triangular-equation} implies
\[
    \partial_tP(z,t)=B(z,t)(P(z,t)-1),
\]
and therefore
\begin{equation}
\label{eq:generating-function-integral}
    \int_0^t B(z,s)\,ds
    =\log\frac{1-P(z,t)}{1-z}
    \le\log\frac1{1-z}.
\end{equation}
On the other hand, \(\log\mu(t)=\int_0^tA_\alpha(s)\,ds\).

If \(0<\alpha\le1\), then
\(1-z^k\le(k(1-z))^\alpha\), and hence
\(A_\alpha(s)-B(z,s)\le(1-z)^\alpha\). Thus
\[
    \log\mu(t)\le \log\frac1{1-z}+t(1-z)^\alpha.
\]
Taking \(1-z=(1+t)^{-1/\alpha}\) gives
\(\mu(t)\le e(1+t)^{1/\alpha}\).

If \(\alpha>1\), then
\[
    A_\alpha(s)-B(z,s)
    \le(1-z)A_{\alpha-1}(s)
    \le C_\alpha(1-z)(1+s)^{-(\alpha-1)/\alpha}
\]
by \eqref{eq:negative-moment-decay}. Hence
\[
    \log\mu(t)
    \le\log\frac1{1-z}
      +C_\alpha(1-z)(1+t)^{1/\alpha}.
\]
The same choice \(1-z=(1+t)^{-1/\alpha}\) proves the upper bound. In
particular, \(\mu\) cannot diverge at finite time, so \(T_\alpha=\infty\).
\end{proof}

\subsubsection{\bf{The constant kernel} \(\alpha=0\)} \label{subsec:alpha=0}

For \(\alpha=0\), \eqref{eq:standard-smoluchowski} has constant kernel
\(K_0=2\). The monodisperse solution is classical; see
\cite{DeaconuTanre2000}. Applying the inverse time change
\eqref{eq:inverse-time-change} gives
\begin{equation}
\label{eq:alpha-zero-exact}
    d(t)=e^{-t},
    \qquad
    p_k(t)=e^{-t}(1-e^{-t})^{k-1},
    \qquad
    \mu(t)=e^t.
\end{equation}
Thus \(T_0=\infty\). Moreover, the cluster containing the origin is the sum
of the independent open runs to its left and right, and hence
\[
    \mu(t)=\mathbb E[|\mathfs C_0(t)|]=2e^t-1.
\]
\subsubsection{\bf{The regime \(-1<\alpha<0\)}}
\label{subsubsec:sublinear-accelerating-regime}

Throughout this subsection, set
\[
    q:=-\alpha\in(0,1).
\]
The associated Smoluchowski kernel is
\[
    K_q(i,j)=i^q+j^q.
\]
Since \(i^q\le i\) for \(i\ge1\), this kernel has at most linear growth.
Consequently, the classical theory for the discrete Smoluchowski equation
provides a unique global mass-conserving solution in Smoluchowski time; see
\cite{BallCarr1990,Norris1999}.

Let \((\widehat c_k(\tau))_{k\ge1}\) denote this solution with monodisperse
initial condition
\[
    \widehat c_1(0)=1,
    \qquad
    \widehat c_k(0)=0,\quad k\ge2,
\]
and write
\[
    \widehat d(\tau):=\sum_{k\ge1}\widehat c_k(\tau).
\]
Recall that physical time is given by
\[
    t(\tau):=\int_0^\tau \widehat d(s)\,ds.
\]
Note that the gelation time satisfies
\[
    T_q=\lim_{\tau\to\infty}t(\tau)
    =
    \int_0^\infty\widehat d(\tau)\,d\tau.
\]
For \(t<T_q\), recall that \(\tau(t)\) is the inverse of \(t(\tau)\), and that
\[
    c_k(t)=\widehat c_k(\tau(t)),
    \qquad
    d(t)=\sum_{k\ge1}c_k(t),
    \qquad
    p_k(t)=\frac{c_k(t)}{d(t)}.
\]
By mass conservation,
\[
    \sum_{k\ge1}k c_k(t)=1,
\]
and hence
\begin{equation}
\label{eq:sublinear-density-mean}
    \mu(t)=\sum_{k\ge1}k p_k(t)=\frac1{d(t)}.
\end{equation}
The probability distribution $p(t)$ from \eqref{eq:clusterdensity definition} satisfies the triangular equation \eqref{eq:triangular-equation}.

The main result of this subsection is the following.

\begin{proposition}
\label{thm:sublinear-accelerating}
Let \(q=-\alpha\in(0,1)\). Then
\[
    0<T_q<\infty.
\]
More precisely, define
\begin{equation}
\label{eq:kappa-q-definition}
    \kappa_q
    :=
    \inf_{0<u<1}
    \frac{
        u^q+(1-u)^q-u^{2q}-(1-u)^{2q}
    }{
        2u^q(1-u)^q
    }.
\end{equation}
Then \(\kappa_q>0\), and
\begin{equation}
\label{eq:gel-time-bounds-sublinear}
    \frac1q
    \le
    T_q
    \le
    \min\left\{
        \frac1{\kappa_q},
        \frac1{1-2^{-q}}
    \right\}.
\end{equation}

Furthermore, if
\[
    M_q(t):=A_{\alpha}(t)=\sum_{k\ge1}k^q p_k(t),
\]
then, for every \(t<T_q\),
\begin{equation}
\label{eq:Mq-two-sided-near-explosion}
    \frac1{T_q-t}
    \le
    M_q(t)
    \le
    \frac1{\kappa_q(T_q-t)}.
\end{equation}
Consequently,
\begin{equation}
\label{eq:mu-two-sided-sublinear}
    \bigl(q(T_q-t)\bigr)^{-1/q}
    \le
    \mu(t)
    \le
    \left(
        \frac{T_q}{T_q-t}
    \right)^{1/\kappa_q},
    \qquad 0\le t<T_q,
\end{equation}
and
\begin{equation}
\label{eq:d-two-sided-sublinear}
    \left(
        \frac{T_q-t}{T_q}
    \right)^{1/\kappa_q}
    \le
    d(t)
    \le
    \bigl(q(T_q-t)\bigr)^{1/q}.
\end{equation}
\end{proposition}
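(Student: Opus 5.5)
The plan is to work entirely with the normalized law $p(t)$ and the triangular equation \eqref{eq:triangular-equation}, and to derive closed differential inequalities for $M_q(t)=\mathbb E[X_t^q]$ and for $d(t)$.

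\textbf{Step 1: the evolution of $M_q$.} I would first derive
\[
M_q'(t)=\mathbb E\bigl[X_t^q\bigl((X_t+Y_t)^q-X_t^q\bigr)\bigr],
\]
with $X_t,Y_t$ independent with law $p(t)$. This is \eqref{eq:negative-moment-identity} with $r=-q$ and $\alpha=-q$; the computation is the same, justified for $t<T_q$ since $k^q\le k$ and $\mu(t)<\infty$. Symmetrizing in $X,Y$ gives
\[
M_q'=\tfrac12\,\mathbb E\bigl[(X+Y)^q(X^q+Y^q)-X^{2q}-Y^{2q}\bigr].
\]
Write $s=X+Y$ and $u=X/s$. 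The bracket then equals
\[
s^{2q}\bigl[u^q+(1-u)^q-u^{2q}-(1-u)^{2q}\bigr].
\]
By subadditivity $(x+y)^q\le x^q+y^q$, the bracket is at most $2x^qy^q$. By the definition \eqref{eq:kappa-q-definition} of $\kappa_q$, it is at least $2\kappa_q x^qy^q$. Independence therefore yields the Riccati-type sandwich
\[
\kappa_q M_q^2\le M_q'\le M_q^2 .
\]
To show $\kappa_q>0$, I would observe that the ratio is continuous and positive on $(0,1)$ and symmetric under $u\mapsto 1-u$. Its limit as $u\to0$ is the limit of $u^{-q}\bigl(u^q+(1-u)^q-u^{2q}-(1-u)^{2q}\bigr)/2$. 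Expanding, $(1-u)^q-(1-u)^{2q}=O(u)=o(u^q)$, so this limit is $\tfrac12$ and the infimum is positive.

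\textbf{Step 2: linking $M_q$ to the explosion time.} By \eqref{eq:mean-equation-alpha-positive}, $\log\mu(t)=\int_0^tM_q$. By Jensen (concavity of $x^q$), $M_q\le\mu^q$, so $M_q$ is finite wherever $\mu$ is. Conversely, $M_q$ bounded on $[0,T_q)$ would keep $\mu$ bounded, hence $d=1/\mu$ bounded away from $0$, contradicting the definition of $T_q$ if $T_q<\infty$.

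From $M_q'\ge\kappa_qM_q^2$ and $M_q(0)=1$, we get $M_q(t)\ge(1-\kappa_q t)^{-1}$. This blows up by time $1/\kappa_q$, which gives $T_q\le 1/\kappa_q$ and in particular $T_q<\infty$. With $T_q<\infty$ and $M_q\to\infty$ at $T_q$, integrating $(1/M_q)'$ between $t$ and $T_q$ under the two inequalities gives \eqref{eq:Mq-two-sided-near-explosion}. Integrating $M_q\le 1/(\kappa_q(T_q-s))$ over $[0,t]$ gives the upper bound in \eqref{eq:mu-two-sided-sublinear}, and hence the lower bound in \eqref{eq:d-two-sided-sublinear}.

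\textbf{Step 3: the remaining bounds via $d$.} By \eqref{eq:density-derivative}, $d'=-\sum_k k^qc_k=-dM_q$. Using $M_q\le\mu^q=d^{-q}$ gives $d'\ge-d^{1-q}$, that is, $(d^q)'\ge-q$. Integrating from $t$ to $T_q$, where $d(T_q^-)=0$, yields $d(t)\le(q(T_q-t))^{1/q}$. This is the upper bound in \eqref{eq:d-two-sided-sublinear} and, via $\mu=1/d$, the lower bound in \eqref{eq:mu-two-sided-sublinear}. Evaluating at $t=0$, where $d(0)=1$, gives $T_q\ge1/q>0$.

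\textbf{Main obstacle: the bound $T_q\le 1/(1-2^{-q})$.} I would try the covariance trick from the proof of \eqref{eq:negative-moment-decay}. On $\{Y\ge X\}$ one has $(X+Y)^q-X^q\ge(2^q-1)X^q$, and the relevant monotone functions are positively correlated, as in that proof. However, a direct application only gives something like
\[
M_q'\ge\tfrac{2^q-1}{2}M_q^2,
\]
which is off by a factor of $2$ and $2^q$ from the claimed constant. So I expect to need a sharper ingredient. One option is to apply the Riccati argument to a different functional, for instance $d$ together with $d'=-dM_q$ and $M_q\ge$ a lower bound in terms of $d^{-q}$. Another is to replace $\mathbb P(Y\ge X)\ge\tfrac12$ by a better pairing estimate that exploits both orderings of $X$ and $Y$. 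This is the step where I am least confident the constant comes out exactly as stated.
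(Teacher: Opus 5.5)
Apart from the bound $T_q\le 1/(1-2^{-q})$, your proposal matches the paper's proof. This covers the symmetrized identity for $M_q'$, the sandwich $\kappa_q M_q^2\le M_q'\le M_q^2$, positivity of $\kappa_q$ via the endpoint limit $\tfrac12$, $\log\mu(t)=\int_0^t M_q$, and integrating $(1/M_q)'$ up to $T_q$. Your Step 3 is the paper's inequality $\mu'\le\mu^{q+1}$ written as $(d^q)'\ge -q$. Reading off $T_q\ge 1/q$ at $t=0$ is a tidy substitute for the paper's contradiction argument; it needs $T_q<\infty$ and $d(T_q^-)=0$ first, and you have both.

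One point needs to be made explicit: why $M_q$ cannot stay bounded on $[0,T_q)$. That $d$ is bounded below on $[0,T_q)$ does not by itself contradict $T_q=\sup\{t:d(t)>0\}$. The paper closes this with the global mass-conserving Smoluchowski solution. There $dt/d\tau=d(t(\tau))$ bounded below would force $t(\tau)\to\infty$, contradicting $t(\tau)\uparrow T_q<\infty$. You rely on this both for $M_q\to\infty$ and for $d(T_q^-)=0$.

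For the step you leave open, the paper switches to the negative moment $A_q(t)=\mathbb E[X_t^{-q}]$. By \eqref{eq:negative-moment-identity} with $r=q$, the rate factor $X_t^q$ cancels the test function. With $u=X_t/(X_t+Y_t)$, exchangeability and concavity give
\[
A_q'(t)=\mathbb E\bigl[u^q\bigr]-1=\tfrac12\,\mathbb E\bigl[u^q+(1-u)^q\bigr]-1\le 2^{-q}-1 .
\]
Since $A_q(0)=1$ and $A_q>0$ while the process exists, this gives $T_q\le 1/(1-2^{-q})$. It is a linear decay argument, not a Riccati blowup.

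You do not actually need it, though. Write $a=u^q$ and $b=(1-u)^q$, both in $(0,1)$. The function in \eqref{eq:kappa-q-definition} then satisfies
\[
\frac{a+b-a^2-b^2}{2ab}=\frac{1-a}{2b}+\frac{1-b}{2a}\ge\frac{(1-a)+(1-b)}{2}\ge 1-2^{-q},
\]
again by concavity of $x\mapsto x^q$. Hence $\kappa_q\ge 1-2^{-q}$, and the minimum in \eqref{eq:gel-time-bounds-sublinear} is always $1/\kappa_q$. So the bound $T_q\le 1/\kappa_q$ you already proved gives the full statement. Your sandwich $M_q'\ge\kappa_q M_q^2$ is already the sharper pairing estimate you were looking for: it implies $M_q'\ge(1-2^{-q})M_q^2$.
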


\begin{proof}[Proof of Proposition~\ref{thm:sublinear-accelerating}]
We divide the proof into four steps.

\medskip

\noindent
\textbf{Step 1: a lower bound on \(T_q\).}

By concavity of \(x\mapsto x^q\),
\(
    M_q(t)=\mathbb E[X_t^q]\le \mu(t)^q.
\)
Together with \eqref{eq:mean-equation-alpha-positive} 
 this gives
\(
    \mu'(t)\le \mu(t)^{q+1}.
\)
Since \(\mu(0)=1\),
\begin{equation}
\label{eq:mu-pre-explosion-upper}
    \mu(t)
    \le
    (1-qt)^{-1/q},
    \qquad
    0\le t<\min\{T_q,1/q\}.
\end{equation}

We claim that \(T_q\ge1/q\). To see this, suppose that \(T_q<1/q\), and note that in this case 
\eqref{eq:mu-pre-explosion-upper} would imply
\(
    \sup_{t<T_q}\mu(t)<\infty.
\)
Hence
\[
    \inf_{t<T_q}d(t)
    =
    \inf_{t<T_q}\frac1{\mu(t)}
    >0.
\]
Along the global Smoluchowski solution,
\[
    \frac{dt}{d\tau}=\widehat d(\tau)
    =
    d(t(\tau)).
\]
Therefore \(dt/d\tau\) would be bounded below by a positive constant, forcing
\(t(\tau)\to\infty\) as \(\tau\to\infty\). This contradicts
\(t(\tau)\uparrow T_q<\infty\), and we obtain the claim.

\medskip

\noindent
\textbf{Step 2: two upper bounds on \(T_q\).}

First define the negative moment
\[
    A_q(t):=\mathbb E[X_t^{-q}].
\]
Then \eqref{eq:negative-moment-identity} gives
\[
\begin{aligned}
    A_q'(t)
    =
    \mathbb E\left[
        X_t^q
        \bigl((X_t+Y_t)^{-q}-X_t^{-q}\bigr)
    \right] =
    \mathbb E\left[
        \left(
            \frac{X_t}{X_t+Y_t}
        \right)^q
    \right]-1.
\end{aligned}
\]
By symmetry,
\[
\begin{aligned}
    \mathbb E\left[
        \left(
            \frac{X_t}{X_t+Y_t}
        \right)^q
    \right]
    &=
    \frac12
    \mathbb E\left[
        \left(
            \frac{X_t}{X_t+Y_t}
        \right)^q
        +
        \left(
            \frac{Y_t}{X_t+Y_t}
        \right)^q
    \right].
\end{aligned}
\]
For \(u\in[0,1]\), concavity of \(x\mapsto x^q\) gives
\[
    \frac{u^q+(1-u)^q}{2}\le 2^{-q}.
\]
Consequently,
\[
    A_q'(t)\le -(1-2^{-q}).
\]
Since \(A_q(0)=1\) and \(A_q(t)\ge0\), the solution cannot persist beyond
time \((1-2^{-q})^{-1}\). Hence
\begin{equation}
\label{eq:Tq-upper-negative-moment}
    T_q\le\frac1{1-2^{-q}}.
\end{equation}

We next obtain the second upper bound from \(M_q\). By subadditivity,
\[
    (x+y)^q-x^q\le y^q,
\]
and therefore \eqref{eq:negative-moment-identity} gives
\begin{equation}
\label{eq:Mq-upper-Riccati}
    M_q'(t)\le M_q(t)^2.
\end{equation}

For the reverse inequality, symmetrize:
\[
\begin{aligned}
    2M_q'(t)
    &=
    \mathbb E\left[
        (X_t^q+Y_t^q)(X_t+Y_t)^q
        -X_t^{2q}-Y_t^{2q}
    \right].
\end{aligned}
\]
For \(x,y>0\), put \(u=x/(x+y)\). Then
\[
\begin{aligned}
    &(x^q+y^q)(x+y)^q-x^{2q}-y^{2q} \\
    &\qquad=
    (x+y)^{2q}
    \bigl(
        u^q+(1-u)^q-u^{2q}-(1-u)^{2q}
    \bigr).
\end{aligned}
\]
By the definition of \(\kappa_q\),
\[
    (x^q+y^q)(x+y)^q-x^{2q}-y^{2q}
    \ge
    2\kappa_q x^qy^q.
\]
Thus
\begin{equation}
\label{eq:Mq-lower-Riccati}
    M_q'(t)\ge \kappa_q M_q(t)^2.
\end{equation}

It remains to verify that \(\kappa_q>0\). Indeed, one can directly verify that the function
\[
    h_q(u)
    :=
    \frac{
        u^q+(1-u)^q-u^{2q}-(1-u)^{2q}
    }{
        2u^q(1-u)^q
    }
\]
is continuous and strictly positive on \((0,1)\).
Moreover, recalling that  $q<1$, 
\[
    \lim_{u\downarrow0}h_q(u)
    =
    \lim_{u\uparrow1}h_q(u)
    =
    \frac12.
\]
Hence \(h_q\) extends to a strictly positive continuous
function on \([0,1]\), and therefore \(\kappa_q>0\).

Since \(M_q(0)=1\), \eqref{eq:Mq-lower-Riccati} implies
\[
    \frac1{M_q(t)}
    \le 1-\kappa_qt
\]
as long as the solution exists. Thus
\begin{equation}
\label{eq:Tq-upper-Mq}
    T_q\le\frac1{\kappa_q}.
\end{equation}
Together, \eqref{eq:Tq-upper-negative-moment} and
\eqref{eq:Tq-upper-Mq} prove the upper bound in
\eqref{eq:gel-time-bounds-sublinear}.

\medskip

\noindent
\textbf{Step 3: behavior of \(M_q\) near \(T_q\).}

We first claim that
\begin{equation}
\label{eq:Mq-diverges}
    M_q(t)\longrightarrow\infty
    \qquad\text{as }t\uparrow T_q.
\end{equation}
Indeed, \(M_q\) is nondecreasing by
\eqref{eq:negative-moment-identity}. If it remained bounded on
\([0,T_q)\), then \eqref{eq:mean-equation-alpha-positive} and Gronwall's inequality
would imply that \(\mu\) remains bounded on \([0,T_q)\). As in Step 1, this
would force \(t(\tau)\to\infty\), contradicting \(T_q<\infty\).

From \eqref{eq:Mq-upper-Riccati},
\[
    \frac{d}{dt}\frac1{M_q(t)}
    =
    -\frac{M_q'(t)}{M_q(t)^2}
    \ge -1.
\]
Integrating from \(t\) to \(s<T_q\) and then letting \(s\uparrow T_q\), using
\eqref{eq:Mq-diverges}, gives
\[
    \frac1{M_q(t)}
    \le T_q-t.
\]
Similarly, \eqref{eq:Mq-lower-Riccati} gives
\[
    \frac{d}{dt}\frac1{M_q(t)}
    \le -\kappa_q.
\]
Integration from \(t\) to \(T_q\), and rearranging the inequality, yields
\[
    M_q(t)\le\frac1{\kappa_q(T_q-t)}.
\]
allowing us to conclude \eqref{eq:Mq-two-sided-near-explosion}.

\medskip

\noindent
\textbf{Step 4: behavior of \(\mu\) and \(d\).}

Since \(\mu(0)=1\), equation \eqref{eq:mean-equation-alpha-positive} gives
\[
    \log\mu(t)=\int_0^t M_q(s)\,ds.
\]
Using the upper bound in
\eqref{eq:Mq-two-sided-near-explosion},
\[
\begin{aligned}
    \log\mu(t)
    \le
    \frac1{\kappa_q}
    \int_0^t\frac{ds}{T_q-s} =
    \frac1{\kappa_q}
    \log\left(\frac{T_q}{T_q-t}\right).
\end{aligned}
\]
giving the desired upper bound on $\mu(t)$. 

For the lower bound, return to
\[
    \mu'(t)\le\mu(t)^{q+1}.
\]
By the argument in Step 3, $\mu(t)\rightarrow\infty$ as $t\uparrow T_q$.
Integrating from \(t\) to \(T_q\) therefore gives $\mu(t)^{-q}\le q(T_q-t),$ or $\mu(t)\ge
    \bigl(q(T_q-t)\bigr)^{-1/q}$. 
This proves \eqref{eq:mu-two-sided-sublinear}. The density bounds
\eqref{eq:d-two-sided-sublinear} follow immediately from
\(d(t)=1/\mu(t)\).

Finally, \(d(t)\downarrow0\) as \(t\uparrow T_q\). For every fixed edge \(e\),
\[
    \mathbb P(e\text{ is closed at time }t)=d(t).
\]
Since edges only open, continuity from above gives
\[
    \mathbb P(e\text{ remains closed up to }T_q)
    =
    \lim_{t\uparrow T_q}d(t)=0.
\]
There are countably many edges in \(\mathbb Z\), so almost surely every edge
is open by time \(T_q\). Thus \(T_q\) is the explosion time of the 
Cluster-Cluster model.
\end{proof}

\begin{remark}
The kernel \(K_q(x,y)=x^q+y^q\), \(q\in(0,1)\), is non-gelling in the usual
Smoluchowski time. Self-similar profiles for this kernel, including their
regularity, exponential decay at infinity, and behavior near the origin, were
studied in \cite{FournierLaurencot2006}. Those results concern properties of
self-similar profiles and do not by themselves imply convergence of the
monodisperse solution to such a profile. Accordingly, we do not claim the
matching asymptotic
\[
    \mu(t)\asymp (T_q-t)^{-1/q}.
\]
The bounds in \eqref{eq:mu-two-sided-sublinear} are the quantitative estimates
proved here directly from the triangular equation.
\end{remark}

\subsubsection{\bf{The additive kernel} \(\alpha=-1\)}\label{subsec:alpha-1}

For \(\alpha=-1\), \eqref{eq:standard-smoluchowski} has the additive kernel
\(K(i,j)=i+j\). The monodisperse solution is classical; see
\cite{DeaconuTanre2000}. Since its cluster density in Smoluchowski time is
\(e^{-\tau}\), the inverse time change gives \(t=1-e^{-\tau}\). Consequently,
for \(0\le t<1\),
\begin{equation}
\label{eq:alpha-minus-one-exact}
    T_{-1}=1,
    \qquad
    d(t)=1-t,
    \qquad
    \mu(t)=\frac1{1-t},
\end{equation}
and the law of a uniformly chosen cluster is
\[
    p_k(t)=\frac{(tk)^{k-1}}{k!}e^{-tk},
    \qquad k\ge1.
\]
The size-biased law of the cluster containing the origin therefore satisfies
\[
    \mathbb E[|\mathfs{C}_0(t)|]=\frac1{(1-t)^2}.
\]
At time \(1\), every edge is open almost surely.

\subsubsection{\bf{The regime} \(\alpha<-1\)}\label{subsec:alpha<-1}

Write \(q=-\alpha>1\). Carr and da Costa \cite{CarrDaCosta1992} prove instantaneous gelation for
discrete coagulation kernels bounded below by a generalized product kernel
with one exponent larger than one; their result applies to
\(K(i,j)=i^q+j^q=i^qj^0+i^0j^q\). If the spatial process were explosion-free on a
nontrivial physical-time interval, the renewal reduction and
\eqref{eq:smol-time-change} would produce a mass-conserving solution of the
corresponding Smoluchowski equation on a nontrivial Smoluchowski-time interval,
contradicting that theorem. Hence for $\alpha<-1$, 
\(
    T_\alpha=0.
\)
\begin{proof}[Proof of Theorem \ref{thm:one-dimensional-phase-diagram}]
(1) is covered by Proposition~\ref{thm:positive-alpha-uniform-cluster}. (2) is covered by Section~\ref{subsec:alpha=0}. (3) is covered by Proposition~\ref{thm:sublinear-accelerating}
 (4) is covered by Section~\ref{subsec:alpha-1}. (5) is covered by Section~\ref{subsec:alpha<-1}.
\end{proof}

\addtocontents{toc}{\protect\setcounter{tocdepth}{0}}

\section*{Acknowledgments}
The authors would like to thank Gidi Amir for introducing us to the model and questions and to Balazs Rath for very useful comments during the "Workshop on Disordered media" that took place at the Renyi Institute. This project was partially supported by DFG grant 5010383 and COST grant CA24122. Much of this work was done while the third author was funded by the Packard Foundation via Amol Aggarwal’s Packard Fellowships for Science and Engineering and by the Simons Foundation via
Ivan Corwin’s Investigator Award. 

\addtocontents{toc}{\protect\setcounter{tocdepth}{2}}

\bibliography{ri}
\bibliographystyle{plain}

%
%


\appendix
\section{Alternative proof for absence of blow-up}\label{appendix}
 We establish that there is no blowup in the case $\alpha\ge 1$. While only treating a sub-regime of  Theorem~\ref{thm:finite_clusters}, this route has the advantage of yielding effective bounds on the underlying cluster sizes. 
\begin{lemma}\label{lem:infinite doubling}
    Let $\alpha \ge 1$. With probability one, a cluster that has connections solely to finite clusters is finite.
\end{lemma}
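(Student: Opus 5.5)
We interpret the statement as follows. Fix a cluster $\mathfs{C}$ and follow it through time. Suppose every cluster it merges with is finite at the moment of merging. We must show that $|\mathfs{C}(t)|$ stays finite for all finite $t$ almost surely. In other words, $\mathfs{C}$ cannot absorb infinitely many finite clusters within a bounded time window.

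The idea is to count the clock rings of $\mathfs{C}$ itself and use the slowdown for $\alpha\ge1$. A merger happens in one of two ways. In the first, $\mathfs{C}$'s own clock rings and it is blocked. In the second, a neighbouring finite cluster $\mathfs{D}$ rings and is blocked by $\mathfs{C}$. The size-doubling structure should help here: an \emph{active} merger initiated by $\mathfs{C}$ occurs at rate at most $|\mathfs{C}|^{-\alpha}$. A \emph{passive} merger initiated by $\mathfs{D}$ adds $|\mathfs{D}|$ vertices, and $\mathfs{D}$ rings at rate $|\mathfs{D}|^{-\alpha}$.

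The plan is to track the doubling times
\[
\sigma_j:=\inf\{t:|\mathfs{C}(t)|\ge 2^j\}.
\]
We want to show that $\sum_j(\sigma_{j+1}-\sigma_j)=\infty$ almost surely. That rules out infinitely many doublings in finite time, so $|\mathfs{C}(t)|<\infty$ for every $t$.

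\textbf{Step 1: active growth.} While $|\mathfs{C}|\in[2^j,2^{j+1})$, the own clock rings at rate at most $2^{-j\alpha}\le 2^{-j}$. To double through active mergers alone, a single ring could absorb a large neighbour. So we bound the number of rings needed by the number of distinct partner clusters, not by their sizes.

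\textbf{Step 2: passive growth.} The neighbours of $\mathfs{C}$ lie in its edge boundary, which has at most $2d|\mathfs{C}|$ sites. So at most $2d\,2^{j+1}$ clusters are adjacent. A neighbour $\mathfs{D}$ rings at rate $|\mathfs{D}|^{-\alpha}\le |\mathfs{D}|^{-1}$. Hence the rate of passive mass influx is at most
\[
\sum_{\mathfs{D}\sim\mathfs{C}}|\mathfs{D}|\cdot|\mathfs{D}|^{-\alpha}\le 2d\,2^{j+1}.
\]
This uses $\alpha\ge1$ crucially. Combined with Step 1, the total mass gain rate while at scale $j$ is $O(2^j)$. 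The exception is a single active jump absorbing an enormous neighbour, and that happens at rate at most $2^{-j}$.

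\textbf{Step 3: comparison.} Steps 1 and 2 suggest that $|\mathfs{C}|$ grows at most exponentially: $\frac{d}{dt}\mathbb E|\mathfs{C}|\lesssim \mathbb E|\mathfs{C}|$. This can be made rigorous via a supermartingale for $\log|\mathfs{C}(t)|$, stopped at the doubling times.

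The main obstacle is the large-jump issue. One merger with a huge but finite neighbour can multiply the size by an unbounded factor, so the mass-rate bound alone does not control $\log|\mathfs{C}|$. My first attempt would be a symmetric accounting. In any merger of clusters of sizes $a\le b$, the increase in $\log$ of the merged size is at most $\log 2$ plus the increase attributable to the larger partner. Doubling chains then form a binary tree in which each doubling of $\mathfs{C}$ costs an independent exponential time of rate at most $2\cdot 2d$ after normalization. An infinite chain of doublings in finite time would then force a sum of i.i.d.-dominated exponentials to converge, which has probability zero by Borel--Cantelli.
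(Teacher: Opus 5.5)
Your Steps 1 and 2 contain the right rate bounds: the own clock rings at rate at most $|\mathfs{C}|^{-\alpha}\le 1$, and the passive mass influx on a window starting at size $m$ has rate at most $4dm$, which is where $\alpha\ge 1$ enters. The gap is the step that turns these bounds into $\sum_j(\sigma_{j+1}-\sigma_j)=\infty$ almost surely.

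\begin{itemize}
\item The supermartingale route for $\log|\mathfs{C}|$ fails, for the reason you give yourself.
\item The proposed repair is not an argument. The ``symmetric accounting'' and the ``binary tree'' are never defined.
\item The claim that each doubling costs an independent exponential time of rate at most $4d$ cannot hold uniformly. The doubling windows are not independent. When the neighbours are mostly singletons, the passive influx consists of $\Theta(m)$ unit jumps arriving at total rate $\Theta(m)$. The window length then concentrates, and $\mathbb{P}(\Delta>t)$ is exponentially small in $m$ once $t$ exceeds its typical value, so no fixed exponential is stochastically below it.
\item With absolute dyadic levels $\sigma_j=\inf\{t:|\mathfs{C}(t)|\ge 2^j\}$, no uniform lower bound on $\sigma_{j+1}-\sigma_j$ can hold. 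A jump may land at $2^{j+1}-1$, and a single singleton then crosses the next level.
\end{itemize}

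The missing idea is that the large jump stops being a problem once you do two things. First, use relative doubling times $\mathcal{T}_{n+1}=\inf\{t:|\mathfs{C}_0(t)|\ge 2|\mathfs{C}_0(\mathcal{T}_n)|\}$. Second, aim only for a uniform conditional bound $\mathbb{P}(\mathcal{T}_{n+1}-\mathcal{T}_n>\delta\mid\mathcal{F}_{\mathcal{T}_n})\ge\varepsilon$; this is what the paper proves.

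A jump that overshoots $2m$ merely ends the window. Its size can therefore be capped at $m$ without changing the event of doubling before time $\delta$, and the paper does exactly this for the cluster's own ring. In the paper's rejection-sampling coupling, each of the at most $4dm$ rate-one clocks then adds expected mass at most $v^{1-\alpha}\le 1$ per ring, with $v=m$ for the capped own ring. Markov's inequality together with a Poisson tail bound then gives $\mathbb{P}(\Delta\mathcal{T}_n\le\delta)\le 1/c+\mathbb{P}(\mathrm{Pois}(4dm\delta)\ge m/c)$.

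You can also get this more directly from your own Steps 1 and 2. An own ring occurs in $[0,\delta\wedge\tau]$ with probability at most $\delta m^{-\alpha}\le\delta$. Without such a ring, doubling requires passive influx of at least $m$, and the compensator of that influx up to $\delta\wedge\tau$ is at most $4dm\delta$. Hence $\mathbb{P}(\Delta\mathcal{T}_n\le\delta\mid\mathcal{F}_{\mathcal{T}_n})\le(4d+1)\delta$.

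L\'evy's conditional Borel--Cantelli lemma then gives infinitely many windows longer than $\delta$, so infinitely many doublings cannot occur in finite time.
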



The general idea for the proof is to consider the growth rate of $\mathfs{C}_0$ up until doubling. 
Starting from size $m$, we can bound the boundary of the cluster by $4dm$ up until doubling. 
If we look at the expected size increase from each potential cluster on the boundary with size $k$, since $\alpha \ge 1$ we get $k^{1-\alpha} \le 1$.
We also must consider the expected size increase from $\mathfs{C}_0$ moving into another cluster, but since we only care about doubling time, any size above $m$ is identical, hence we can bound by $m^{1-\alpha} \le 1$.
The result is that after $t$ time, we gain $4dmt$ size, which gives that the expectation of the doubling times is bounded below by $\frac{1}{4d}$.  
The technical part is to take the first moment heuristic into high-probability estimation.

\begin{proof}
Define the sequence of {\it doubling times}: 
\begin{equation}
    \begin{aligned}
        \mathcal{T}_1 = 0 \quad \text{and} \quad \mathcal{T}_n = \inf\{t:|\mathfs{C}_0(t)|\ge2|\mathfs{C}_{0}(\mathcal{T}_{n-1})|\},
    \end{aligned}
\end{equation}
with $\mathcal{T}_n=\infty$, if $|\mathfs{C}_{0}(\mathcal{T}_{n-1})|=\infty$.
Denote $\Delta \mathcal{T}_n = \mathcal{T}_{n+1} - \mathcal{T}_n$.
We want to show that there exists  $\delta,\ep >0$, such that for all $n$, $\Prob{\Delta \mathcal{T}_n > \delta|\mathfs{F}_{\tau_n} }\ge \ep$.
To do this, we consider the construction of the process via rejection sampling. 
In this construction, each cluster $\mathfs{C}$ has a rate $1$ Poisson clock  and a generator of $\operatorname{Unif}[0,1]$-distributed random variables. When a clock rings at time $t$, the cluster will move if for the next $U \sim \operatorname{Unif}[0,1]$ random variable when $U < \abs{\mathfs{C}(t)}^{-\alpha}$.
We assert that all clocks and the uniform random variables are independent.
Now consider starting from time $\mathcal{T}_n$. If $\abs{\mathfs{C}_0(\mathcal{T}_n)} = \infty$ then we only have $n<\infty$ doublings, thus there were finitely many connections, and so since $\abs{\mathfs{C}_0(\mathcal{T}_n)} = \infty$, one of the connections must be to an infinite cluster. So, assume $\abs{\mathfs{C}_0(\mathcal{T}_n)} = m$. For simplicity, we will shift the time so that $\mathcal{T}_n = 0$, and denote $\tau = \mathcal{T}_{n+1}$.
Using this construction, $\mathfs{C}_0$ can only grow when one of the two options happens:
\begin{enumerate}
    \item a clock of a direct neighbor rings, at which point, given the size of the neighbor $m'$, it will grow by $m'$ if $U< m'^{-\alpha}$.
    \item $\mathfs{C}_0$'s clock rings, in which case we must have that $U < m^{-\alpha}$. This at worst may lead to doubling, so we can upper bound the size increase by $m$, since any larger increase is equivalent in terms of the next doubling time. Note that this is identical in terms of size to having a clock of a neighbor of size $m$ ring.
\end{enumerate}
Since at any given time up to the doubling the size is bounded by $2m-1$ and thus the outer boundary by $4dm-2d$ there are at most $4dm-2d+1 \le 4dm$ rate $1$ Poisson clocks. We can see in the following construction that the size (up to time $\tau$) is upper bounded by a jump process of rate $4dm$, with random jumps of size $m_i \indicator{U_i<m_i^{-\alpha}}$ where $m_i$ are also random, and $(U_i)_{i \in \BN}$ are independent copies of $U$. We construct the process in the following way:

First sample $U_{i,k} \sim \operatorname{Unif}[0,1]$ and $t_{i,k} \sim \textup{Exp}(1)$ for $i\in \BN$ and $k=0,1,2,...,4dm-1$. Run the Cluster-Cluster model while using $t_{i,k},U_{i,k}$ as the clock and generator of the $k^{\text{th}}$ neighboring cluster, with $k=0$ referring instead to $\mathfs{C}_0$. Note that the neighboring clusters may change, and that at any given time we consider the clusters which are currently adjacent. Also note that we may have more clocks than neighboring clusters, in which case we consider "ghost clusters of size 0" which do not have any effect. Now using the Cluster-Cluster model, we take $m_{i,k}$ to be the size of the cluster which we gave clock $t_{i,k}$ to at the time the clock rang. Note that we always take $m_{i,0} = m$.

Note that for each $k$, $t_{i,k},m_{i,k},U_{i,k}$ we get a jump process as described above with rate 1. Combining the $4dm$ processes gives a single jump process. For this process, we define $t_i,U_i,m_i$ accordingly using $t_{i,k},m_{i,k},U_{i,k}$. Note that we created this process via coupling to the Cluster-Cluster model. We will denote the law of this coupling $\Prob[C]{\cdot}$, and we will also denote the expectation under it by $\E[C]{\cdot}$. We will also denote the jump process by $N(t) = N([0,t])$, and the number of arrivals using $\Tilde{N}(t) = \Tilde{N}([0,t])$, so that $N(t) = \sum_{i=1}^{\Tilde{N}(t)}{m_i\indicator{U_i<m_i^{-\alpha}}}$. Note the $\Tilde{N}$ is a Poisson Process with jumps of size 1 and rate 1.
We can see that under this coupling, the doubling time in the Cluster-Cluster model happened after the Poisson process reached size $m$. 
Note that $m_i$ is independent of $U_i$ even given past arrival times $t_j$, $j\le i$, due to Markov property. 
Note that since $m_i$ and $U_i$ are independent of future arrival times $t_j$ for $j>i$, so overall we get that $m_i$ is independent of $U_i$ given all arrival times $t_j$. We get that
\begin{equation*}
    \begin{aligned}
        \Prob{\Delta \mathcal{T}_n \le \delta} \le \Prob[C]{N(\delta)\ge m}&= \sum_{k=1}^{\infty}\Prob[C]{\Tilde{N}(\delta)=k,\sum_{i=1}^{k}{m_i\indicator{U_i<m_i^{-\alpha}}}\ge m}\\
        &= \sum_{k=1}^{\infty}\Prob[C]{\Tilde{N}(\delta)=k} \Prob{\sum_{i=1}^{k}{m_i\indicator{U_i<m_i^{-\alpha}}}\ge m \Bigg| \Tilde{N}(\delta)=k}.\\
    \end{aligned}
\end{equation*}
Calculating the expectation of $\sum_{i=1}^{k}{m_i\indicator{U_i<m_i^{-\alpha}}}$ conditioned on $\Tilde{N}(\delta)=k$
\begin{equation}\label{sumBound2}
    \begin{aligned}
        &\E[C]{\sum_{i=1}^{k}{m_i\indicator{U_i<m_i^{-\alpha}}} \Bigg| \Tilde{N}(\delta)=k} = \sum_{i=1}^{k}\E[C]{{m_i\indicator{U_i<m_i^{-\alpha}}} \Bigg| \Tilde{N}(\delta)=k} \\
        &\le \sum_{i=1}^{k}\sum_{v=1}^{\infty}\E[C]{{v\indicator{U_i<v^{-\alpha}}} \Bigg| \Tilde{N}(\delta)=k}\Prob[C]{m_i = v | \Tilde{N}(\delta)=k} \\
        &= \sum_{i=1}^{k}\sum_{v=1}^{\infty} v^{1-\alpha} \Prob[C]{m_i = v | \Tilde{N}(\delta)=k}  \le \sum_{i=1}^{k}\sum_{v=1}^{\infty} \Prob[C]{m_i = v | \Tilde{N}(\delta)=k} \leq k,
    \end{aligned}
\end{equation}
where in the first inequality we use the law of total probability, the fact that if $v=\infty,0$ then $v\indicator{U_i<v^{-\alpha}} = 0$ almost surely, and the conditional independence of $m_i,U_i$. In the last inequality we use the fact that $\alpha \ge 1$ and $v\ge 1$. Using Markov's inequality we get
\begin{equation}\label{markovInequality}
    \begin{aligned}
        \Prob[C]{\sum_{i=1}^{k}{m_i\indicator{U_i<m_i^{-\alpha}}}\ge m \Bigg| \Tilde{N}(\delta)=k} \le \frac{k}{m} . 
    \end{aligned}
\end{equation} 
Continuing \eqref{sumBound2} we get for any choice of $c$ such that $\frac{m}{c}\in \mathds{Z}$ and $c>4$: 
\begin{equation}\label{sumBound_3}
    \begin{aligned}
        \Prob{\Delta \mathcal{T}_n \le \delta} &\le \sum_{k=1}^{\infty}\Prob[C]{\Tilde{N}(\delta)=k} \min\rb{1,\frac{k}{m}} \\
        &\le \sum_{k=1}^{\frac{m}{c}}\Prob[C]{\Tilde{N}(\delta)=k} \frac{k}{m} + \sum_{\frac{m}{c}+1}^{\infty}\Prob[C]{\Tilde{N}(\delta)=k}\\
        &\le \sum_{k=1}^{\frac{m}{c}}\Prob[C]{\Tilde{N}(\delta)=k} \frac{\frac{m}{c}}{m} + \sum_{\frac{m}{c}+1}^{\infty}\Prob[C]{\Tilde{N}(\delta)=k} \\
        &\le \frac{1}{c} + \Prob[C]{\Tilde{N}(\delta) \ge \frac{m}{c}+1} \\
        &\le \Prob[C]{\operatorname{Pois}(4dm\delta) \ge \frac{m}{c}} + \frac{1}{c} \\
        &\le e^{-m\delta_d h(\frac{1}{\delta_d c}-1)} + \frac{1}{c}
    \end{aligned}
\end{equation}
Where $\delta_d = 4d\delta$, we use Bennett's inequality, requiring $\frac{1}{\delta_d c} \ge 2$ and $h(u) = (1+u)\log\rb{1+u}-u$. As we have previously seen, we also require $c>4$. We can also assume $m$ is large enough, since after $n$ doubling, the size is at least $2^n$. Testing numerically, we can see that we can choose appropriate $\delta$ and $c$.
\end{proof}
Using Lemma \ref{lem:infinite doubling}, we are now ready to prove the main result of this section. 
The main idea in the proof of Theorem \ref{thm:finite_clusters} is that since a cluster must have an infinite neighbor to become infinite, we see that for a cluster of size $m$ to become infinite in the time window $[t,t+\ep]$, it must move, and at least one of the (at most) $2dm$ clusters adjacent to it must already be infinite. 
If we denote the probability for a cluster being infinite at time $t$ by $f(t)$ and use a union bound, we get $2dmf(t)$ for the probability for an infinite neighbor, and $\frac{\ep}{m}$ for the probability of the cluster moving. 
Together this gives $2df(t)\ep$, and after we take $\ep \to 0$, we will get a differential equation with solution $f(t) \le e^{ct}$. Since requiring a cluster to move in the time interval $[0,t]$ gives $f(t)\le t$, we will get that $f(t)$ must be 0.


\begin{proof}[Proof of Theorem \ref{thm:finite_clusters}, case $\alpha\ge 1$]
Denote by $K_x(t)$ the cluster that contains the vertex $x$ at time $t$. Note that this may refer to different clusters at different times. 
Consider $\varkappa = \inf\cb{t' \ge 0 :\Prob{K_0(t') = \infty}>0}$.
Assume by contradiction that $\varkappa<\infty$, and define $f(s) = \Prob{K_0(\varkappa+s) = \infty}$.
Let $\ep>0$. By Lemma~\ref{lem:infinite doubling}, for a vertex $x$, to first join an infinite cluster in the time interval $[\varkappa+s,\varkappa+s+\ep]$, $K_x(\cdot)$ must connect to an already infinite cluster, by moving in this time interval.
Note that if we apply this to the time interval $[\varkappa - \ep, \varkappa + s]$, by definition of $\varkappa$, gives us that $f(s) \le s+\ep$ for all $s\ge 0$. Taking $\ep \to 0$ we get $f(s) \le s$. 
Denote by $T_z(\nu,\eta)$ the amount of times that the clock of cluster $K_z$ rang in the time interval $(\varkappa+\nu,\varkappa+\eta]$.
We emphasize that this does not count the amount of times a clock of a unique cluster rings, but instead it counts the amount of times there was a cluster containing vertex $z$ at some time $t\in(\varkappa+\nu,\varkappa+\eta]$ whose clock rang at time $t$.
We also denote by $I_z(\nu,\eta) = \cb{|K_z(\varkappa+\eta)| = \infty, |K_z(\varkappa+\nu)|<\infty}$ the event that the cluster $K_z$ became infinite in the time interval $(\varkappa+\nu,\varkappa+\eta]$. This gives
\begin{equation}\label{eq:moivein}
    \begin{aligned}
            f(s+\ep) - f(s) = \Prob{I_0(s,s+\ep)} \le \Prob{I_0(s,s+\ep),T_0(s,s+\ep) = 1} + O(\ep^2)
    \end{aligned}
\end{equation}
since the cluster must move to become infinite, and since the probability of moving twice is at most $\ep^2$. 
Note that if $K_0=\emptyset$, then a neighboring cluster must move twice which happens with probability at most $2d\ep^2$. 
Under the event $\cb{I_0(s,s+\ep),T_0(s,s+\ep) = 1}$, since 0 moves only once in interval $(\varkappa+s,\varkappa+s+\ep]$, there must be an infinite cluster at time $\varkappa+s$ at its outer boundary $\partial K_0(\varkappa+s)$, since otherwise the cluster would be surrounded by non-infinite clusters, which implies that it is also not infinite. Continuing \eqref{eq:moivein}:
\begin{equation}\label{eq:onemove}
    \begin{aligned}
            f(s+\ep) - f(s) &= \Prob{I_0(s,s+\ep),T_0(s,s+\ep) = 1,\cup_{x \in \partial K_0(\varkappa+s)}{I_x(0,s)}} \\
            &+ \Prob{I_0(s,s+\ep),T_0(s,s+\ep) = 1,\cup_{x \in \partial K_0(\varkappa+s)}{I_x(s,s+\ep)}} + O(\ep^2) \\
            &\le \Prob{T_0(s,s+\ep) = 1,\cup_{x \in \partial K_0(\varkappa+s)}{I_x(0,s)}} \\
            &+ \Prob{T_0(s,s+\ep) = 1,\cup_{x \in \partial K_0(\varkappa+s)}{T_x(s,s+\ep) \ge 1}} + O(\ep^2). 
        \end{aligned}
\end{equation}
Now conditioning on $K_0(\varkappa+s)$, denoting by $\mathcal{S}_m\subset (\BZ^d)^m$ the set of connected clusters of size $m$. Continuing \eqref{eq:onemove}, we see that $f(s+\ep) - f(s)$ is bounded from above by 
\begin{equation*}
    \begin{aligned}
           &= \sum_{m=1}^{\infty}\sum_{\mathfs{C} \in \mathcal{S}_m}\Prob{T_0(s,s+\ep) = 1,\cup_{x \in \partial \mathfs{C}}{I_x(0,s)}\Big|K_0(\varkappa+s) = \mathfs{C}}\Prob{K_0(\varkappa+s) = \mathfs{C}} \\
            &+ \sum_{m=1}^{\infty}\sum_{\mathfs{C} \in \mathcal{S}_m}\Prob{T_0(s,s+\ep) = 1,\cup_{x \in \partial \mathfs{C}}{T_x(s,s+\ep) \ge 1}\Big|K_0(\varkappa+s) = \mathfs{C}}\Prob{K_0(\varkappa+s) = \mathfs{C}} + O(\ep^2) \\
            &= \sum_{m=1}^{\infty}\sum_{\mathfs{C} \in \mathcal{S}_m}\Prob{T_0(s,s+\ep) = 1\Big| \cup_{x \in \partial \mathfs{C}}{I_x(0,s)},K_0(\varkappa+s) = \mathfs{C}} \\ 
            & \qquad \quad \cdot \Prob{\cup_{x \in \partial \mathfs{C}}{I_x(0,s)}\Big|K_0(\varkappa+s) = \mathfs{C}}\Prob{K_0(\varkappa+s) = \mathfs{C}} \\
            &+ \sum_{m=1}^{\infty}\sum_{\mathfs{C} \in \mathcal{S}_m}\Prob{T_0(s,s+\ep) = 1\Big| \cup_{x \in \partial \mathfs{C}}{T_x(s,s+\ep) \ge 1},K_0(\varkappa+s) = \mathfs{C}} \\
            & \qquad \quad \cdot \Prob{\cup_{x \in \partial \mathfs{C}}{T_x(s,s+\ep) \ge 1}\Big|K_0(\varkappa+s) = \mathfs{C}}\Prob{K_0(\varkappa+s) = \mathfs{C}} + O(\ep^2) \\
            &\le \sum_{m=1}^{\infty}\frac{\ep}{m}\sum_{\mathfs{C} \in \mathcal{S}_m}\sum_{x \in \partial \mathfs{C}}\Prob{{I_x(0,s)}\Big|K_0(\varkappa+s) = \mathfs{C}}\Prob{K_0(\varkappa+s) = \mathfs{C}} \\
            &+ \sum_{m=1}^{\infty}\frac{\ep}{m}\sum_{\mathfs{C} \in \mathcal{S}_m}\sum_{x \in \partial \mathfs{C}} \Prob{{T_x(s,s+\ep) \ge 1}\Big|K_0(\varkappa+s) = \mathfs{C}}\Prob{K_0(\varkappa+s) = \mathfs{C}} + O(\ep^2) \\ 
            &\le \sum_{m=1}^{\infty}\frac{\ep}{m}\sum_{\mathfs{C} \in \mathcal{S}_m}\sum_{x \in \partial \mathfs{C}}\Prob{{I_x(0,s)},K_0(\varkappa+s) = \mathfs{C}} + \sum_{m=1}^{\infty}\frac{\ep}{m}\sum_{\mathfs{C} \in \mathcal{S}_m}2dm\ep\Prob{K_0(\varkappa+s) = \mathfs{C}} + O(\ep^2) 
    \end{aligned}
\end{equation*}
where in the last inequality, we use the independence of clocks of different clusters, $\alpha \ge 1$, and the independence of the clock at time interval $(\varkappa+s,\varkappa+s+\ep)$ to what happened in previous times (by Markov property). 
We can now focus on the first term and use translation invariance (where $\mathfs{C}-x$ denotes translation by $x$). 
Let $E_t(\mathfs{C})$ be the event that the cluster $\mathfs{C}$ is a cluster in the model at time $t$. Note that $\forall x\in \BZ^d: \cb{K_x(t) = \mathfs{C}} = E_t(\mathfs{C})\indicator{x\in \mathfs{C}}$. This gives: 
\begin{equation}
    \begin{aligned}
    &\sum_{m=1}^{\infty}\frac{\ep}{m}\sum_{\mathfs{C} \in \mathcal{S}_m}\sum_{x \in \partial \mathfs{C}}\Prob{{I_x(0,s)},K_0(\varkappa+s) = \mathfs{C}} \\
            &= \sum_{m=1}^{\infty}\frac{\ep}{m}\sum_{\mathfs{C} \in \mathcal{S}_m}\sum_{x \in \partial \mathfs{C}}\Prob{{I_0(0,s)}, K_{-x}(\varkappa+s) = \mathfs{C}-x} \\
            &= \sum_{m=1}^{\infty}\frac{\ep}{m}\sum_{\mathfs{C} \in \mathcal{S}_m}\sum_{x \in \mathds{Z}^d}\Prob{I_0(0,s), E_{\varkappa+s}(\mathfs{C}-x)}\indicator{0\in \mathfs{C}}\indicator{x \in \partial \mathfs{C}}\\
            &= \sum_{m=1}^{\infty}\frac{\ep}{m}\sum_{x \in \mathds{Z}^d}\sum_{\mathfs{C} \in \mathcal{S}_m}\Prob{I_0(0,s), E_{\varkappa+s}(\mathfs{C}-x)}\indicator{-x\in \mathfs{C}-x}\indicator{0 \in \partial (\mathfs{C}-x)}\\
    \end{aligned}
\end{equation}
Now note that the sum over $\mathfs{C}\in \mathcal{S}_m$ is the same as the sum over $(\mathfs{C}-x) \in \mathcal{S}_m$: 
\begin{equation}\label{eq:SecondToLast}
    \begin{aligned}
            &= \sum_{m=1}^{\infty}\frac{\ep}{m}\sum_{x \in \mathds{Z}^d}\sum_{\mathfs{C} \in \mathcal{S}_m}\Prob{I_0(0,s), E_{\varkappa+s}(\mathfs{C})}\indicator{-x\in \mathfs{C}}\indicator{0 \in \partial \mathfs{C}} \\
            &= \sum_{m=1}^{\infty}\frac{\ep}{m}\sum_{\mathfs{C} \in \mathcal{S}_m}\Prob{I_0(0,s), E_{\varkappa+s}(\mathfs{C})}\indicator{0 \in \partial \mathfs{C}}\sum_{x \in \mathds{Z}^d}\indicator{-x\in \mathfs{C}} \\
            &= \sum_{m=1}^{\infty}\frac{\ep}{m}\sum_{\mathfs{C} \in \mathcal{S}_m}\Prob{I_0(0,s), E_{\varkappa+s}(\mathfs{C})}\indicator{0 \in \partial \mathfs{C}}m \\
            &\le \sum_{m=1}^{\infty}\frac{\ep m}{m}\sum_{\mathfs{C} \in \mathcal{S}_m}\Prob{I_0(0,s), E_{\varkappa+s}(\mathfs{C})}\sum_{i=1}^{2d}{\indicator{e_i \in \mathfs{C}}} \\
            &= \sum_{m=1}^{\infty}\ep\sum_{i=1}^{2d}\sum_{\mathfs{C} \in \mathcal{S}_m}\Prob{I_0(0,s), E_{\varkappa+s}(\mathfs{C})}{\indicator{e_i \in \mathfs{C}}} \\
    \end{aligned}
\end{equation}
Now using rotational symmetry, continuing \eqref{eq:SecondToLast}:

\begin{equation}
    \begin{aligned}
            &= \sum_{m=1}^{\infty}2d\ep\sum_{\mathfs{C} \in \mathcal{S}_m}\Prob{I_0(0,s), E_{\varkappa+s}(\mathfs{C})}\indicator{e_1 \in \mathfs{C}} \\
            &= 2d\ep\sum_{m=1}^{\infty}\sum_{\mathfs{C} \in \mathcal{S}_m}\Prob{I_0(0,s), E_{\varkappa+s}(\mathfs{C})}\indicator{e_1 \in \mathfs{C}} \\
            &\le 2d\ep\Prob{I_0(0,s)} = 2d\ep f(s),
    \end{aligned}
\end{equation}
 where the events are disjoint since we sum on different $\mathfs{C}$ and they all contain $e_1$.
Dividing by $\ep$ and taking the limit as $\ep \to 0$ we get,
\begin{equation}
    \begin{aligned}
        f'(s) \le 2df(s)
    \end{aligned}
.\end{equation}
Applying Gr\"onwall's inequality, starting at $\ep$, since $f$ is monotone (trivially by coupling via the process) we get the inequality
\begin{equation}
    \begin{aligned}
        f(x) \le f(x+\ep) \le f(\ep)e^{2dx},
    \end{aligned}
\end{equation}
and if we take $\ep \to 0$, since $f(\ep) \le \ep$, we get that $f(x) = 0$, which is a contradiction.
\end{proof}

\end{document}